\DeclareSymbolFont{sfoperators}{OT1}{ptm}{m}{n}
\DeclareSymbolFontAlphabet{\mathsf}{sfoperators}
\def\operator@font{\mathgroup\symsfoperators}
\numberwithin{equation}{section}
\newtheorem{thm}{Theorem}[section]
\newtheorem{defn}[thm]{Definition}
\newtheorem{lem}[thm]{Lemma}
\newtheorem{prop}[thm]{Proposition}
\newtheorem{cor}[thm]{Corollary}
\newtheorem{eg}[thm]{Example}
\newtheorem{assumption}[thm]{Assumption}
\theoremstyle{remark}
\newtheorem{remark}[thm]{Remark}
\newtheorem{rmk}[thm]{Remark}
\DeclareMathOperator{\id}{id}
\def\th@newremark{\th@remark\thm@headfont{\bfseries}}
\def\bdiamond{\mathop{\mathpalette\bdi@mond\relax}}
\newcommand\bdi@mond[2]{%
	\vcenter{\hbox{\m@th
			\scalebox{\ifx#1\displaystyle 2.6\else1.8\fi}{$#1\diamond$}%
	}}%
}
\def\bDiamond{\mathop{\mathpalette\bDi@mond\relax}}
\newcommand\bDi@mond[2]{%
	\vcenter{\hbox{\m@th
			\scalebox{\ifx#1\displaystyle 2.6\else1.2\fi}{$#1\Diamond$}%
	}}%
}
\definecolor{darkgreen}{rgb}{0.1,0.7,0.1}
\definecolor{darkred}{rgb}{0.7,0.1,0.1}
\definecolor{darkblue}{rgb}{0,0,0.7}
\newcommand{\aA}{\mathcal{A}}
\newcommand{\bB}{\mathcal{B}}
\newcommand{\cC}{\mathcal{C}}
\newcommand{\dD}{\mathcal{D}}
\newcommand{\iI}{\mathcal{I}}
\newcommand{\kK}{\mathcal{K}}
\newcommand{\lL}{\mathcal{L}}
\newcommand{\mM}{\mathcal{M}}
\newcommand{\nN}{\mathcal{N}}
\newcommand{\oO}{\mathcal{O}}
\newcommand{\pP}{\mathcal{P}}
\newcommand{\qQ}{\mathcal{Q}}
\newcommand{\rR}{\mathcal{R}}
\newcommand{\sS}{\mathcal{S}}
\newcommand{\xX}{\mathcal{X}}
\newcommand{\yY}{\mathcal{Y}}
\newcommand{\zZ}{\mathcal{Z}}
\newcommand{\cov}{{\operator@font cov}}
\newcommand{\var}{{\operator@font var}}
\newcommand{\corr}{{\operator@font corr}}
\newcommand{\supp}{{\operator@font supp}}
\newcommand{\diam}{{\operator@font diam}}
\newcommand{\Av}{{\operator@font Av}}
\newcommand{\trig}{{\operator@font trig}}
\newcommand{\Enh}{{\operator@font Enh}}
\newcommand{\EEnh}{\overline {\operator@font Enh}}
\newcommand{\Com}{{\operator@font Com}}
\newcommand{\lfl}{\left\lfloor }  
\newcommand{\rfl}{\right\rfloor} 
\newcommand{\E}{\mathbf{E}}
\newcommand{\N}{\mathbf{N}}
\newcommand{\R}{\mathbf{R}}
\newcommand{\T}{\mathbf{T}}
\newcommand{\Z}{\mathbf{Z}}
\renewcommand{\k}{\mathbf{k}}
\newcommand{\hxi}{\widehat{\xi}}
\newcommand{\eps}{\varepsilon}
\newcommand{\Ups}{\Upsilon}
\colorlet{symbols}{blue!90!black}
\colorlet{testcolor}{green!60!black}
\def\${|\!|\!|}
\def\drawx{\draw[-,solid] (-3pt,-3pt) -- (3pt,3pt);\draw[-,solid] (-3pt,3pt) -- (3pt,-3pt);}
\tikzset{
	root/.style={circle,fill=testcolor,inner sep=0pt, minimum size=2mm},
	dot/.style={circle,fill=black,inner sep=0pt, minimum size=1mm},
	edot/.style={circle,fill=black,inner sep=0pt, minimum size=1mm},
	odot/.style={circle,draw=black,inner sep=0pt, minimum size=1mm},
	var/.style={circle,fill=black!10,draw=black,inner sep=0pt, minimum size=
	2mm},
    svar/.style={circle,fill=black!10,draw=black,inner sep=0pt, minimum size=
	1.5mm},
    noise0/.style={rectangle,draw=symbols,fill=white,inner sep=0pt, minimum size=2mm},
    noise1/.style={circle,draw=symbols,fill=white,inner sep=0pt, minimum size=2mm},
    noise2/.style={circle,draw=symbols,fill=black!30!,inner sep=0pt, minimum size=2mm},
    noise3/.style={circle,draw=symbols,fill=symbols,inner sep=0pt, minimum size=2mm},
    noise0s/.style={rectangle,draw=symbols,fill=white,inner sep=0pt, minimum size=1.5mm},
    noise1s/.style={circle,draw=symbols,fill=white,inner sep=0pt, minimum size=1.5mm},
    noise2s/.style={circle,draw=symbols,fill=black!30!,inner sep=0pt, minimum size=1.5mm},
    noise3s/.style={circle,draw=symbols,fill=symbols,inner sep=0pt, minimum size=1.5mm},
	dotred/.style={circle,fill=symbols!50,inner sep=0pt, minimum size=2mm},
	generic/.style={semithick,shorten >=1pt,shorten <=1pt},
	ageneric/.style={semithick},
	dist/.style={ultra thick,draw=testcolor,shorten >=1pt,shorten <=1pt},
	testfcn/.style={ultra thick,testcolor,shorten >=1pt,shorten <=1pt,<-},
	testfcnx/.style={ultra thick,testcolor,shorten >=1pt,shorten <=1pt,<-,
		postaction={decorate,decoration={markings,mark=at position 0.6 with {\drawx}}}},
	kepsilon/.style={semithick,shorten >=1pt,shorten <=1pt,densely dashed,->},
	kprimex/.style={semithick,shorten >=1pt,shorten <=1pt,densely dashed,->,
		postaction={decorate,decoration={markings,mark=at position 0.4 with {\drawx}}}},
	kernel/.style={semithick,shorten >=1pt,shorten <=1pt,->},
	akernel/.style={semithick,->},
	multx/.style={shorten >=1pt,shorten <=1pt,
		postaction={decorate,decoration={markings,mark=at position 0.5 with {\drawx}}}},
	kernelx/.style={semithick,shorten >=1pt,shorten <=1pt,->,
		postaction={decorate,decoration={markings,mark=at position 0.4 with {\drawx}}}},
	kernel1/.style={->,semithick,shorten >=1pt,shorten <=1pt,postaction={decorate,decoration={markings,mark=at position 0.45 with {\draw[-] (0,-0.1) -- (0,0.1);}}}},
	kernel2/.style={->,semithick,shorten >=1pt,shorten <=1pt,postaction={decorate,decoration={markings,mark=at position 0.45 with {\draw[-] (0.05,-0.1) -- (0.05,0.1);\draw[-] (-0.05,-0.1) -- (-0.05,0.1);}}}},
	kernelBig/.style={semithick,shorten >=1pt,shorten <=1pt,decorate, decoration={zigzag,amplitude=1.5pt,segment length = 3pt,pre length=2pt,post length=2pt}},
	gepsilon/.style={dotted,semithick,shorten >=1pt,shorten <=1pt},
	renorm/.style={shape=circle,fill=white,inner sep=1pt},
	labl/.style={shape=rectangle,fill=white,inner sep=1pt},
	xi/.style={circle,fill=symbols!10,draw=symbols,inner sep=0pt,minimum size=1.2mm},
	xix/.style={crosscircle,fill=symbols!10,draw=symbols,inner sep=0pt,minimum size=1.2mm},
	xib/.style={circle,fill=symbols!10,draw=symbols,inner sep=0pt,minimum size=1.6mm},
	xibx/.style={crosscircle,fill=symbols!10,draw=symbols,inner sep=0pt,minimum size=1.6mm},
	not/.style={circle,fill=symbols,draw=symbols,inner sep=0pt,minimum size=0.5mm},
	>=stealth,
  	highlight/.style={line width=7pt,blue,draw opacity=0.2,line cap=round,line join=round},
  	cover/.style={line width=7pt,blue,line cap=round,line join=round},
	smalldot/.style={circle,fill=symbols,draw=symbols, solid,inner sep=0pt,minimum size=0.5mm},
	}
\def\DeclareSymbol#1#2#3{\expandafter\gdef\csname MH@symb@#1\endcsname{\tikz[baseline=#2,scale=0.15,draw=symbols]{#3}}\expandafter\gdef\csname MH@symb@#1s\endcsname{\scalebox{0.5}{\tikz[baseline=#2,scale=0.15,draw=symbols]{#3}}}}
\def\<#1>{\csname MH@symb@#1\endcsname}
\setlist[itemize]{topsep=3pt,itemsep=1.5pt,parsep=0pt}
\def\scal#1{\langle#1\rangle}
\def\cent#1{\mathopen{{\langle\kern-0.3em\rangle}}#1\mathclose{{\langle\kern-0.3em\rangle}}}
\def\d{\partial}
\begin{document}

\title{Weak universality of $\Phi^4_3$: polynomial potential and general smoothing mechanism}
\author{Dirk Erhard$^1$ and Weijun Xu$^2$}
\institute{Universidade Federal da Bahia, Brazil.\; \email{erharddirk@gmail.com}
	\and University of Oxford, UK.\; \email{weijunx@gmail.com}}

\maketitle

\begin{abstract}
	We consider a class of stochastic reaction-diffusion equations on the three dimensional torus. The non-linearities are odd polynomials in the weakly non-linear regime, and the smoothing mechanisms are very general higher order perturbations of the Laplacian. The randomness is the space-time white noise without regularisation. We show that these processes converge to the dynamical $\Phi^4_3 (\lambda)$ model, where the coupling constant $\lambda$ has an explicit expression involving non-trivial interactions between all details of the smoothing mechanism and the non-linearity. 
\end{abstract}

\setcounter{tocdepth}{2}
\microtypesetup{protrusion=false}
\tableofcontents
\microtypesetup{protrusion=true}

\def\k{\mathbf{k}}

\section{Introduction}
\label{sec:intro}

\subsection{Statement of the main result}

The dynamical $\Phi^4_3(\lambda)$ model ($\lambda>0$) is the equation formally given by
\begin{equation} \label{eq:phi43}
\d_t \phi = \Delta \phi - \lambda \phi^3 + \xi\;, 
\end{equation}
where $\xi$ is the space-time white noise on the three dimensional torus $\T^3 = (\R / \Z)^{3}$. 
There are two principle reasons that sparked interest in the equation. Firstly, the formal equilibrium measure of the dynamics~\eqref{eq:phi43} is the measure on Schwartz distributions associated to Bosonic Euclidean quantum field theory. The construction of this measure was one of the main achievements of constructive field
theory in the seventies; see for instance the articles~\cite{EO,Feldman,FO,GJ,Glimm} and references therein.

Secondly, the solution to~\eqref{eq:phi43} is expected to describe the 3D Ising model with Glauber dynamics and Kac interactions near critical temperature (see~\cite{GLP99}). The one dimensional version of this result was shown in~\cite{BPRS93}. The two dimensional situation requires a renormalisation to the equation. It was shown in~\cite{Ising2d} that the 2D dynamical Kac-Ising model does rescale to~\eqref{eq:phi43} in $2$ dimensions, and the renormalisation constant has a beautiful interpretation as a shift of the temperature from its mean field value. The 3D case is expected to be much more involved. 

The problem with~\eqref{eq:phi43} for $d\geq 2$ is that the equation is not well posed. Indeed, the roughness of the noise $\xi$ forces the solutions to~\eqref{eq:phi43} to be distrubitions rather than functions. Therefore, the cubic term lacks any interpretation. The $2$D case was resolved by Da Prato and Debussche~\cite{DPD03} using a first order expansion around the solution to the corresponding linear equation. This type of ``global expansion'' breaks down for dimension three, and the problem stayed open until Hairer in his breakthrough paper~\cite{rs_theory} developed the theory of regularity structures allowing expanding the solution around each space-time point systematically. The local well-posedness of the $3$D problem comes as an application. The theory has now been developed into a blackbox machinary that allows to solve essentially all subcritical SPDEs automatically (\cite{rs_algebraic, rs_analytic, rs_equation}), including the ``$4-\delta$'' dimensional case. The theory of para-controlled distributions developed in \cite{para_control_theory} also allows to tackle a large class of singular SPDEs, and the well-posedness of \eqref{eq:phi43} in $3$D was also shown in~\cite{phi43_CC} using paracontrolled distributions. Kupiainen also developed renormalisation group arguments in~\cite{phi43_Antti}. The local well-posedness of~\eqref{eq:phi43} can be loosely stated as follows. 

\begin{thm}\label{thm:standard}
	Given a smooth approximation $\xi_\eps$ of the space-time white noise $\xi$, there exists a sequence of constants $C_\eps \rightarrow +\infty$ as $\eps\to 0$ such that the solution $\phi_\eps$ to the regularised equation
	\begin{equation*}
	\d_t \phi_\eps = \Delta \phi_\eps - \lambda \phi_\eps^3 + \xi_\eps + C_\eps \phi_\eps
	\end{equation*}
	converges in probability as $\eps \rightarrow +\infty$. $C_\eps$ has the form $C_\eps = \frac{c_1}{\eps} + c_2 \log \eps +\oO(1)$. The family of limits parametrised by the $\oO(1)$ quantity is the family of solutions to $\Phi^4_3(\lambda)$. 
\end{thm}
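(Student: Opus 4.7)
The plan is to follow the regularity structures approach of \cite{rs_theory}. First I would build a regularity structure $(\mathscr{T},\mathscr{G})$ by starting from a single noise symbol $\Xi$ of homogeneity $|\Xi|=-\tfrac{5}{2}-\kappa$ (for $\kappa>0$ small) and closing under abstract multiplication and under the abstract integration operator $\mathcal{I}$ representing convolution with the heat kernel (which increases homogeneity by $2$). Truncating at a regularity just above $0$, the set of negative-homogeneity symbols reachable from the equation $\phi=\mathcal{P}*(\xi-\lambda\phi^3)$ is finite, and there are exactly two classes of divergent symbols: those isomorphic to $\mathcal{I}(\Xi)^2$ (producing the $1/\eps$ divergence) and a specific cherry-tree with three copies of the noise (producing the $\log\eps$ divergence).

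From the mollified noise $\xi_\eps$ one obtains a canonical admissible model $(\Pi^{(\eps)},\Gamma^{(\eps)})$ on this structure. The divergences are removed by a BPHZ-type renormalization: one acts on the model by a character $g_\eps$ of the renormalization group whose only non-zero components are the two constants $c_1(\eps)=\mathbf{E}\bigl[\Pi^{(\eps)}\mathcal{I}(\Xi)^2\bigr](0)$ and a corresponding constant $c_2(\eps)$ attached to the cherry-tree. Next I would recast the PDE as a fixed-point problem $\Phi=\mathcal{I}(\Xi-\lambda\Phi^3)+P\phi_0$ in a space of modelled distributions $\mathcal{D}^{\gamma,\eta}$, with $\gamma$ slightly above $|\mathcal{I}(\Xi)|^{3}\cdot 2$, solved by a contraction argument on a short time interval. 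Applying the reconstruction operator $\mathcal{R}$ to the fixed point produces a function $\phi_\eps:=\mathcal{R}\Phi$, and a direct computation of how $g_\eps$ acts on the reconstructed equation shows that $\phi_\eps$ solves $\partial_t\phi_\eps=\Delta\phi_\eps-\lambda\phi_\eps^3+\xi_\eps+C_\eps\phi_\eps$ with $C_\eps$ an affine function of $c_1(\eps),c_2(\eps)$ of precisely the form stated in the theorem.

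The main obstacle, and the only genuinely analytic step, is to show that the renormalized models $\hat\Pi^{(\eps)}$ form a Cauchy sequence in probability (in the appropriate weighted metric on admissible models) as $\eps\downarrow 0$. By a Kolmogorov-type criterion this reduces to moment bounds on finitely many random variables $\hat\Pi^{(\eps)}_x\tau$ for $\tau$ of negative homogeneity; one expands each in Wiener chaos and bounds the resulting Feynman-type graph integrals. The leading divergences are cancelled precisely by the BPHZ subtractions introduced above, and the remaining graph integrals converge thanks to the subcriticality of the equation in dimension $3$. Once convergence of the models is established, continuous dependence of the modelled-distribution fixed point on the driving model, together with continuity of $\mathcal{R}$, transfers this convergence to the level of $\phi_\eps$; the $\mathcal{O}(1)$ freedom in $C_\eps$ corresponds to an additional one-parameter subgroup of the renormalization group and parametrizes the family of limits identified with the solutions of $\Phi^4_3(\lambda)$.
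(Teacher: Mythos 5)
Your sketch is an outline of the regularity-structures proof, and that is indeed the route the paper itself points to rather than reproves: Theorem~\ref{thm:standard} is stated ``loosely'' as a known result with \cite{rs_theory} (and \cite{phi43_CC}, \cite{phi43_Antti}) cited for it. Where the paper actually needs a concrete handle on the limiting family — Appendix~\ref{app:pde}, Theorem~\ref{th:fixed_pt_standard} — it works instead in the paracontrolled framework of \cite{phi43_CC, Phi43global}: a Da Prato--Debussche-type decomposition $\phi = \<1> - \lambda\<30> + v + w$, the finite list of stochastic objects of Table~\ref{table:noise}, and a Besov/paraproduct fixed point for the pair $(v,w)$ in $\yY_T$. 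This choice is deliberate, since the whole paper manipulates $\lL_\eps$ in Fourier space and the $(v,w)$ system is exactly what the $\eps>0$ equations are compared against in Theorem~\ref{th:fixed_pt_convergence}. So the two routes buy different things: yours gives the general black-box statement with BPHZ renormalisation and the clean parametrisation of the limit family by the renormalisation group, the paper's gives an explicit description of one limit point that plugs directly into its stability argument.

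Two inaccuracies in your sketch are worth fixing. First, the $\log\eps$ divergence is not carried by a ``cherry tree with three copies of the noise'': any symbol with an odd number of noise leaves sits in odd Wiener chaos and has vanishing expectation, hence contributes no renormalisation constant. The logarithmic constant is the sunset-type constant $c_\eps^{(2)} = \E\big[\iI(\widetilde{\<2>}_\eps)\circ\widetilde{\<2>}_\eps\big]$ of \eqref{e.def.ccn}, attached to four-noise symbols such as those underlying $\<22>$ and $\<32>$; the BPHZ character you invoke would of course produce the correct constants automatically, but as stated your identification of the divergent symbols is wrong. Second, your choice ``$\gamma$ slightly above $|\mathcal{I}(\Xi)|^{3}\cdot 2$'' does not parse as a homogeneity count; what is needed is a $\gamma$ (and weight $\eta$) chosen so that the cubic map and the integration operator close in $\mathcal{D}^{\gamma,\eta}$, which is a small but not cosmetic bookkeeping step in the fixed-point argument. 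Neither point invalidates the strategy, which is the standard one, but both would need to be stated correctly in a complete proof.
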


We will give a characterisation of one particular element in the limiting family in Appendix~\ref{app:pde}. In the case $\lambda>0$, global well-posedness and quantitative bounds on large scale behaviour of the solution for \eqref{eq:phi43} and more singular equations of the same form have been established in \cite{Phi43global, Phi43_local_bounds, Phi4_sub_local_bounds}. 

In this article, we study the $\eps \rightarrow 0$ limit of the family of processes $\Phi_\eps$ satisfying the equation
\begin{equation} \label{eq:main_eq_intro}
\d_t \Phi_\eps = \lL_\eps \Phi_\eps - \eps^{-\frac{3}{2}} V'(\sqrt{\eps} \Phi_\eps) + \xi + C_\eps \Phi_\eps\;, \quad (t,x) \in \R^{+} \times \T^3\;, 
\end{equation}
where $\xi$ is the space-time white noise on $\R \times \T^3$, $V$ is an even polynomial of degree at least $4$, and the differential operator is of the form $\lL_\eps = - \eps^{-2} \qQ(i \eps \nabla)$ in the sense that its Fourier transform is given by $\widehat{\lL_\eps}(k) = - \eps^{-2} \qQ(2 \pi \eps k)$. Here $\qQ$ is a function satisfying Assumption~\ref{as:Q} below. Finally, $C_\eps$ is a constant depending on $\eps$ and its precise form will be determined in the sequel.

\begin{assumption} \label{as:Q}
	$\qQ$ is radially symmetric, and its radial version (also denoted by $\qQ$) $\qQ: \R^{+} \rightarrow \R$ has five continuous derivatives and satisfies the following: 
	\begin{enumerate}
		\item $Q(0)=0 $ and $\frac{1}{2} \qQ''(0) = 1$.  In particular, for every $\Lambda >0$ there exists $C>0$ such that
		\begin{equation} \label{eq:Q_origin}
		|\qQ(z) - z^2| \leq C z^{4}\;, \qquad \forall z \in [0,\Lambda]\,.
		\end{equation}
		
		\item $\qQ(z) > 0$ for all $z > 0$. 
		
		\item There exists $c>0$ and $\eta>0$ such that
		\begin{equation} \label{eq:Q_growth}
		\qQ(z) > c z^{3+\eta}\;, \qquad \forall z \geq 1\;.
		\end{equation}
		
		\item For every $\delta \in (0,1)$, there exists $C_\delta>0$ such that
		\begin{equation} \label{eq:Q_derivative_growth}
		\max_{0 \leq n \leq 5} |z^n \qQ^{(n)}(z)| \leq C_\delta |\qQ(z)|^{1+\delta}\;, \quad \forall z \geq 1\,.
		\end{equation}
	\end{enumerate}
\end{assumption}

Note that the first three conditions together imply $\qQ(z) \geq c z^{3+\eta}$ for all $z$. Our main theorem is the following. 

\begin{thm} \label{th:main}
	Suppose $\qQ$ satisfies Assumption~\ref{as:Q}, and $V$ is an even polynomial of degree $2n$ with $n \geq 2$, and let $\Phi_\eps$ be the solution to \eqref{eq:main_eq_intro}. Let
	\begin{equation} \label{eq:variance_whole_space}
	\sigma^2 := \frac{1}{2} \int_{\R^3} \frac{1}{\qQ(2 \pi |\theta|)} {\rm d} \theta\;. 
	\end{equation}
	Suppose also that the initial conditions $\{\Phi_\eps(0,\cdot)\}$ converges as $\eps \rightarrow 0$ in a suitable space to some $\Phi(0,\cdot)$\footnote{See Remarks~\ref{rmk:notion_convergence} and~\ref{rmk:initial} for precise descriptions and detailed discussions.}. Then, there exist $c_1$, $c_2$ depending on $\qQ$ as well as on $V$ such that for $C_\eps = \frac{c_1}{\eps} + c_2 \log \eps + \oO(1)$, the processes $\{\Phi_\eps\}_{\eps >0}$ converge as $\eps \to 0$ to the $\Phi^4_3(\lambda)$ family of solutions with initial data $\Phi(0,\cdot)$ with
	\begin{equation} \label{eq:lambda_value}
	\lambda = \frac{1}{6} (V^{(4)} * \mu)(0)\;, 
	\end{equation}
	where $\mu \sim \nN(0,\sigma^2)$ with $\sigma^2$ given as above. 
\end{thm}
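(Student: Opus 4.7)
The plan is to apply the theory of regularity structures \cite{rs_theory, rs_analytic, rs_algebraic, rs_equation} with $\lL_\eps$ playing the role of the Laplacian, identifying the effective cubic coupling through a Hermite expansion of $V'$ at the variance of the suitably rescaled linear solution.

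\emph{Step 1 (Hermite expansion).} Let $\Psi_\eps$ be the stationary solution of the linear equation $\d_t \Psi_\eps = \lL_\eps \Psi_\eps + \xi$. A Fourier-side computation, with the change of variable $\theta = \eps k$, shows that $\var(\Psi_\eps) \sim \sigma^2/\eps$ with $\sigma^2$ as in \eqref{eq:variance_whole_space}, so that $\sqrt{\eps}\,\Psi_\eps$ has limiting variance $\sigma^2$. This is the natural scale at which to Hermite-expand $V'$: setting $\mu = \nN(0, \sigma^2)$, write $V'(y) = \sum_{k \geq 0} \lambda_k H_{2k+1}(y; \sigma^2)$, where Gaussian integration by parts gives $\lambda_k = \tfrac{1}{(2k+1)!}(V^{(2k+2)} \ast \mu)(0)$, so $\lambda_1$ equals the $\lambda$ of \eqref{eq:lambda_value}. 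Using the scaling identity $H_m(\sqrt{\eps}\,x; \sigma^2) = \eps^{m/2} H_m(x; \sigma^2/\eps)$ and identifying $\sigma^2/\eps$ with $\var(\Psi_\eps)$ to leading order, the nonlinearity rewrites as
\begin{equation*}
\eps^{-3/2} V'(\sqrt{\eps}\,\Phi_\eps) \;=\; \lambda_0 \eps^{-1} \Phi_\eps \;+\; \lambda \Wick{\Phi_\eps^3} \;+\; \sum_{k \geq 2} \lambda_k \eps^{k-1} \Wick{\Phi_\eps^{2k+1}},
\end{equation*}
where Wick ordering is taken with respect to the variance of $\Psi_\eps$. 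The $k = 0$ linear term is absorbed into $C_\eps$, the $k = 1$ term is the desired cubic, and the $k \geq 2$ terms come with an explicit $\eps^{k-1}$ prefactor.

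\emph{Step 2 (Regularity structure and BPHZ model).} I build a regularity structure containing the usual $\Phi^4_3$ trees, augmented by symbols generated by $\Wick{\Phi^{2k+1}}$ for $k \geq 2$ and their resolutions against the kernel of $\lL_\eps^{-1}$. Assumption~\ref{as:Q} furnishes the analytic input: by \eqref{eq:Q_origin} the low-frequency part of $\lL_\eps^{-1}$ matches the heat kernel of $\Delta$, while \eqref{eq:Q_growth}--\eqref{eq:Q_derivative_growth} yield extra smoothing at high frequencies, together giving a singular-plus-smooth kernel decomposition with uniform-in-$\eps$ bounds. The BPHZ-renormalised model $\hPi^\eps$ is then defined in the standard way; counterterms from the Wick renormalisation of $\Phi^3$ and the further $\Phi^4_3$ subdivergences produce the basic $c_1/\eps + c_2 \log \eps$ structure of $C_\eps$, while additional subdivergences from the trees involving the higher Wick powers contribute extra corrections to $c_1$ and $c_2$ that depend on all the coefficients of $V$.

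\emph{Step 3 (Convergence of models and conclusion).} The core of the argument is to prove that $\hPi^\eps$ converges to the standard BPHZ model of $\Phi^4_3$. Uniform covariance bounds on each tree are obtained by explicit Fourier computations that interpolate between the heat-kernel regime at frequencies $|k| \lesssim \eps^{-1}$ and the $\qQ$-dominated regime at $|k| \gtrsim \eps^{-1}$. Crucially, the trees generated by $\Wick{\Phi^{2k+1}}$ for $k \geq 2$ vanish in the limit: after absorbing their subdivergences into $C_\eps$, their stochastic $L^2$-bounds in the $(2k+1)$-th Wiener chaos remain of order one, while being multiplied by $\eps^{k-1} \to 0$. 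Continuity of the abstract fixed-point problem, combined with the assumed convergence of the initial data, then gives the convergence of $\Phi_\eps$ to the member of the $\Phi^4_3(\lambda)$ family determined by the chosen $\oO(1)$ part of $C_\eps$ and the initial data $\Phi(0,\cdot)$, via Theorem~\ref{thm:standard}.

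\emph{Main obstacle.} The principal difficulty is the uniform-in-$\eps$ stochastic analysis underlying the convergence of $\hPi^\eps$. One has to trace how the crossover of $\lL_\eps^{-1}$ at scale $\eps^{-1}$ propagates through every tree, and verify by careful combinatorial bookkeeping that the BPHZ counterterms arising from the trees generated by the higher Wick powers contribute only to $C_\eps$, without producing any extra term proportional to $\Wick{\Phi^3}$ in the limit. It is precisely this point that pins down the effective cubic coupling at $\lambda = \tfrac{1}{6}(V^{(4)} \ast \mu)(0)$, matching the Hermite identification from Step 1.
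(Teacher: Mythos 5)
Your route is genuinely different from the paper's: you propose to run the regularity-structures/BPHZ machinery with $\lL_\eps$ in place of the Laplacian, whereas the paper deliberately stays in Fourier space and adapts the paracontrolled framework of \cite{para_control_theory,phi43_CC}, splitting the proof into a deterministic fixed-point/stability statement (Theorem~\ref{th:fixed_pt_convergence}) and explicit chaos computations for the stochastic objects (Theorem~\ref{th:main_conv_stochastic_obj}). Your Step 1 is sound and matches the paper's mechanism for identifying $\lambda$: the Hermite expansion of $V'$ at variance $\sigma^2/\eps\approx\var(\Psi_\eps)$ is exactly Proposition~\ref{pr:chaos_expan_processes} combined with Proposition~\ref{pr:ff_correlation}, and your $\lambda_1$ coincides with \eqref{eq:lambda_value}.

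However, as a proof the analytic core is asserted rather than established, and this is where the real difficulty of the theorem lies. First, your Step 2 claims that Assumption~\ref{as:Q} yields a uniform-in-$\eps$ singular-plus-smooth real-space decomposition of the kernel of $(\d_t-\lL_\eps)^{-1}$ satisfying the hypotheses of the model-convergence theorems; nothing in Assumption~\ref{as:Q} gives pointwise space-time kernel bounds for free (the paper's own Besov-multiplier estimates, Lemma~\ref{le:heat_regularisation}, already lose a $\delta$ at $t=0$, and the paper explicitly flags the real-space kernel bounds as an open, nontrivial alternative). Moreover, since the noise is \emph{not} mollified and the $\eps$-dependence sits in the propagator, the standard BPHZ convergence blackbox does not apply directly; one needs a general/discrete framework and must verify its kernel hypotheses uniformly in $\eps$, which is precisely the work you defer. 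Second, your Step 3 argument that the trees generated by $\Wick{\Phi^{2k+1}}$, $k\ge 2$, die because "order-one stochastic bounds are multiplied by $\eps^{k-1}$" is too quick: the Wick powers themselves diverge as $\eps\to 0$ (the variance of $\<1>_\eps$ is $\sim\sigma^2/\eps$), so whether the $\eps^{k-1}$ prefactor wins is exactly the delicate bookkeeping of the paper's Section~\ref{sec:main_convergence} (distributing powers of $\scal{\ell_j}_\eps$ via Lemma~\ref{le:estimatekeps}); for the borderline object $\<3'2'>_\eps$ the paper only obtains qualitative convergence, without a rate, and stresses that it must avoid using \eqref{eq:Q_derivative_growth} there. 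You also do not verify that the renormalised resonance-type trees converge to the \emph{standard} $\Phi^4_3$ objects (so that the $\oO(1)$ ambiguity lands in the right family), which again requires the explicit covariance comparisons the paper carries out. So the identification of $\lambda$ is correct, but the uniform-in-$\eps$ kernel/model analysis — the heart of the theorem — is missing.
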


\begin{rmk} \label{rmk:notion_convergence}
	The precise notion of convergence is as follows. The solutions $\Phi_\eps$ to \eqref{eq:main_eq_intro} and $\Phi$ to \eqref{eq:phi43} can be decomposed as
	\begin{equation} \label{eq:sol_decomp}
	\Phi_\eps = \<1>_\eps - \lambda \<3'0>_\eps + v_\eps + w_\eps\;, \qquad \Phi = \<1> - \lambda \<30> + v + w\;,
	\end{equation}
	where $\<1>_\eps$ and $\<1>$ are the stationary solutions to the linearised equations in \eqref{eq:free_field}, and $\<3'0>_\eps$ and $\<30>$ are stochastic objects specified in Section~\ref{sec:stochastic_objects}. $(v_\eps, w_\eps)$ and $(v,w)$ are remainders that satisfy the systems \eqref{eq:fixed_pt_eps} and \eqref{eq:fixed_pt_0}. Here we have replaced $\lL_\eps$ and $\Delta$ by $\lL_\eps-1$ and $\Delta-1$. The precise statement of Theorem~\ref{th:main} is that if there exists a (possibly random) distribution $\Phi(0,\cdot)$ such that the initial data $\Phi_\eps(0,\cdot) - \<1>_\eps(0,\cdot)$ converges to $\Phi(0,\cdot) - \<1>(0,\cdot)$ in $\bB^\kappa$ in probability, where $\bB^\kappa$ is the Besov space further specified in the appendix, then there is a random time $T$ such that $\|(v_\eps, w_\eps) - (v,w)\|_{\yY_{T,\eps}} \rightarrow 0$ in probability where $\yY_{T,\eps}$ is given as in Definition~\ref{de:space_solution_remainder}. Together with the convergence of $\<1>_\eps$ to $\<1>$ and $\<3'0>_\eps$ to $\<30>$ this justifies our notion of convergence\footnote{$\Phi_\eps$ converges to $\Phi$ means that each component in~\eqref{eq:sol_decomp} converges in their corresponding space}. Furthermore, if $\lambda$ in \eqref{eq:lambda_value} is positive, then the time $T$ can be taken to be any (deterministic) positive number. 
\end{rmk}

\begin{proof} [Proof of Theorem~\ref{th:main}]
	We give a quick proof of the main theorem assuming Theorems~\ref{th:fixed_pt_convergence} and~\ref{th:main_conv_stochastic_obj}. According to the decomposition \eqref{eq:sol_decomp} and the derivation in Section~\ref{sec:derivation_pde}, $(v_\eps, w_\eps)$ and $(v,w)$ satisfy the PDE systems \eqref{eq:fixed_pt_eps} and \eqref{eq:fixed_pt_0} with initial data
	\begin{equation*}
	\begin{split}
	v_\eps(0,\cdot) + w_\eps(0,\cdot) &= \Phi_\eps(0,\cdot) - \<1>_\eps(0,\cdot) + \lambda \<3'0>_\eps(0,\cdot)\;,\\
	v(0,\cdot) + w(0,\cdot) &= \Phi(0,\cdot) - \<1>(0,\cdot) + \lambda \<30>(0,\cdot)\;.
	\end{split}
	\end{equation*}
	By Theorem~\ref{th:main_conv_stochastic_obj}, arbitrarily high moments of $\<3'0>_\eps - \<30>$ converge in $\bB^{\frac{1}{2}-\kappa}$. Combined with the assumption on the convergence of the initial condition, we deduce that the sum $v_\eps(0,\cdot)+ w_\eps(0,\cdot)$ converges to $v(0,\cdot)+ w(0,\cdot)$ in $\bB^\kappa$. Hence, we can allocate the initial data of $(v_\eps, w_\eps)$ and $(v, w)$ such that the pair $(v_\eps(0,\cdot), w_\eps(0,\cdot))$ 
	converges in $\bB^{\kappa} \times \bB^{\kappa}$. Again by the convergence of the stochastic objects in Theorem~\ref{th:main_conv_stochastic_obj}, we see that the assumptions of Theorem~\ref{th:fixed_pt_convergence} are satisfied. Hence, $\|(v_\eps, w_\eps) - (v,w)\|_{\yY_T} \rightarrow 0$, which in turn implies the convergence of $\Phi_\eps$ to $\Phi$ in the sense of Remark~\ref{rmk:notion_convergence}. 
\end{proof}

\begin{rmk}
	The expressions \eqref{eq:variance_whole_space} and \eqref{eq:lambda_value} suggest that all details of $\qQ$ (and hence $\lL_\eps$) contribute to the limiting coupling constant $\lambda$. A typical function $\qQ$ satisfying Assumption~\ref{as:Q} is the even polynomial
	\begin{equation*}
	\qQ(z) = \sum_{j=1}^{q} \nu_j z^{2j}
	\end{equation*}
	with $q \geq 2$, $\nu_1 = 1$ and such that $\qQ(z)>0$ for all $z \neq 0$. In this case, the differential operator $\lL_\eps$ takes the form
	\begin{equation*}
	\lL_\eps = - \frac{1}{\eps^2} \qQ(i \eps \nabla) = \sum_{j=1}^{q} (-1)^{j-1} \nu_j \eps^{2(j-1)} \Delta^{j}. 
	\end{equation*}
	We see that all higher powers of the Laplacian vanish as $\eps \rightarrow 0$, but their coefficients $\nu_j$ still contribute to the coupling constant $\lambda$ of the limiting equation (even though the smoothing operator of the limiting equation is just the Laplacian with coefficient $\nu_1 = 1$). The same is true for $V'$, where all its coefficients also contribute to $\lambda$. 
\end{rmk}

\begin{eg}
	In the simplest nontrivial case where $\lL_\eps = \Delta - \nu \eps^2 \Delta^2$ for some $\nu > 0$, and $V$ consists of the sixth power only, the equation for $\Phi_\eps$ is
	\begin{equation*}
	\d_t \Phi_\eps = (\Delta - \nu \eps^2 \Delta^2) \Phi_\eps - a \eps \Phi_\eps^5 + \xi + C_\eps \Phi_\eps\;, \qquad \text{on}\; \; \R^+ \times \T^3\;. 
	\end{equation*}
	In this situation, with the proper choice of $C_\eps$, we have that $\Phi_\eps$ converges to $\Phi^4_3(\lambda)$ with
	\begin{equation*}
	\lambda = \frac{5a}{4 \pi^2} \int_{\R^3} \frac{1}{|\theta|^2 (1 + 4 \pi^2 \nu |\theta|^2)} {\rm d} \theta. 
	\end{equation*}
	We see that $\lambda$ depends on both $\nu$ and $a$ (even though $V'$ itself does not have a cubic term), and that $\lambda$ would not be defined if $\nu \leq 0$. 
\end{eg}

\begin{rmk}
	We now comment on the assumptions on $\qQ$. 
	\begin{enumerate}
		\item The first assumption ensures that $\lL_\eps$ approximates the Laplacian at low frequencies.  
		
		\item The positivity assumption says that $\lL_\eps$ should be ``smoothing'' at all scales. It is almost necessary in the sense that if $\qQ(z_0) < 0$ for some $z_0 \neq 0$, then even the deterministic linear evolution $e^{t \lL_\eps} f$ does not converge to $e^{t\Delta} f$ in $L^2$ unless one imposes very restrictive assumptions on the decay of $\widehat{f}$. 
		
		\item Since we consider the equation with the space-time white noise (not its regularised version), $\lL_\eps$ should have a sufficient smoothing effect in order for $\Phi_\eps$ to make sense even for fixed $\eps$. This requires $\qQ$ to grow sufficiently fast at infinity so that $\frac{1}{\qQ}$ is integrable (as can be seen from the expression \eqref{eq:variance_whole_space}). 
		
		On the other hand, if in \eqref{eq:main_eq_intro}, the noise $\xi$ is replaced by its Fourier truncated version $\xi_\eps$ as $\widehat{\xi_\eps}(t,k) := \rho(\eps |k|) \; \widehat{\xi}(t,k)$ with some nice cutoff function $\rho$, then Theorem~\ref{th:main} holds with $\lambda$ in the same form as \eqref{eq:lambda_value}, but the variance $\sigma^2$ of the Gaussian is given by
		\begin{equation*}
		\sigma^2 = \frac{1}{2} \int_{\R^3} \frac{\rho^2 (|\theta|)}{\qQ(2 \pi |\theta|)} {\rm d} \theta\;. 
		\end{equation*}
		In this case with the presence of the approximation to the noise, the growth assumption on $\qQ$ can be removed since $\rho^2$ is integrable at large scales. 
		
		\item Finally, the assumption~\ref{eq:Q_derivative_growth} ensures that the perturbed heat kernel $e^{t \lL_\eps}$ has the right smoothing properties with a $\delta$-loss at $t=0$ (see Lemma~\ref{le:heat_regularisation}). We do not know if this $\delta$-loss indeed happens or can be removed by improving our estimates.
		Note that even though one requires \eqref{eq:Q_derivative_growth} to hold for every $\delta>0$, this already includes a large class of functions with no restriction on their growth. What this condition prevents is the situation that the derivative oscillates much faster than the function grows. On the other hand, if we require \eqref{eq:Q_derivative_growth} to hold for $\delta=0$ with a finite proportionality constant, then this would restrict to functions bounded by polynomial growth. 
		We finally note that this condition is only used in the PDE part of the proof, and is not needed for the convergence of the stochastic objects.
	\end{enumerate}
\end{rmk}

\subsection{Motivation and related works}

\subsubsection{Hairer-Quastel universality}

Our work is not the first one in which the non-linearity of a singular SPDEs is generalised.
To the best of our knowledge, motivated by the weak universality conjecture of the KPZ equation, the first work in that context is by Hairer and Quastel~\cite{HQ}, who considered an approximation to the KPZ equation of the form
\begin{equation}\label{eq:HQ}
\partial_t h_\eps = \partial_x^2 h_\eps +\eps^{-1}F(\sqrt{\eps}\partial_x h) +\xi_\eps - C_\eps
\end{equation}
for some even polynomial $F$. They observed that as $\eps$ tends to zero one recovers the usual KPZ equation. Yet, the crux is that powers larger than two of $F$ do not simply vanish but they produce additional quadratic non-linearities. They therefore alter the coupling constant in front of the non-linearity which is a mean to regulate the strength of the asymmetry inherent to the equation. The Hairer-Quastel result has been extended to non-Gaussian noise (\cite{KPZCLT}) and general non-linearities (\cite{KPZ_general}). See also \cite{HQ_stationary} for similar results when the processes are at stationarity. 

As for the $\Phi_3^4$-equation, a similar universality result has been established in~\cite{Phi4_poly} where the approximations~\eqref{eq:main_eq_intro} with $\lL_\eps = \Delta$ and mollified noise $\xi_\eps$ was studied. It has been again observed that the non-linearity collapses into a cubic non-linearity with an altered coupling constant $\lambda$. These results have been extended to more general non-linearities and noises (\cite{Phi4_non_Gaussian, Phi4_general, Phi4_general_whole}), and the same phenomena was observed. 

It was therefore a curiosity to investigate the effect of higher smoothing mechanisms. Do they simply vanish as $\eps$ converges to zero, do they alter the strength of the final smoothing by producing a constant in front of the Laplacian, or is the effect different? Indeed, it was already expected in \cite[Section~1.3]{HQ} that these smoothing mechanisms should contribute to the limiting equation, and such effects was shown in \cite{Singular_perturb_SHE} in a simpler situation. Theorem~\ref{th:main} shows in the current case that the coupling constant is a function of $\qQ$ and therefore confirms that the last guess is the correct one, i.e., higher powers of the Laplacian produce additional non-linearities.

\subsubsection{Derivation from microscopic phase coexistence model}

Consider the microscopic process $\phi^{(\eps)}$ defined by
\begin{equation*}
\d_t \phi^{(\eps)} = \lL^{(\eps)} \phi^{(\eps)} - \eps V_{\theta}'(\phi^{(\eps)}) + \xi^{(\eps)}\;, \quad \text{on} \; \R^{+} \times [0,1/\eps]^{3}\;
\end{equation*}
with periodic boundary condition. Here, $\xi^{(\eps)}$ is the space-time white noise on $(\T/\eps)^{3}$, $\{V_{\theta}\}$ is a family of even polynomial potentials parametrised by $\theta$, and $\lL^{(\eps)} = -\qQ(i \nabla)$ is a differential operator on functions on $(\T/\eps)^{3}$ such that $\widehat{\lL^{(\eps)}}(\eta) = - \qQ(2 \pi \eta)$ for $\eta \in (\eps \Z)^{3}$. $V_\theta$ can be viewed as a perturbation of $V=V_0$ by a small parameter $\theta$. 

Consider the macroscopic process $\Phi_\eps$ defined by
\begin{equation*}
\Phi_{\eps}(t,x) := \eps^{-\frac{1}{2}} \phi^{(\eps)}(t/\eps^2, x/\eps)\;. 
\end{equation*}
Then $\Phi_\eps$ satisfies the equation
\begin{equation*}
\d_t \Phi_\eps = \lL_\eps \Phi_\eps - \eps^{-\frac{3}{2}} V_{\theta}'(\sqrt{\eps} \Phi_\eps) + \xi\;, \quad (t,x) \in \R^{+} \times \T^3
\end{equation*}
where $\lL_\eps = - \eps^{-2} \qQ(i \eps \nabla)$ and $\xi(t,x) = \eps^{-\frac{5}{2}} \xi^{(\eps)}(t/\eps^{2}, x/\eps)$ is the space-time white noise on $\T^3$. 

Let $\mu$ be the centered Gaussian measure on $\R$ with variance $\sigma^2$ given in \eqref{eq:variance_whole_space}. Define the averaged potential $\scal{V_\theta}$ to be
\begin{equation*}
\scal{V_\theta}(x) := \int_{\R} V_\theta (x+y) {\rm d}y\;.
\end{equation*}
Now, if $\scal{V}: (\theta, u) \mapsto V_{\theta}(u)$ satisfies the pitchfork bifurcation at the origin in the sense that
\begin{equation*}
\frac{\d^4 \scal{V}}{\d u^4}(0,0) > 0\;, \quad \frac{\d^2 \scal{V}}{\d u^2} = 0\;, \quad \frac{\d^3 \scal{V}}{\d \theta \d x^2}(0,0) < 0\;,
\end{equation*}
then the choice of $c_1$ in Theorem~\ref{th:main} is necessarily $0$, and for small $\theta$, the non-linearity behaves like
\begin{equation*}
- \eps^{-\frac{3}{2}} V_\theta'(\sqrt{\eps} \Phi_\eps) \approx - \eps^{-\frac{3}{2}} V'(\sqrt{\eps} \Phi_\eps) + \frac{c \theta}{|\log \eps|} \Phi_\eps\;.
\end{equation*}
With proper choice of $\theta(\eps) = \oO(\eps \log \eps)$, we are then in the form of \eqref{eq:main_eq_intro}. This says that when $\{V_\theta\}$ is "critical" in that it satisfies the pitchfork bifurcation, then the equation \eqref{eq:main_eq_intro} can be derived as the macroscopic process for phase coexistence model near criticality (with a small shift of $\theta(\eps) = \oO(\eps \log \eps)$ from its critical value).

\subsection{Structure of the article}

According to the argument given after Theorem~\ref{th:main}, the proof of the main theorem will be complete if we prove Theorems~\ref{th:fixed_pt_convergence} and~\ref{th:main_conv_stochastic_obj}. In Section~\ref{sec:outline}, we give an outline leading to the derivation of the PDE system, and in particular explains why $\lambda$ takes the value in \eqref{eq:lambda_value}. In Section~\ref{sec:PDE}, we prove Theorem~\ref{th:fixed_pt_convergence}, establishing the well-posedness and stability of the PDE. Section~\ref{sec:main_convergence} is devoted to the proof of Theorem~\ref{th:main_conv_stochastic_obj}, the convergence of all relevant stochastic objects. In the appendix, we give necessary backgrounds on Besov spaces, paraproducts and a brief description of the stochastic objects that arise from the standard dynamical $\Phi^4_3$ model.

\subsection{Notations}
\label{sec:notation}

We write
\begin{equation} \label{eq:notation_k}
\langle k\rangle_\eps =\sqrt{1+\eps^{-2}\qQ(2\pi \eps |k|)} \qquad \text{and} \qquad \langle k \rangle = \sqrt{1+4\pi^2|k|^2}\;.
\end{equation}
The above definition also works for $\eps=0$, and we have $\scal{k}_0 = \scal{k}$ in that situation. 

For $\alpha \in \R$, we write $\|\cdot\|_{\alpha}$ for $\|\cdot\|_{\bB^\alpha}$, the Besov norm for functions on the torus defined in \eqref{eq:norm_Besov}. We refer to Appendix~\ref{sec:Besov} for its precise definition and properties. Also, for $T>0$, $\theta \in (0,1)$ and $\alpha \in \R$, we use $\cC_{T}^{\alpha}$ and $\cC_{T}^{\theta,\alpha}$ to denote the following norms on space-time functions:
\begin{equation} \label{eq:notation_norm_spacetime}
\|f\|_{\cC_{T}^{\alpha}} := \sup_{t \in [0,T]} \|f(t)\|_{\alpha}\;, \qquad \|f\|_{\cC_T^{\theta,\alpha}} := \sup_{0 \leq < s < t \leq T} \frac{\|f(t)-f(s)\|_{\alpha}}{|t-s|^{\theta}}\;. 
\end{equation}
We also let $\iI_\eps$ and $\iI$ denote operators on space-time functions such that
\begin{equation} \label{eq:notation_op_I}
(\iI_\eps f)(t, \cdot) = \int_{0}^{t} e^{(t-r)(\lL_\eps-1)} f(r, \cdot) {\rm d}r\;, \quad (\iI f)(t,\cdot) = \int_{0}^{t} e^{(t-r)(\Delta-1)} f(r,\cdot) {\rm d}r\;. 
\end{equation}
We further use the notations
\begin{equation} \label{eq:notation_comm_heat}
\big[ e^{t(\lL_\eps-1)}, \prec \big] \big( f, g \big)= e^{t(\lL_\eps -1)} \big( f \prec g \big) - f \prec \big( e^{t(\lL_\eps -1)}g \big),
\end{equation}
as well as
\begin{equation} \label{eq:notation_comm_I}
\big[ \iI_\eps, \prec\big](f,g) = \iI_\eps (f\prec g)- f\prec \iI_\eps (g).
\end{equation}
Here, $\prec$ denotes the paraproduct introduced in Appendix~\ref{sec:Besov}. Moreover, properties that will be used in the sequel of the above norms and operators are provided therein as well.

We use $\kappa>0$ to denote a fixed constant that is sufficiently small which ensures that all arguments go through.

\subsection*{Acknowledgements}

D. E. gratefully acknowledges financial support from the National Council for Scientific and Technological Development - CNPq via a Universal grant 409259/2018-7 and a Bolsa de Produtividade 303520/2019-1. W.X. gratefully acknowledges financial support from the Engineering and Physical Sciences Research Council through the fellowship EP/N021568/1. 

Part of the work was completed when both authors visited The Hausdorff Research Institute for Mathematics during the junior trimester programme ``Randomness, PDEs and Nonlinear Fluctuations". We thank the hospitality and the financial support of HIM.

\section{Setting up the proof}
\label{sec:outline}

In this section, we give a heuristic explanation on why one expects Theorem~\ref{th:main} to be true. We then introduce proper setups and frameworks under which one can prove it rigorously. Since the operator $\lL_\eps$ takes a form that is convenient to work in Fourier space, we adapt the theory of paracontrolled distributions introduced in~\cite{para_control_theory} and its application to the $\Phi^4_3$ equation in~\cite{phi43_CC}. It will be interesting to work through all relevant bounds of the convolution kernel given by $(\d_t - \lL_\eps)^{-1}$ in the real space, and then to apply the general framework in \cite{rs_theory,rs_analytic, general_discrete} to prove convergence. 

For technical simplicity, we consider the equation
\begin{equation} \label{eq:main_Phi}
\d_t \Phi_\eps = (\lL_\eps - 1) \Phi_\eps - \eps^{-\frac{3}{2}}V'(\sqrt{\eps} \Phi_\eps) + \xi + C_\eps \Phi_\eps\;. 
\end{equation}
Compared to \eqref{eq:main_eq_intro}, we have subtracted the constant $1$ from $\lL_\eps$ to make the $0$-th Fourier mode stationary. This does not change the equation since one can add it back by changing $C_\eps$. The particular choice of $C_\eps$ which makes $\Phi_\eps$ converge to the limiting $\phi$ characterised in Appendix~\ref{app:pde} will be specified in Section~\ref{sec:derivation_pde} below. 

Let $\<1>_\eps$ and $\<1>$ denote the space-time stationary solutions to the linearised equations
\begin{equation} \label{eq:free_field}
\d_t \<1>_\eps = (\lL_\eps -1) \<1>_\eps + \xi\; \qquad \text{and} \qquad \d_t \<1> = (\Delta - 1) \<1> + \xi
\end{equation}
respectively. $\<1>_\eps$ is the key object from which all subsequent quantities are constructed from.

\subsection{Heuristic explanation of Theorem~\ref{th:main}}

We first explain why one expects Theorem~\ref{th:main} to be true, and in particular why $\lambda$, the coupling constant of the limiting equation, takes the form in \eqref{eq:lambda_value}, involving nontrivial contributions both from all higher order smoothings beyond the Laplacian, and from all higher order powers in $V'$ beyond the cubic term. 

If $\Phi_\eps$ solves \eqref{eq:main_eq_intro}, then the remainder $\zZ_\eps = \Phi_\eps - \<1>_\eps$ satisfies
\begin{equation*}
\d_t \zZ_\eps = (\lL_\eps-1) \zZ_\eps - \eps^{-\frac{3}{2}} V'\big( \sqrt{\eps} \<1>_\eps + \sqrt{\eps} \zZ_\eps \big) + C_\eps (\<1>_\eps + \zZ_\eps)\;.
\end{equation*}
The quantity $\sqrt{\eps} \<1>_\eps$ is asymptotically distributed as $\nN(0, \sigma^2)$ for $\sigma^2$ as in~\eqref{eq:variance_whole_space} (see Proposition~\ref{pr:ff_correlation}), and in analogy with the standard $\Phi^4_3$ equation, we expect $\zZ_\eps$ to be uniformly bounded. Hence, Taylor expanding $V'$ near the quantity $\sqrt{\eps} \<1>_\eps$ yields
\begin{equation} \label{eq:V_expansion}
\begin{split}
\eps^{-\frac{3}{2}} V'\big( \sqrt{\eps} \<1>_\eps + &\sqrt{\eps} \zZ_\eps \big)  = \eps^{-\frac{3}{2}} V'(\sqrt{\eps} \<1>_\eps) + \eps^{-1} V''(\sqrt{\eps} \<1>_\eps) \cdot \zZ_\eps\\
&+ \frac{1}{2\sqrt{\eps}} V^{(3)}(\sqrt{\eps} \<1>_\eps) \cdot \zZ_\eps^2 + \frac{1}{6} V^{(4)}(\sqrt{\eps} \<1>_\eps) \cdot \zZ_\eps^3 + \oO(\eps^{\frac{1}{2}-})\;.
\end{split}
\end{equation}
Since $\<1>_\eps$ is stationary Gaussian with an explicit variance, one can perform a chaos expansion to show that there exists a large constant $C_\eps^{(1)}$ such that for $\lambda$ given in \eqref{eq:lambda_value}, the quantities
\begin{equation} \label{eq:V_asymptotic_quantities}
\eps^{-\frac{3}{2}} V'(\sqrt{\eps} \<1>_\eps) - C_\eps^{(1)} \<1>_\eps, \; \; \eps^{-1} V''(\sqrt{\eps} \<1>_\eps) - C_\eps^{(1)}, \; \; \frac{1}{2 \sqrt{\eps}} V^{(3)}(\sqrt{\eps} \<1>_\eps), \; \; \frac{1}{6} V^{(4)}(\sqrt{\eps} \<1>_\eps)
\end{equation}
behave like $\lambda \<1>_\eps^{\diamond 3}$, $3 \lambda \<1>_\eps^{\diamond 2}$, $3 \lambda \<1>_\eps$ and $\lambda$ respectively. Here $\<1>_\eps^{\diamond j}$ denotes the $j$-th Wick power of $\<1>_\eps$. Plugging them back into the equation for $\zZ_\eps$, we get
\begin{equation*}
\d_t \zZ_\eps = (\lL_\eps - 1) \zZ_\eps - \lambda (\<1>_\eps + \zZ_\eps)^{\diamond 3} + (C_\eps - C_\eps^{(1)}) (\<1>_\eps + \zZ_\eps) + \rR_\eps\;, 
\end{equation*}
where the Wick product is with respect to the Gaussian structure of $\<1>_\eps$, and $\rR_\eps$ is an error term which vanishes in a proper sense as $\eps \rightarrow 0$. Now the above equation for $\zZ_\eps$ is almost the same as the remainder equation for the standard $\Phi^4_3 (\lambda)$ model (in particular, with $\lambda$ multiplying the cubic ``Wick" term), except that the Laplacian is replaced by $\lL_\eps$ and that there is an error term $\rR_\eps$. Hence, it is reasonable to expect that $\Phi_\eps$ convergences to $\Phi^4_3 (\lambda)$. 

We see that $\lambda$ arises as part of the coefficients of certain Wick powers in the expansion of the quantities in \eqref{eq:V_asymptotic_quantities}. Since $\sqrt{\eps} \<1>_\eps$ is asymptotically non-degenerate, these coefficients come as combined effects of all terms in $V'$ (except the linear one) and the variance of $\sqrt{\eps} \<1>_\eps$. The latter depends on higher order smoothing effects in $\lL_\eps$. Together they give the expression \eqref{eq:lambda_value}.


\subsection{Definition of the stochastic objects}
\label{sec:stochastic_objects}

Since part of the construction and estimates we are relying on have been already carried out and derived in~\cite{phi43_CC} and in~\cite{Phi43_pedestrians}, we only sketch some of the arguments and refer to the articles just alluded to for more details. 

Recall from \eqref{eq:lambda_value} that $\lambda = \frac{1}{6} \E [V^{(4)}\big( \nN(0,\sigma^2) \big)]$. In view of the quantities appearing in the expansion \eqref{eq:V_expansion}, it is natural to introduce processes $\<0'>_\eps$, $\<1'>_\eps$, $\<2'>_\eps$ and $\<3'>_\eps$ by setting
\begin{equation} \label{eq:noises_V}
\begin{split}
\<0'>_\eps &:= \frac{1}{6\lambda} V^{(4)}(\sqrt{\eps} \<1>_\eps)\;, \qquad \qquad \<1'>_\eps := \frac{1}{6\lambda \sqrt{\eps}} V^{(3)}(\sqrt{\eps} \<1>_\eps)\;,\\
\<2'>_\eps &:= \frac{1}{3 \lambda \eps} V''(\sqrt{\eps} \<1>_\eps) - C_{\eps}^{(1)}\;, \quad \phantom{1} \<3'>_\eps := \frac{1}{\lambda \eps^{3/2}} V'(\sqrt{\eps} \<1>_\eps) - 3 C_{\eps}^{(1)} \<1>_\eps\;.\\
\end{split}
\end{equation}
Here, the constant $C_{1}^{(\eps)}$ is given by
\begin{equation} \label{eq:C1}
C_{\eps}^{(1)} = \frac{1}{3 \lambda \eps} \E V''(\sqrt{\eps} \<1>_\eps)\;. 
\end{equation}
The reason for dividing by multiples of $\lambda$ is to normalise, so that $\<0'>_\eps$, $\<1'>_\eps$ and $\<2'>_\eps$ should converge to the constant $1$, the free field $\<1>$ and its Wick square $\<1>^{\diamond 2}$, while $\<3'>_\eps$ should behave like $\<1>_\eps^{\diamond 3}$, which converges only after further convolution with the heat kernel. 

We now introduce two new processes $\<2'0>_\eps$ and $\<3'0>_\eps$ by setting
\begin{equation} \label{eq:noises_integration}
\<2'0>_\eps(t) = \int_{-\infty}^{t} e^{(t-r)(\lL_\eps-1)} \<2'>_\eps(r) {\rm d}r\;, \qquad \<3'0>_\eps(t) = \int_{-\infty}^{t} e^{(t-r)(\lL_\eps-1)} \<3'>_\eps(r) {\rm d}r\;.
\end{equation}
Note that these are different from $\iI_\eps(\<2'>_\eps)$ and $\iI_\eps(\<3'>_\eps)$ since the integration in time starts from $-\infty$. Therefore they are stationary in both space and time. Finally, we define $\<2'2'>_\eps$, $\<3'1'>_\eps$ and $\<3'2'>_\eps$ by
\begin{equation} \label{eq:noises_2nd}
\begin{split}
\<2'2'>_\eps &:= \<2'0>_\eps \circ \<2'>_\eps - C_\eps^{(2)}\;, \qquad \<3'1'>_\eps := \<3'0>_\eps \circ \<1'>_\eps - C_{\eps}^{(3)}\;,\\
\<3'2'>_\eps &:= \<3'0>_\eps \circ \<2'>_\eps - (3 C_\eps^{(2)} + 2 C_\eps^{(3)}) \<1>_\eps\;,
\end{split}
\end{equation}
where $C_\eps^{(2)}$ and $C_\eps^{(3)}$ are given by
\begin{equation} \label{eq:C2C3}
C_{\eps}^{(2)} = \E \big[ \; \<2'0>_\eps \circ \<2'>_\eps \; \big]\;, \quad\text{and}\quad C_\eps^{(3)} = \E \big[ \; \<3'0>_\eps \circ \<1'>_\eps \; \big]\;.
\end{equation}
The $C_{\eps}^{(j)}$'s above do not depend on $(t,x)$ since all the processes defined above are space-time stationary. One can see from the expression of \eqref{eq:C1} that $C_\eps^{(1)}$ diverges at order $\eps^{-1}$. We will see from Proposition~\ref{pr:C2C3_expression} below that $C_\eps^{(2)}$ diverges logarithmically while $C_\eps^{(3)}$ is uniformly bounded in $\eps$. 

The above definition of the stochastic objects will ensure that
\begin{equation*}
\big(\;\<0'>_\eps\;, \<1'>_\eps\;, \<2'>_\eps\;, \<3'0>_\eps\;, \<3'1'>_\eps\;, \<2'2'>_\eps\;, \<3'2'>_\eps \big) \rightarrow \big( \; 1\;, \<1>\;, \<2>\;, \<30>\;, \<31>\;, \<22>\;, \<32> \big)
\end{equation*}
as $\eps\to 0$ in a suitable topology. The symbols on the right hand side above are the stochastic objects that appear in the standard $\Phi^4_3$ model, which are described in Appendix~\ref{app:stochastic}. For convenience, we have summarised our processes, their corresponding limiting objects and their Besov regularities in Table~\ref{table:noise}.

{\small
	\begin{table}
		\centering
		\renewcommand{\arraystretch}{1.5}
		\begin{tabular}{cccccccc}
			\toprule
			$\tau_\eps:$ & $\<0'>_\eps$ & $\<1'>_\eps$ & $\<2'>_\eps$ & $\<3'0>_\eps$ & $\<3'1'>_\eps$ & $\<2'2'>_\eps$ & $\<3'2'>_\eps$ 
			\\
			\midrule
			$\Phi^4_3:$ & $1$ & $\<1>$ & $\<2>$ & $\<30>$ & $\<31>$ & $\<22>$ & $\<32>$
			\\
			\midrule
			Besov reg.:. & $-\kappa$ & $-\frac{1}{2} - \kappa $ & $-1 - \kappa$ & $\frac{1}{2} - \kappa$ & $-\kappa$ & $- \kappa$ & $-\frac{1}{2}-\kappa$
			\\
			\bottomrule
		\end{tabular}
		\bigskip
		\caption{List of processes, their limits and their respective regularities.}
		\label{table:noise}
	\end{table}
}

\subsection{Formal derivation of the (system of) PDEs for the remainder}
\label{sec:derivation_pde}

We now start to formally derive a system of equations for the remainder. The solution theory for this perturbed equation is essentially the same as that of $\Phi^4_3$. We give a brief description below for the sake of completeness. We follow the formulation in \cite{Phi43global}. We choose $C_\eps$ to be
\begin{equation} \label{eq:C_eps_split}
C_{\eps} = 3 \lambda C_{\eps}^{(1)} - 9 \lambda^2 C_{\eps}^{(2)} - 6 \lambda^2 C_{\eps}^{(3)}\;, 
\end{equation}
where $C_\eps^{(1)}$, $C_\eps^{(2)}$ and $C_\eps^{(3)}$ are given in \eqref{eq:C1} and \eqref{eq:C2C3}. In view of \eqref{eq:V_expansion} and that $\eps^{-\frac{3}{2}} V'(\sqrt{\eps} \<1>_\eps)$ behaves like $\lambda \<3'>_\eps$, it is natural to add $\lambda \<3'0>_\eps$ to the remainder $\zZ_\eps$ and consider
\begin{equation*}
u_\eps := \zZ_\eps + \lambda \<3'0>_\eps = \Phi_\eps - \<1>_\eps + \lambda \<3'0>_\eps\;.
\end{equation*}
Using the definition of the processes in \eqref{eq:noises_V} and the choice of $C_\eps$ in \eqref{eq:C_eps_split}, we see that $u_\eps$ satisfies the equation
\begin{equation*}
\begin{split}
\d_t u_\eps = &(\lL_\eps - 1) u_\eps - 3 \lambda \; \<2'>_\eps \; (u_\eps - \lambda \<3'0>_\eps) - 3 \lambda \; \<1'>_\eps \; (u_\eps - \lambda \<3'0>_\eps)^{2} - \lambda \; \<0'>_\eps \; (u_\eps - \lambda \<3'0>_\eps)^{3}\\
&\phantom{11}- \eps^{-\frac{3}{2}} V'\big(\sqrt{\eps} \<1>_\eps; \sqrt{\eps} (u_\eps - \lambda \<3'0>_\eps) \big) - \big(9 \lambda^2 C_\eps^{(2)} + 6 \lambda^2 C_{\eps}^{(3)} \big) (\<1>_\eps + u_\eps - \lambda \<3'0>_\eps)\;,
\end{split}
\end{equation*}
where
\begin{equation} \label{eq:V_remainder}
V'(x;y) := V'(x+y) - \sum_{j=0}^{3} \frac{1}{j!} V^{(j+1)}(x) \cdot y^{j}\;
\end{equation}
denotes the Taylor remainder. 

Because of the term $\<2'>_\eps \; (u_\eps - \lambda \<3'0>_\eps)$ on the right hand side, the best regularity one can hope for $u_\eps$ (uniformly in $\eps$) is $\cC^{1-}$, which does not allow us to close the loop since both $\<2'>_\eps \; (u_\eps - \lambda \<3'0>_\eps)$ and $\<1'>_\eps \; (u_\eps - \lambda \<3'0>_\eps)^2$ involve products between terms that are below the threshold of analytic well-posedness (in the limit as $\eps \rightarrow 0$). We first decompose these two products into paraproducts, and combine with part of the renormalisation to get
\begin{equation} \label{eq:u_eps}
\begin{split}
\d_t u_\eps = &(\lL_\eps - 1) u_\eps - 3 \lambda (u_\eps - \lambda \<3'0>_\eps) \prec \<2'>_\eps - 3 \lambda (u_\eps - \lambda \<3'0>_\eps) \succ \<2'>_\eps - 3 \lambda u_\eps \circ \<2'>_\eps\\
&- \sum_{j=0}^{3} \widetilde{F}_j u_\eps^j - \eps^{-\frac{3}{2}} V'\big( \sqrt{\eps} \<1>_\eps; \sqrt{\eps} (u_\eps - \lambda \<3'0>_\eps) \big) - 9 \lambda^2 C_\eps^{(2)} (u_\eps - \lambda \<3'0>_\eps)\;.
\end{split}
\end{equation}
where the coefficients $\widetilde{F}_j$ are given by
\begin{equation} \label{eq:coeff_F_tilde}
\begin{split}
&\widetilde{F}_3 = - \lambda \; \<0'>_\eps\;, \qquad \qquad \widetilde{F}_2 = 3 \lambda^2 \; \<0'>_\eps \<3'0>_\eps - 3 \lambda \<1'>_\eps\;,\\
&\widetilde{F}_1 = - 3 \lambda^3 \; \<0'>_\eps \big( \<3'0>_\eps \big)^{2} + 6 \lambda^2 \Big[ \<3'0>_\eps\prec \<1'>_\eps + \<3'0>_\eps \succ \<1'>_\eps + \big( \underbrace{\<3'0>_\eps \circ \<1'>_\eps - C_\eps^{(3)}}_{\<3'1'>_\eps} \big) \Big]\;,\\
&\widetilde{F}_0 = \lambda^4 \; \<0'>_\eps \big(\<3'0>_\eps \big)^{3} - 3 \lambda^3 \Big[ \big( \<3'0>_\eps \big)^{2} \prec \<1'>_\eps + \big( \<3'0>_\eps \big)^{2} \succ \<1'>_\eps + \big( \<3'0>_\eps \circ \<3'0>_\eps \big) \circ \<1'>_\eps\\
&+ 2 \big( \underbrace{\<3'0>_\eps \circ \<1'>_\eps - C_\eps^{(3)}}_{\<3'1'>_\eps} \big) \<3'0>_\eps + 2 \Com \big( \<3'0>_\eps; \<3'0>_\eps; \<1'>_\eps \big) \Big] + 3 \lambda^2 \big( \underbrace{\<3'0>_\eps \circ \<2'>_\eps - (3 C_\eps^{(2)} + 2 C_\eps^{(3)} ) \<1>_\eps}_{\<3'2'>_\eps} \big)\;, 
\end{split}
\end{equation}
and the commutator operator $\Com$ is given in Proposition~\ref{pr:commutator_estimate}. We see that if all the stochastic objects live in their corresponding regularity spaces (as in Table~\ref{table:noise}) and that if $u_\eps \in \cC^{1-}$, then all of the above terms would be well defined except the resonance product $u_\eps \circ \<2'>_\eps$. 

For this term we need to employ the structures of $u_\eps$ inherited by the fact that it solves the equation \eqref{eq:u_eps}, and combine it with the remaining renormalisation terms to give a meaningful expression of this reasonance product. To employ such structures given by the equation, we split $u_\eps$ into $v_\eps + w_\eps$ where $v_\eps$ satisfies
\begin{equation} \label{eq:v_eps}
\d_t v_\eps = (\lL_\eps - 1) v_\eps - 3 \lambda (v_\eps + w_\eps - \lambda \<3'0>_\eps) \prec \<2'>_\eps.
\end{equation}
From the equation for $v_\eps$ and the regularities of the stochastic objects, it is natural to expect that $v_\eps$ inherits the regularity of $u_\eps$ while $w_\eps$ is almost in $\bB^{\frac{3}{2}-}$ and hence $\<2'>_\eps \circ w_\eps$ can be defined uniformly in $\eps$. To treat the only problematic term $\<2'>_\eps \circ v_\eps$, we use the equation for $v_\eps$ so that
\begin{equation*}
v_\eps (t) = e^{t(\lL_\eps - 1)} v_\eps(0) - 3 \lambda (u_\eps - \lambda \<3'0>_\eps) \prec \iI_\eps(\<2'>_\eps) - 3 \lambda [\iI_\eps, \prec](u_\eps - \lambda \<3'0>_\eps, \<2'>_\eps)\;, 
\end{equation*}
where we recall the operator $\iI_\eps$ and commutator $[\iI_\eps, \prec]$ from \eqref{eq:notation_op_I} and \eqref{eq:notation_comm_I}.Plugging this expression into $\<2'>_\eps \circ v_\eps$ and combine it with the remaining renormalisation, we get
\begin{equation} \label{eq:v_resonance}
\begin{split}
- 3 \lambda \<2'>_\eps &\circ v_\eps - 9 \lambda^2 C_\eps^{(2)} (u_\eps - \lambda \<3'0>_\eps) = - 3 \lambda \<2'>_\eps \circ e^{t(\lL_\eps-1)} v_\eps(0)\\
&+ 9 \lambda^2 \Big[ \<2'>_\eps \circ [\iI_\eps, \prec](u_\eps - \lambda \<3'0>_\eps, \<2'>_\eps) + \Com \big(u_\eps - \lambda \<3'0>_\eps; \iI_\eps(\<2'>_\eps); \<2'>_\eps \big)\\
&+ \big( \underbrace{\<2'0>_\eps \circ \<2'>_\eps - C_\eps^{(2)}}_{\<2'2'>_\eps} - \; \<2'>_\eps \circ e^{t(\lL_\eps-1)} \<2'0>_\eps(0) \big) (u_\eps - \lambda \<3'0>_\eps) \Big]\;, 
\end{split}
\end{equation}
where we have used $(\iI_\eps \<2'>_\eps)(t) = \<2'0>_\eps (t) - e^{t(\lL_\eps-1)} \<2'0>_\eps(0)$, and all processes unless indicated otherwise are evaluated at time $t$. If $u_\eps$ has a positive H\"older-in-time regularity, then all the above terms will be well defined. Hence, combining \eqref{eq:u_eps} and \eqref{eq:v_resonance}, we derive the system of $(v_\eps, w_\eps)$ as
\begin{equation} \label{eq:pde_system_formal}
\begin{cases}
&\d_t v_\eps = (\lL_\eps - 1) v_\eps - 3 \lambda (v_\eps + w_\eps - \lambda \<3'0>_\eps) \prec \<2'>_\eps\\
&\d_t w_\eps = (\lL_\eps - 1) w_\eps - 3 \lambda \big( e^{t(\lL_\eps-1)} v_\eps(0) + w_\eps \big) \circ \<2'>_\eps + G_\eps(v_\eps+w_\eps)
\end{cases}
\end{equation}
where
\begin{equation} \label{eq:G_formal}
\begin{split}
G_\eps(u) = & \sum_{j=0}^{3} F_j u^{j} - 3 \lambda (u - \lambda \<3'0>_\eps) \succ \<2'>_\eps - \eps^{-\frac{3}{2}} V'\big(\sqrt{\eps} \<1>_\eps; \sqrt{\eps} (u - \lambda \<3'0>_\eps) \big)\\
&+ 9 \lambda^2 \Big[ \Com \big( u - \lambda \<3'0>_\eps; \; \iI_\eps (\<2'>_\eps); \; \<2'>_\eps \big) + \<2'>_\eps \circ [\iI_\eps, \prec](u - \lambda \<3'0>_\eps, \<2'>_\eps)\\
&\phantom{111}- \big( \<2'>_\eps \circ e^{t(\lL_\eps-1)} \<2'0>_\eps(0) \big) \cdot (u - \lambda \<3'0>_\eps) \Big]\;, 
\end{split}
\end{equation}
and the coefficients $F_j$ are given by
\begin{equation} \label{eq:coeff_F_formal}
F_3 = \widetilde{F}_3\;, \quad F_2 = \widetilde{F}_2\;, \quad F_1 = \widetilde{F}_1 + 9 \lambda^2 \; \<2'2'>_\eps\;, \quad F_0 = \widetilde{F}_0 - 9 \lambda^3 \; \<2'2'>_\eps \cdot \<3'0>_\eps\;,
\end{equation}
with $\widetilde{F}_j$ given in \eqref{eq:coeff_F_tilde}.

\begin{rmk}
	In addition to the operator $\lL_\eps$ and the remainder term $\eps^{-\frac{3}{2}} V'(\sqrt{\eps} \cdot; \sqrt{\eps} \cdot)$, the system \eqref{eq:pde_system_formal} is also different from that in \cite{Phi43global, phi43_CC} in that our last term in $G_\eps$ (the one involving $e^{t(\lL_\eps-1)}\<2'0>_\eps(0)$) is extra. The reason for the appearance of this additional term is that our definition of $\<2'0>_\eps$ is slightly different -- the integration of heat kernel starts from $-\infty$ rather than $0$, and hence both $\<2'0>_\eps$ and $\<2'2'>_\eps$ are stationary. 
	
	If we define $\<2'0>_\eps$ to be the same as $\iI_\eps (\<2'>_\eps)$ so that $\<2'0>_\eps(0) = 0$, then there would be no such term, but the trade-off is that the process $\<2'2'>_\eps$ would not be stationary in time. 
\end{rmk}

\section{Solution theory}
\label{sec:PDE}

In this section, we prove pathwise well-posedness and stability properties of the system \eqref{eq:pde_system_formal}. It is then natural to introduce spaces that encode the pathwise analytic properties of the relevant stochastic processes. In the rest of this section, all functions/distributions in relevant spaces are treated deterministically, and the only information used are their relevant norms.

\subsection{The fixed point equation}\label{sec:fixed_point}

We start by introducing the relevant spaces for the external functions/distribution as well as the space in which we are going to construct the solution pair $(v_\eps, w_\eps)$. 

For every $T>0$, let $\xX_T$ be the space of a collection of continuous evolutions in certain Besov spaces (to be specified in Definition~\ref{de:norm_enhanced_noise}) up to time $T$. We denote a generic element in $\xX_T$ by
\begin{equation} \label{eq:enhanced_noise}
\Upsilon = \big(\; \<0'>\;, \<1'>\;, \<2'>\;, \<3'0>\;, \<3'1'>\;, \<2'2'>\;, \<3'2'> \; \big)\;. 
\end{equation}

\begin{defn} \label{de:norm_enhanced_noise}
We define the norm $\|\cdot\|_{\xX_T}$ by
\begin{equation} \label{eq:norm_enhanced_noise}
\|\Upsilon\|_{\xX_T} := \sum_{\tau} \sup_{t \in [0,T]} \|\tau(t)\|_{|\tau|} + \sup_{0 \leq s < t \leq T} \frac{\|\<3'0>(t) - \<3'0>(s)\|_{\frac{1}{4}-\kappa}}{|t-s|^{\frac{1}{8}}}\;, 
\end{equation}
where the sum is taken over all the seven components in $\Ups$, $|\tau|$ is the homogeneity of the component $\tau$ as specified in Table~\ref{table:noise}, and $\|\cdot\|_{|\tau|}$ is the Besov norm as defined in \eqref{eq:norm_Besov}. We also write $\xX = \xX_1$ for $T=1$. 
\end{defn}

The symbols in \eqref{eq:enhanced_noise} are abstract placeholders for generic elements in $\xX_T$. We do not assume any relationship between them and the processes introduced in Section~\ref{sec:stochastic_objects}. 

Let $\{\psi_\eps\}$ and $\{h_\eps\}$ be families of space-time functions indexed by $\eps \in (0,1)$. Let $h$ be another space-time function. We will specify relevant analytic bounds for these functions later but we mention already now that they play the roles of $\sqrt{\eps} \<1>_\eps$, $e^{t(\lL_\eps-1)} \<2'0>_\eps(0)$ and its limit $e^{t(\Delta-1)} \<20>(0)$ respectively. In view of \eqref{eq:G_formal}, \eqref{eq:coeff_F_formal} and \eqref{eq:coeff_F_tilde}, for every $\lambda \in \R$, $\eps \in (0,1)$ and every $\Upsilon \in \xX_T$, we define a map $G_{\eps}(\lambda,\Upsilon,\cdot)$ on the set of sufficiently regular space-time functions by
\begin{equation} \label{eq:G_eps}
\begin{split}
G_{\eps}(&\lambda,\Upsilon,u) := \sum_{j=0}^{3} F_j(\lambda,\Upsilon) u^{j} - 3 \lambda (u-\lambda \<3'0>) \succ \<2'> - \eps^{-\frac{3}{2}} V'\big(\sqrt{\eps} \psi_\eps; \sqrt{\eps} (u - \lambda \<3'0>) \big)\\
&+ 9 \lambda^2 \Big[ \Com \big(u-\lambda\<3'0>; \iI_\eps(\<2'> \big); \<2'>) + \<2'> \circ [\iI_\eps, \prec](u-\lambda \<3'0>, \<2'>) - (\<2'> \circ h_\eps) \cdot (u - \lambda \<3'0>) \Big]\;,
\end{split}
\end{equation}
where the coefficients $F_j$ are given by
\begin{equation} \label{eq:F_j_coefficients}
\begin{split}
&F_3(\lambda,\Upsilon) = - \lambda \; \<0'>\;, \qquad F_2(\lambda,\Upsilon) = 3 \lambda^2 \; \<0'> \cdot \<3'0> - 3 \lambda \; \<1'>\;,\\
&F_1(\lambda,\Upsilon) = - 3 \lambda^3 \; \<0'> \cdot (\<3'0>)^{2}  + 6 \lambda^2 \Big( \<3'0> \prec \<1'> + \<3'0> \succ \<1'> + \<3'1'> \Big) + 9 \lambda^2 \; \<2'2'>\;\\
&F_0(\lambda,\Upsilon) = \lambda^4 \; \<0'> \cdot (\<3'0>)^{3} - 3 \lambda^3 \Big[ (\<3'0>)^2 \prec \<1'> + (\<3'0>)^2 \succ \<1'> + (\<3'0> \circ \<3'0>) \circ \<1'>\\
&\phantom{111111111}+ 2 \; \<3'1'> \cdot \<3'0> + 2 \; \Com (\<3'0>; \<3'0>; \<1'>) \Big] + 3 \lambda^2 \; \<3'2'> - 9 \lambda^3 \; \<2'2'> \cdot \<3'0>\;. 
\end{split}
\end{equation}
By Bony's estimate and the definition of $\|\cdot\|_{\xX_T}$, we see that the coefficients $F_j$'s are all well defined, and satisfy the bounds
\begin{equation*}
\begin{split}
\|F_3(\Upsilon)\|_{\cC_{T}^{-\kappa}}\phantom{11} &\lesssim \|\Upsilon\|_{\xX_T}\;, \qquad \qquad \qquad \|F_2(\Upsilon)\|_{\cC_{T}^{-\frac{1}{2}-\kappa}} \lesssim \|\Upsilon\|_{\xX_T} (1 + \|\Upsilon\|_{\xX_T})\;,\\
\|F_1(\Upsilon)\|_{\cC_{T}^{-\frac{1}{2}-\kappa}} &\lesssim \|\Upsilon\|_{\xX_T} (1+\|\Upsilon\|_{\xX_T}^{2})\;, \quad \|F_0(\Upsilon)\|_{\cC_{T}^{-\frac{1}{2}-\kappa}} \lesssim \|\Upsilon\|_{\xX_T} (1 + \|\Upsilon\|_{\xX_T}^{3})\;, 
\end{split}
\end{equation*}
where the dependence of $\lambda$ are hidden in the proportionality constants. For $\eps=0$, we define $G_0$ as
\begin{equation} \label{eq:G_0}
\begin{split}
G_0(\lambda,\Ups,u) := &\sum_{j=0}^{3} F_{j}(\lambda,\Upsilon) u^j - 3 \lambda (u- \lambda \<3'0>) \succ \<2'> + 9 \lambda^2 \Big[ \Com \big(u-\lambda\<3'0>; \iI(\<2'>); \<2'> \big)\\
&+ \<2'> \circ [\iI, \prec](u-\lambda \<3'0>, \<2'>) - (\<2'> \circ h) \cdot (u - \lambda \<3'0>) \Big]\;,
\end{split}
\end{equation}
and consider the systems of equations for $(v,w)$ given by
\begin{equation} \label{eq:fixed_pt_0}
\begin{split}
&v(t) = e^{t (\Delta-1)} v(0) - 3 \lambda \int_{0}^{t} e^{(t-r)(\Delta-1)} \Big[ \big( v(r) + w(r) - \lambda \<3'0>(r) \big) \prec \<2'>(r) \Big] {\rm d}r\;,\\
&w(t) = e^{t(\Delta-1)} w(0) - 3 \lambda \int_{0}^{t} e^{(t-r)(\Delta-1)} \Big[ \big( e^{r(\Delta-1)}v(0) + w(r) \big) \circ \<2'>(r) \Big] {\rm d}r\\
&\phantom{11111}+ \int_{0}^{t} e^{(t-r)(\Delta-1)} G_0 \big(\lambda, \Ups(r), v(r) + w(r) \big) {\rm d}r\;.
\end{split}
\end{equation}
This is the natural candidate for the limiting equation. We now specify the space in which we are seeking the solutions. Since the linear evolution allows a singularity at $t=0$ (even when measured as a map between the same Besov spaces), we set up $\eps$-dependent spaces to encode this possible singularity. Recall that $V'$ has degree $2n-1$. 

\begin{defn} \label{de:space_solution_remainder}
We fix $\delta_0 \in (0, \frac{\kappa}{n})$. Define the norms $\|\cdot\|_{\yY_{T,\eps}^{(1)}}$ and $\|\cdot\|_{\yY_{T,\eps}^{(2)}}$ on the space of space-time functions up to time $T$ by
\begin{equation} \label{eq:space_solution_remainder_sep_eps}
\begin{split}
\|v\|_{\yY_{T,\eps}^{(1)}} := &\sup_{t \in [0,\eps^2]} \big( (\sqrt{t} / \eps )^{\delta_0} \|v(t)\|_{\kappa} \big) + \sup_{t \in [\eps^2, T]} \|v(t)\|_{\kappa}\\
&+ \sup_{t \in [0,T]} \big( t^{\frac{2}{3}} \|v(t)\|_{1-2\kappa} \big) + \sup_{0 \leq s < t \leq T} s^{\frac{1}{4}} \frac{\|v(t)-v(s)\|_{\kappa}}{|t-s|^{\frac{1}{8}}}\;,\\
\|w\|_{\yY_{T,\eps}^{(2)}} := &\sup_{t \in [0,\eps^2]} \big( (\sqrt{t} / \eps )^{\delta_0} \|w(t)\|_{\kappa} \big) + \sup_{t \in [\eps^2, T]} \|w(t)\|_{\kappa}\\
&+ \sup_{t \in [0,T]} \big( t^{\frac{2}{3}} \|w(t)\|_{1+2\kappa} \big) + \sup_{0 \leq s < t \leq T} s^{\frac{1}{4}} \frac{\|w(t)-w(s)\|_{\kappa}}{|t-s|^{\frac{1}{8}}}\;.
\end{split}
\end{equation}
For $\eps=0$, define the norms $\|\cdot\|_{\yY_T^{(1)}}$ and $\|\cdot\|_{\yY_T^{(2)}}$ by
\begin{equation} \label{eq:space_solution_remainder_sep}
\begin{split}
\|v\|_{\yY_T^{(1)}} &:= \sup_{t \in [0,T]} \Big( \|v(t)\|_{\kappa} + t^{\frac{2}{3}} \|v(t)\|_{1-2\kappa} \Big) + \sup_{0 \leq s < t \leq T} s^{\frac{1}{4}} \frac{\|v(t)-v(s)\|_{\kappa}}{|t-s|^{\frac{1}{8}}}\;,\\
\|w\|_{\yY_T^{(2)}} &:= \sup_{t \in [0,T]} \Big( \|w(t)\|_{\kappa} + t^{\frac{2}{3}} \|w(t)\|_{1+2\kappa} \Big) + \sup_{0 \leq s < t \leq T} s^{\frac{1}{4}} \frac{\|w(t)-w(s)\|_{\kappa}}{|t-s|^{\frac{1}{8}}}\;.
\end{split}
\end{equation}
We define the norm on the space $\yY_{T,\eps}$ and $\yY_T$ of pairs of space-time functions by
\begin{equation} \label{eq:space_solution_remainder}
\| (v, w) \|_{\yY_{T,\eps}} := \|v\|_{\yY_{T,\eps}^{(1)}} + \|w\|_{\yY_{T,\eps}^{(2)}}\;, \qquad \|(v,w)\|_{\yY_T} := \|v\|_{\yY_T^{(1)}} + \|w\|_{\yY_T^{(2)}}. 
\end{equation}
The only difference between $\yY_{T,\eps}^{(1)}$ and $\yY_{T,\eps}^{(2)}$ is that the spatial regularity (at fixed time) for the former is $1-2\kappa$ while it is $1+2\kappa$ for the latter, and the same is true for $\yY_T^{(1)}$ and $\yY_T^{(2)}$. 
\end{defn}

In the sequel, we will write $\yY_{T,\eps}$ for $\eps \in [0,1]$, with $\eps=0$ corresponding to the space $\yY_T$. The following is the main statement on the existence and convergence of the solutions $(v_\eps,w_\eps)$.

\begin{thm}
	\label{th:fixed_pt_convergence}
	Let $\{\psi_\eps\}_{\eps \in (0,1]}$ and $\{h_\eps\}_{\eps \in [0,1]}$ be families of space-time functions such that
	\begin{equation*}
	\sup_{\eps \in (0,1]} \sup_{(t,x) \in [0,1] \times \T^3} \eps^{\frac{1}{2}+\kappa} |\psi_\eps(t,x)| < +\infty\;, \quad \sup_{\eps \in (0,1)} \sup_{t \in [0,1]} \Big( t^{\frac{1}{4}} \|h_\eps(t)\|_{1+2\kappa} \Big) < +\infty\;.
	\end{equation*}
	Recall the definition of the spaces $\xX_T$ and $\yY_T$ in \eqref{eq:norm_enhanced_noise} and \eqref{eq:space_solution_remainder}. Consider the fixed point problem
	\begin{equation} \label{eq:fixed_pt_eps}
	\begin{split}
	&v_\eps(t) = e^{t (\lL_\eps-1)} v_\eps(0) - 3 \lambda_\eps \int_{0}^{t} e^{(t-r)(\lL_\eps-1)} \Big[ \big( v_\eps(r) + w_\eps(r) - \lambda_\eps \<3'0>_\eps(r) \big) \prec \<2'>_\eps(r) \Big] {\rm d}r\;,\\
	&w_\eps(t) = e^{t(\lL_\eps-1)} w_\eps(0) - 3 \lambda_\eps \int_{0}^{t} e^{(t-r)(\lL_\eps-1)} \Big[  \<2'>_\eps(r) \circ \big( e^{r(\lL_\eps-1)}v_\eps(0) + w_\eps(r) \big) \Big] {\rm d}r\\
	&\phantom{11111}+ \int_{0}^{t} e^{(t-r)(\lL_\eps-1)} G_\eps \big(\lambda_\eps, \Ups_\eps(r), v_\eps(r) + w_\eps(r) \big) {\rm d}r\;, 
	\end{split}
	\end{equation}
	where $G_\eps$ for $\eps > 0$ and $\eps=0$ are given in \eqref{eq:G_eps} and \eqref{eq:G_0} respectively. Then for every $\lambda_\eps \in \R$, $\Ups_\eps \in \xX$ and $\big( v_\eps(0), w_\eps(0) \big) \in \bB^\kappa \times \bB^\kappa$, there exists $T_\eps \leq 1$ such that the fixed point problem \eqref{eq:fixed_pt_eps} has a unique solution $(v_\eps, w_\eps) \in \yY_{T_\eps}$. Furthermore, if $\lambda_\eps$, $\Ups_\eps$ and $\big( v_\eps(0), w_\eps(0) \big)$ are uniformly bounded in their respective spaces, the local existence time $T_\eps$ can be taken uniform in $\eps \in [0,1]$. 
	
	Fix arbitrary $\lambda \in \R$, $\Ups \in \xX$ and $\big( v(0), w(0) \big) \in \xX$. Let $(v,w) \in \yY_T$ denote the unique solution to \eqref{eq:fixed_pt_eps} with $\eps=0$ and with the above inputs. Suppose $\lambda_\eps \rightarrow \lambda$, $\Ups_\eps \rightarrow \Ups$ in $\xX$, $\big( v_\eps(0), w_\eps(0) \big) \rightarrow \big( v(0), w(0) \big)$ in $\bB^\kappa \times \bB^\kappa$, and $\sup_{t \in [0,1]} \big( t^{\frac{1}{4}} \|h_\eps(t) - h(t)\|_{1+2\kappa} \big) \rightarrow 0$. Then there exists $\eps_0>0$ such that for every $\eps \in (0,\eps_0)$, the solution $(v_\eps, w_\eps) \in \yY_T$ to \eqref{eq:fixed_pt_eps} can be defined up to the same time $T$. Furthermore, we have $\|(v_\eps,w_\eps) - (v,w)\|_{\yY_{T,\eps}} \rightarrow 0$ as $\eps \rightarrow 0$. 
\end{thm}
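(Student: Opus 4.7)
\medskip

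\noindent\textbf{Proof proposal.} The strategy is the standard Banach fixed point/stability scheme for paracontrolled systems, but carried out on the $\eps$-dependent spaces $\yY_{T,\eps}$ so that all bounds are uniform as $\eps\to 0$. Write $\Psi_\eps=(\Psi_\eps^{(1)},\Psi_\eps^{(2)})$ for the right-hand side of \eqref{eq:fixed_pt_eps}, viewed as a map on $\yY_{T,\eps}$. The plan is to show (i) that $\Psi_\eps$ sends a ball of radius $R$ around the free evolutions of $(v_\eps(0),w_\eps(0))$ into itself, and (ii) that it is a strict contraction there, for $T$ small but uniform in $\eps$ under the stated bounds on $(\lambda_\eps,\Ups_\eps,\psi_\eps,h_\eps,v_\eps(0),w_\eps(0))$. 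The two splits at $t=\eps^2$ in the definition of $\yY_{T,\eps}$ are tailored to Lemma~\ref{le:heat_regularisation}: for $t\le\eps^2$ the heat semigroup $e^{t(\lL_\eps-1)}$ has a $\delta_0$-loss which is absorbed by the weight $(\sqrt{t}/\eps)^{\delta_0}$, while for $t\ge\eps^2$ one recovers the usual parabolic Schauder bounds uniformly in $\eps$. In particular, $\iI_\eps$ enjoys the same $2-\kappa$ regularisation as $\iI$ on the relevant Besov scales uniformly in $\eps$ (up to the $\delta_0$-loss near $0$).

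The bulk of the estimates are then standard and local in time. First, the paraproduct $(v_\eps+w_\eps-\lambda_\eps\<3'0>_\eps)\prec\<2'>_\eps$ is controlled in $\cC_T^{-1-\kappa}$ by Bony's estimate using $v_\eps,w_\eps\in\cC_T^{\kappa}$ and $\<2'>_\eps\in\cC_T^{-1-\kappa}$; after $\iI_\eps$ this lives in the space dictating $\yY_{T,\eps}^{(1)}$. For $w_\eps$, the resonance $\<2'>_\eps\circ w_\eps$ is controlled by $\|w_\eps\|_{1+2\kappa}$ together with $\|\<2'>_\eps\|_{-1-\kappa}$, giving regularity $\kappa$ before $\iI_\eps$. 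The $G_\eps$ contributions decompose into pieces each of which is handled either by Bony's estimates, by the paraproduct commutator bound $\|[\iI_\eps,\prec](f,g)\|$ (with the required gain in regularity uniform in $\eps$), or by the generalised commutator estimate from Proposition~\ref{pr:commutator_estimate}; the algebraic inputs are exactly the bounds on $F_j(\lambda,\Ups_\eps)$ stated below \eqref{eq:F_j_coefficients}. The only genuinely new piece is the term $\eps^{-3/2}V'(\sqrt{\eps}\psi_\eps;\sqrt{\eps}(u-\lambda\<3'0>))$: expanding the Taylor remainder as a sum over $j\in\{4,\dots,2n-1\}$ and using the pointwise assumption $\|\sqrt{\eps}\psi_\eps\|_{L^\infty}\lesssim\eps^{-\kappa}$, each monomial is bounded by $\eps^{(j-3)/2-C(n,j)\kappa}$ times a polynomial in $\|u\|_{\cC_T^0}$ and $\|\<3'0>_\eps\|_{\cC_T^0}$. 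Choosing $\delta_0<\kappa/n$ ensures that this entire term is $\oO(\eps^{1/2-})$ in $\cC_T^{\kappa}$, so in the fixed point it is a harmless perturbation; the same argument gives a Lipschitz bound for its difference under a change of $u$.

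Combining these bounds, one obtains $\|\Psi_\eps(v,w)\|_{\yY_{T,\eps}}\le C(1+R)^{2n}(\|{\rm data}\|+T^{\eta}R)$ and a matching Lipschitz estimate with factor $T^{\eta}$, where $\eta>0$ comes from the gain in the weights $t^{2/3}$, $s^{1/4}$ in \eqref{eq:space_solution_remainder_sep_eps} after composition with $\iI_\eps$. Choosing $T$ small depending only on the uniform norms of the data yields the fixed point uniformly in $\eps\in[0,1]$, and iteration extends the solution as long as the $\yY$-norm does not blow up, which in turn is controlled a priori by $\|\Ups_\eps\|_{\xX}$ plus the $h_\eps$-norm and data; this proves the first half of the theorem.

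For the convergence statement one sets $\bar v=v_\eps-v$, $\bar w=w_\eps-w$ and writes the difference system by subtracting \eqref{eq:fixed_pt_eps} at $\eps$ and $\eps=0$. Each term splits into a "Lipschitz" part, bounded by the same estimates as above applied to $(\bar v,\bar w)$ and giving a factor $T^{\eta}\|(\bar v,\bar w)\|_{\yY_{T,\eps}}$, and a "frozen" part in which only the stochastic/semigroup ingredients differ: $\Ups_\eps-\Ups$, $\lambda_\eps-\lambda$, $h_\eps-h$, the initial data, and the operators $e^{t(\lL_\eps-1)}-e^{t(\Delta-1)}$ and $\iI_\eps-\iI$. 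All quantities in the first list tend to $0$ in their respective norms by assumption; for the semigroup and integration operators, one invokes the quantitative convergence bound from Lemma~\ref{le:heat_regularisation} (the statement that $e^{t(\lL_\eps-1)}\to e^{t(\Delta-1)}$ as bounded operators on Besov spaces with a small loss of regularity, uniformly on compact time intervals bounded away from~$0$, and with the $\delta_0$-loss near~$0$ that the $\yY_{T,\eps}$ weight absorbs). One then absorbs the Lipschitz piece by choosing $T$ small (uniformly in small $\eps$), and obtains $\|(\bar v,\bar w)\|_{\yY_{T,\eps}}\le o_\eps(1)$; a finite iteration along the existence interval of $(v,w)$ extends this to the full time $T$. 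The main obstacle I foresee is not the algebra, which is exactly as in \cite{Phi43global,phi43_CC}, but the careful bookkeeping of the $\eps^{-3/2}V'$ remainder together with the $\delta_0$-weighted short-time estimates: one must check that each monomial of the Taylor remainder is both small in $\eps$ and tame enough near $t=0$ to be treated as a perturbation in $\yY_{T,\eps}$, and that the Lipschitz bound for this polynomial term in the difference $(\bar v,\bar w)$ preserves the same net $\eps$-gain.
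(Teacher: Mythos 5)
Your proposal is correct and follows essentially the same route as the paper: a contraction argument for the mild-solution map on the weighted spaces $\yY_{T,\eps}$ with uniform-in-$\eps$ Schauder/paraproduct/commutator bounds, the Taylor-remainder term $\eps^{-3/2}V'(\sqrt{\eps}\,\cdot\,;\sqrt{\eps}\,\cdot)$ treated as a small perturbation (the paper gets the factor $\eps^{1/4}$ via the mean value theorem in Lemma~\ref{le:remainder}, matching your monomial-by-monomial gain), and the convergence proved by splitting the difference into a Lipschitz part absorbed for small times and a ``frozen'' part controlled by the convergence of the inputs together with the semigroup difference bounds \eqref{eq:heat_regularisation_difference} and Lemma~\ref{le:heat_continuity_same_space}, followed by iteration up to time $T$. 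No gaps to report.
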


\begin{rmk} \label{rmk:initial}
	The solution $\Phi_\eps$ to \eqref{eq:main_Phi} can be written as $\Phi_\eps = \<1>_\eps - \lambda \<3'0>_\eps + v_\eps + w_\eps$. Hence, the requirement on the initial condition in Theorem~\ref{th:main} is that $\Phi_\eps(0,\cdot) - \<1>_\eps(0,\cdot)$ converges in $\bB^\kappa$ to $\Phi(0,\cdot) - \<1>(0,\cdot)$ for some function $\Phi(0,\cdot)$. Then, we have that $\Phi_\eps$ converges to the solution of the dynamical $\Phi^4_3(\lambda)$-equation with initial data $\Phi(0,\cdot)$. 
	
	Note that this puts restriction on the local behaviour of $\Phi_\eps(0,\cdot)$. Ideally we would like condition on the convergence of $\Phi_\eps(0,\cdot)$ itself without giving reference to $\<1>_\eps$. Then in order to extend local solution to longer time intervals, one necessarily needs to be able to treat initial data below $\bB^{-\frac{1}{2}-}$. But this will create a problem of non-integrable singularity (even for fixed $\eps$) for higher powers in $V'$ if the smoothing effect of $\lL_\eps$ is not strong enough. One way to circumvent this without putting further assumption on $\lL_\eps$ or $V$ is to set up a weighted space, separating small and large scale behaviours. We choose to put restrictions on the initial condition to avoid technical complications. 
\end{rmk}

\begin{rmk}
	The existence of solutions to \eqref{eq:fixed_pt_eps} has been proven in \cite[Theorem~2.1]{Phi43global} and \cite[Theorem~3.1]{phi43_CC}, at least for $\eps=0$. The existence of solutions for $\eps>0$ can be proven with essentially the same arguments, except that the operator $\Delta$ is replaced by $\lL_\eps$ (which satisfies all the necessary bounds), that the choice of exponents are slightly different, and that there are two additional terms: the small remainder $\eps^{-\frac{3}{2}} V'(\sqrt{\eps} \cdot; \sqrt{\eps} \cdot)$, and the one involving $h_\eps$ (or $h$). The setup here follows that in \cite{Phi43global}. In the proof below, we only give details for the small remainder term as well as terms involving commutators. The convergence of the solutions as $\eps \rightarrow 0$ employs additional bounds involving the difference of the heat semi-groups $e^{t(\lL_\eps-1)} - e^{t(\Delta-1)}$ which are provided in the appendix. 
\end{rmk}

\subsection{Some preliminary bounds}

We give some preliminary bounds that are needed in the proof of Theorem~\ref{th:fixed_pt_convergence}. We write $\yY_{T,\eps}$ for $\eps \in [0,1]$, with $\eps=0$ corresponding to $\yY_T$. Also recall that $\|f\|_{\cC_r^{\alpha}} = \sup_{r' \in [0,r]} \|f(r')\|_{\alpha}$. 

\begin{lem} \label{le:fixed_pt_commutator_heat}
	We have the bounds
	\begin{equation*}
	\big\| [\iI_\eps, \prec](u, \<2'>)(r) \big\|_{1+2\kappa} \lesssim r^{-\frac{1}{4}} \|\<2'>\|_{\cC_r^{-1-\kappa}} \|u\|_{\yY_{r,\eps}^{(1)}}\;, 
	\end{equation*}
	and
	\begin{equation*}
	\big\| [\iI_\eps, \prec](\<3'0>, \<2'>)(r) \big\|_{1+2\kappa} \lesssim \Big( r^{\frac{1}{4}-2\kappa} \|\<3'0>\|_{\cC_r^{\frac{1}{2}-\kappa}} + r^{\frac{1}{8}-\frac{3\kappa}{2}} \|\<3'0>\|_{\cC^{\frac{1}{8}}([0,r], \cC^{\frac{1}{4}-\kappa}(\T^3))} \Big) \|\<2'>\|_{\cC_r^{-1-\kappa}}\;.
	\end{equation*}
	Both are uniform in $\eps \in [0,1]$ and $r \in (0,1)$. As a consequence, we have
	\begin{equation*}
	\big\| \<2'>(r) \circ [\iI_\eps, \prec](u - \lambda \<3'0>, \<2'>)(r) \big\|_{\kappa} \lesssim r^{-\frac{1}{4}} \|\Ups\|_{\xX_r}^{2} \big( \|u\|_{\yY_{r,\eps}^{(1)}} + \|\Ups\|_{\xX_r} \big)\;.
	\end{equation*}
\end{lem}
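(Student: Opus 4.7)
All three bounds rest on the standard decomposition of a space--time commutator combined with uniform-in-$\eps$ heat-semigroup and paraproduct estimates. Recalling $\iI_\eps$ from~\eqref{eq:notation_op_I} and adding and subtracting $e^{(r-s)(\lL_\eps-1)}(f(r)\prec g(s))$, I would write
\begin{equation*}
[\iI_\eps, \prec](f,g)(r) = \int_0^r e^{(r-s)(\lL_\eps-1)}\bigl((f(s)-f(r)) \prec g(s)\bigr)\, ds + \int_0^r \bigl[e^{(r-s)(\lL_\eps-1)}, \prec\bigr](f(r), g(s))\, ds,
\end{equation*}
splitting off a \emph{time-increment piece} and a \emph{heat-commutator piece}. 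The plan is to bound each integrand in $\bB^{1+2\kappa}$ and then integrate in $s$.

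For the time-increment piece I apply Bony's estimate $\|\phi \prec g\|_{-1-\kappa} \lesssim \|\phi\|_{L^\infty}\|g\|_{-1-\kappa}$ together with the H\"older-in-time regularity of $f$. In bound (i), the definition of $\|\cdot\|_{\yY_{r,\eps}^{(1)}}$ provides $\|u(s)-u(r)\|_\kappa \lesssim s^{-1/4}|r-s|^{1/8}\|u\|_{\yY_{r,\eps}^{(1)}}$, combined with the embedding $\bB^\kappa \hookrightarrow L^\infty$. In bound (ii), the seminorm $\|\<3'0>\|_{\cC^{1/8}([0,r],\cC^{1/4-\kappa})}$ controls the $L^\infty$-increment of $\<3'0>$ by $|r-s|^{1/8}$ directly. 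The smoothing furnished by Lemma~\ref{le:heat_regularisation} then passes from $\bB^{-1-\kappa}$ to $\bB^{1+2\kappa}$ at the cost of $(r-s)^{-(1+3\kappa/2)}$, and a Beta-type integral $\int_0^r s^{-a}(r-s)^{-b}\, ds \lesssim r^{1-a-b}$ yields the advertised power of $r$: in (i) one picks up $r^{-1/8-3\kappa/2}$, which is stronger than $r^{-1/4}$, and in (ii) one picks up exactly $r^{1/8-3\kappa/2}$.

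For the heat-commutator piece I would invoke the uniform-in-$\eps$ version of the heat-semigroup/paraproduct commutator estimate, of the form $\|[e^{s(\lL_\eps-1)},\prec](f,g)\|_{1+2\kappa} \lesssim s^{-\theta}\|f\|_\alpha\|g\|_{-1-\kappa}$, which is the analogue of the classical Gubinelli--Imkeller--Perkowski bound extended to $\lL_\eps$ (provided in the appendix). For (ii) this is applied with $\alpha = 1/2-\kappa$, yielding the summand $r^{1/4-2\kappa}\|\<3'0>\|_{\cC_r^{1/2-\kappa}}\|\<2'>\|_{\cC_r^{-1-\kappa}}$ after integrating $s^{-\theta}$ from $0$ to $r$. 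For (i) it is applied after invoking the $r^{2/3}$-weighted bound $\|u(r)\|_{1-2\kappa} \lesssim r^{-2/3}\|u\|_{\yY_{r,\eps}^{(1)}}$ built into $\yY_{r,\eps}^{(1)}$; the resulting worst power of $r$ is bounded by $r^{-1/4}$. Bound (iii) then follows from (i) and (ii) combined with Bony's resonance estimate $\|F \circ G\|_{\alpha+\beta} \lesssim \|F\|_\alpha\|G\|_\beta$ (valid whenever $\alpha+\beta>0$), applied with $\alpha=1+2\kappa$, $\beta=-1-\kappa$, $F=[\iI_\eps,\prec](u-\lambda\<3'0>,\<2'>)(r)$ and $G=\<2'>(r)$; all stochastic factors are absorbed into $\|\Ups\|_{\xX_r}$, and the worst power of $r$ among the two bounds is $r^{-1/4}$.

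\textbf{Main obstacle.} The genuinely non-routine input is the uniform-in-$\eps$ heat regularisation bound and the heat-semigroup/paraproduct commutator estimate for $[e^{t(\lL_\eps-1)},\prec]$, since $\lL_\eps$ is merely a controlled perturbation of the Laplacian governed by Assumption~\ref{as:Q} rather than the Laplacian itself; once these two ingredients are available from the appendix, everything else reduces to standard paracontrolled bookkeeping.
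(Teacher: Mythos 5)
Your proof is correct and follows essentially the same route as the paper: the same add-and-subtract splitting of $[\iI_\eps,\prec]$ into a heat-semigroup/paraproduct commutator piece and a time-increment piece, estimated via the uniform-in-$\eps$ commutator bound of Proposition~\ref{pr:heat_commutator}, the smoothing of Lemma~\ref{le:heat_regularisation}, Bony's estimates, and the weighted norms built into $\yY_{r,\eps}^{(1)}$ and $\xX_r$, with the same Beta-integral bookkeeping (your exponents $r^{-1/8-3\kappa/2}$, $r^{-1/6-5\kappa/2}$, $r^{1/4-2\kappa}$, $r^{1/8-3\kappa/2}$ match the paper's up to the harmless $\delta_0$-loss, and all are absorbed into $r^{-1/4}$). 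The only immaterial difference is that you freeze the first argument at the final time $r$ in the commutator term and let the semigroup act on the whole paraproduct in the increment term, whereas the paper evaluates the commutator at the integration time $r'$ and pairs the increment with $e^{(r-r')(\lL_\eps-1)}\<2'>(r')$; the resulting powers of $r$ coincide.
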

\begin{proof}
	It suffices to prove the first two bounds. The third follows from the first two and the estimate of the resonance product in Proposition~\ref{pr:Bony_estimates}. 
	
	For the first, we have
	\begin{equation} \label{eq:commutator_split}
	\begin{split}
	[\iI_\eps, \prec](u,\<2'>)(r) = &\int_{0}^{r} \big[ e^{(r-r')(\lL_\eps-1)}, \prec \big] \big( u(r'), \<2'>(r') \big) {\rm d}r'\\
	&+ \int_{0}^{r} \big( u(r') - u(r) \big) \prec \Big( e^{(r-r')(\lL_\eps-1)} \<2'>(r') \Big) {\rm d}r'\;.
	\end{split}
	\end{equation}
	By the commutator estimate for the heat kernel \eqref{eq:heat_commutator} and the definition of $\yY^{(1)}$ in \eqref{eq:space_solution_remainder_sep_eps}, we can control the first term on the right hand side by
	\begin{equation*}
	\begin{split}
	&\phantom{111}\int_{0}^{r} \big\| \big[ e^{(r-r')(\lL_\eps-1)}, \prec \big] \big( u(r'), \<2'>(r') \big)  \big\|_{1+2\kappa} {\rm d}r'\\
	&\lesssim \int_{0}^{r} (r-r')^{-\frac{1+5\kappa+\delta_0}{2}} \|u(r')\|_{1-2\kappa} \|\<2'>(r')\|_{-1-\kappa} {\rm d}r'\\
	&\lesssim \bigg( \int_{0}^{r} (r-r')^{-\frac{1+5\kappa+\delta_0}{2}} (r')^{-\frac{2}{3}} {\rm d}r' \bigg) \|\<2'>\|_{\cC_r^{-1-\kappa}} \|u\|_{\yY_{r}^{(1)}}\\
	&\lesssim r^{-\frac{1}{6}-\frac{5\kappa+\delta_0}{2}} \|\<2'>\|_{\cC_r^{-1-\kappa}} \|u\|_{\yY_{r,\eps}^{(1)}}\;.
	\end{split}
	\end{equation*}
	As for the second term on the right hand side of \eqref{eq:commutator_split}, using Proposition~\ref{pr:Bony_estimates} and Lemma~\ref{le:heat_regularisation}, we have
	\begin{equation*}
	\begin{split}
	&\phantom{111}\int_{0}^{r} \Big\| \big( u(r') - u(r) \big) \prec \Big( e^{(r-r')(\lL_\eps-1)} \<2'>(r') \Big) \Big\|_{1+2\kappa} {\rm d}r'\\
	&\lesssim \int_{0}^{r} \|u(r) - u(r')\|_{\kappa} (r-r')^{-1-\frac{3\kappa+\delta_0}{2}} \|\<2'>(r')\|_{-1-\kappa} {\rm d}r'\\
	&\lesssim \bigg(\int_{0}^{r} (r-r')^{-\frac{7}{8}-\frac{3\kappa+\delta_0}{2}} (r')^{-\frac{1}{4}} {\rm d}r' \bigg) \|\<2'>\|_{\cC_r^{-1-\kappa}} \|u\|_{\yY_{r}^{(1)}}\\
	&\lesssim r^{-\frac{1}{8}-\frac{3\kappa+\delta_0}{2}} \|\<2'>\|_{\cC_r^{-1-\kappa}} \|u\|_{\yY_{r,\eps}^{(1)}}\;. 
	\end{split}
	\end{equation*}
	Since $\delta_0 < \frac{\kappa}{n}$ and $\kappa$ is sufficiently small, we can enlarge both bounds to $r^{-\frac{1}{4}}$. This completes the proof of the first bound. 
	
	The proof for the second one is the same -- one splits the quantity into two sums as above, and uses the $\cC_r^{\frac{1}{2}-\kappa}$ and $\cC_{r}^{\frac{1}{8}, \frac{1}{4}-\kappa}$ norms of $\<3'0>$ respectively. Finally, one combines the two bounds with Proposition~\ref{pr:Bony_estimates} and the fact that $r \leq 1$ to conclude the lemma. 
\end{proof}

\begin{rmk}
	The commutator estimate for $[\iI_\eps, \prec]$ (and for $[\iI_\eps-\iI, \prec]$) is the only place that requires H\"older-in-time continuity of the process $\<3'0>$ and the solution $(v,w)$. 
\end{rmk}

We recall at this point that $V$ is an even polynomial of degree $2n$. 

\begin{lem} \label{le:remainder}
	We have
	\begin{equation} \label{eq:remainder_bd}
	\eps^{-\frac{3}{2}} \| V'\big( \sqrt{\eps} \psi_\eps; \sqrt{\eps} f \big)\|_{L^{\infty}(\T^3)} \lesssim \eps^{\frac{1}{4}} \|f\| \Big( 1 + \eps^{\frac{1}{2}+\kappa} \|\psi_\eps\| + \|f\| \Big)^{2n-2}\;, 
	\end{equation}
	and 
	\begin{equation} \label{eq:remainder_contraction}
	\begin{split}
	&\phantom{111}\eps^{-\frac{3}{2}} \big\| V'\big(\sqrt{\eps} \psi_\eps; \sqrt{\eps} f \big) - V'\big(\sqrt{\eps} \psi_\eps; \sqrt{\eps} \bar{f} \big) \big\|_{L^{\infty}(\T^3)}\\
	&\lesssim \eps^{\frac{1}{4}} \|f-\bar{f}\| \Big( 1 + \eps^{\frac{1}{2}+\kappa} \|\psi_\eps\| + \|f\| + \|\bar{f}\| \Big)^{2n-2}\;, 
	\end{split}
	\end{equation}
	where all the norms on the right hand sides of the two bounds above are $L^{\infty}(\T^3)$-norms, and all functions are evaluated at a fixed time. Both bounds are uniform in $\eps \in (0,1)$ and in $\psi_\eps,f,\bar{f}\in L^{\infty}(\T^3)$, and the proportionality constants are also independent of the actual time. 
\end{lem}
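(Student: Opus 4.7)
The plan is to reduce both \eqref{eq:remainder_bd} and \eqref{eq:remainder_contraction} to pointwise polynomial estimates via Taylor's theorem, followed by elementary bookkeeping of powers of $\eps$ and the relevant sup-norms. Since $V'(x;y)$ defined in \eqref{eq:V_remainder} is the third-order Taylor remainder of $V'(x+\cdot)$ about $0$, the integral form gives
\begin{equation*}
V'(x;y) = \frac{1}{3!}\int_{0}^{y}(y-s)^{3}V^{(5)}(x+s)\,{\rm d}s.
\end{equation*}
In particular, if $n=2$ then $V$ has degree $4$, $V^{(5)}\equiv 0$, and both bounds are trivial. For $n\geq 3$, the polynomial $V^{(5)}$ has degree $2n-5$, so $|V^{(5)}(z)|\leq C(1+|z|^{2n-5})$, and a pointwise estimate
\begin{equation*}
|V'(x;y)|\leq C|y|^{4}\bigl(1+(|x|+|y|)^{2n-5}\bigr)
\end{equation*}
follows. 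Substituting $x=\sqrt{\eps}\psi_\eps$, $y=\sqrt{\eps}f$, taking $L^{\infty}$ in space, and dividing by $\eps^{\frac{3}{2}}$ yields
\begin{equation*}
\eps^{-\frac{3}{2}}\|V'(\sqrt\eps\psi_\eps;\sqrt\eps f)\|_{\infty}
\leq C\eps^{\frac{1}{2}}\|f\|^{4}\bigl(1+\eps^{\frac{2n-5}{2}}(\|\psi_\eps\|+\|f\|)^{2n-5}\bigr).
\end{equation*}

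The remaining step is to dominate this by $\eps^{\frac{1}{4}}\|f\|(1+\eps^{\frac{1}{2}+\kappa}\|\psi_\eps\|+\|f\|)^{2n-2}$. After factoring out $\eps^{\frac{1}{4}}\|f\|$ the ``constant part'' $\eps^{\frac{1}{4}}\|f\|^{3}$ is absorbed by $(1+\|f\|)^{2n-2}$ using $\eps\leq 1$ and $2n-2\geq 3$. Expanding the cross-term by the binomial theorem, a typical monomial on the left has the form $\eps^{\frac{1}{4}+\frac{2n-5}{2}}\|\psi_\eps\|^{k}\|f\|^{2n-2-k}$ for $0\leq k\leq 2n-5$, and it is matched against the RHS term $\eps^{k(\frac{1}{2}+\kappa)}\|\psi_\eps\|^{k}\|f\|^{2n-2-k}$ obtained by selecting $(\eps^{\frac{1}{2}+\kappa}\|\psi_\eps\|)^{k}\|f\|^{2n-2-k}$ from $(1+\eps^{\frac{1}{2}+\kappa}\|\psi_\eps\|+\|f\|)^{2n-2}$. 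Since $\eps\leq 1$, the required inequality reduces to the exponent condition $\frac{1}{4}+\frac{2n-5}{2}\geq k(\frac{1}{2}+\kappa)$, whose worst case $k=2n-5$ is equivalent to $(2n-5)\kappa\leq\frac{1}{4}$; this is guaranteed by the standing smallness of $\kappa$ (which is free to be chosen depending on $n$).

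The contraction estimate \eqref{eq:remainder_contraction} follows along the very same lines, after writing
\begin{equation*}
V'(x;y)-V'(x;\bar y)=\int_{\bar y}^{y}\widetilde V(x;t)\,{\rm d}t,
\end{equation*}
where $\widetilde V(x;t):=V''(x+t)-V''(x)-V^{(3)}(x)t-\tfrac{1}{2}V^{(4)}(x)t^{2}$ is the third-order Taylor remainder of $V''$. Applying the same Taylor bound to $V''$ produces $|\widetilde V(x;t)|\leq C|t|^{3}(1+(|x|+|t|)^{2n-5})$, whence
\begin{equation*}
|V'(x;y)-V'(x;\bar y)|\leq C|y-\bar y|\,(|y|\vee|\bar y|)^{3}\bigl(1+(|x|+|y|\vee|\bar y|)^{2n-5}\bigr),
\end{equation*}
and the same bookkeeping delivers \eqref{eq:remainder_contraction}. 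There is no genuine obstacle in the argument: it is a careful combinatorial exercise whose only quantitative constraint is the smallness condition $(2n-5)\kappa\leq\frac{1}{4}$, which is innocuous.
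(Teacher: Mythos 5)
Your proposal is correct, and for the first bound \eqref{eq:remainder_bd} it is essentially the paper's argument: the paper invokes the mean value (Lagrange) form of the third-order Taylor remainder, writing $V'(\sqrt{\eps}\psi_\eps;\sqrt{\eps}f)=\tfrac{1}{24}V^{(5)}(\sqrt{\eps}\psi_\eps+\sqrt{\eps}g)(\sqrt{\eps}f)^4$ with $|g|\leq|f|$, while you use the integral form; after that, both proofs use the degree-$(2n-5)$ growth of $V^{(5)}$ and the same power counting, with the same harmless smallness constraint on $\kappa$ (the paper quotes $\kappa<\tfrac{1}{8n}$, you quote $(2n-5)\kappa\leq\tfrac14$; these are compatible, and $\kappa$ is indeed free to be taken small). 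Where you genuinely deviate is the contraction bound \eqref{eq:remainder_contraction}: the paper proceeds via the algebraic identity
\begin{equation*}
V'\big(\sqrt{\eps}\psi_\eps;\sqrt{\eps}f\big)-V'\big(\sqrt{\eps}\psi_\eps;\sqrt{\eps}\bar f\big)
= V'\big(\sqrt{\eps}(\psi_\eps+\bar f);\sqrt{\eps}(f-\bar f)\big)
+\sum_{\ell=1}^{3}\frac{(\sqrt{\eps}(f-\bar f))^{\ell}}{\ell!}\Big[V^{(\ell+1)}\big(\sqrt{\eps}(\psi_\eps+\bar f)\big)-\sum_{j=0}^{3-\ell}\frac{V^{(\ell+1+j)}(\sqrt{\eps}\psi_\eps)}{j!}(\sqrt{\eps}\bar f)^{j}\Big]
\end{equation*}
and then runs the same remainder estimate on each term, whereas you observe that $\partial_y V'(x;y)$ is itself the order-two Taylor remainder of $V''$ and integrate it between $\bar y$ and $y$. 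Your route is arguably cleaner (a single Lipschitz-type bound rather than re-expanding around the shifted base point $\sqrt{\eps}(\psi_\eps+\bar f)$), and it delivers the same estimate under the same condition on $\kappa$; your explicit remark that the $n=2$ case is trivial because $V^{(5)}\equiv 0$ is also consistent with the paper, which leaves this implicit.
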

\begin{proof}
	By the mean value theorem, there exists a $g$ with $0 \leq |g| \leq |f|$ such that
	\begin{equation*}
	V'(\sqrt{\eps} \psi_\eps; \sqrt{\eps} f) = \frac{1}{24} V^{(5)}(\sqrt{\eps} \psi_\eps + \sqrt{\eps} g) \cdot (\sqrt{\eps} f)^{4}\;. 
	\end{equation*}
	Since $V^{(5)}$ has polynomial growth or order at most $2n-5$, we have
	\begin{equation*}
	|V'(\sqrt{\eps} \psi_\eps; \sqrt{\eps}f)| \lesssim \eps^2 |f|^{4} \big( 1 + \sqrt{\eps} |\psi_\eps| + \sqrt{\eps} |g| \big)^{2n-5}\;. 
	\end{equation*}
	Using $|g| \leq |f|$ and that we can choose $\kappa$ sufficiently small ($\kappa < \frac{1}{8n}$ would be sufficient), we get
	\begin{equation*}
	\eps^{-\frac{3}{2}} |V'(\sqrt{\eps} \psi_\eps; \sqrt{\eps} f)| \lesssim \eps^{\frac{1}{4}} |f| \big( 1 + \eps^{\frac{1}{2}+\kappa} |\psi_\eps| + |f| \big)^{2n-2}\;. 
	\end{equation*}
	Taking $L^\infty$-norm in $[0,T] \times \T^3$ yields the desired bound \eqref{eq:remainder_bd}. As for \eqref{eq:remainder_contraction}, we notice the identity
	\begin{equation*}
	\begin{split}
	&\eps^{-\frac{3}{2}} \Big( V'\big(\sqrt{\eps} \psi_\eps; \sqrt{\eps} f\big) - V'\big( \sqrt{\eps} \psi_\eps; \sqrt{\eps} \bar{f} \big) \Big) = \eps^{-\frac{3}{2}} V'\big( \sqrt{\eps} (\psi_\eps + \bar{f}); \sqrt{\eps} (f-\bar{f}) \big)\\
	&+ \eps^{-\frac{3}{2}} \sum_{\ell=1}^{3} \frac{\big(\sqrt{\eps} (f-\bar{f})\big)^{\ell}}{\ell!} \Big[ V^{(\ell+1)}\big( \sqrt{\eps} \psi_\eps + \sqrt{\eps} \bar{f} \big) - \sum_{j=0}^{3-\ell} \frac{V^{(\ell+1+j)}\big(\sqrt{\eps} \psi_\eps \big)}{j!}  \cdot (\sqrt{\eps} \bar{f})^{j}  \Big]
	\end{split}\;, 
	\end{equation*}
	and the desired bound follows from the same argument as above. 
\end{proof}


\subsection{Proof of Theorem~\ref{th:fixed_pt_convergence}}

The statement (and hence the proof) consists of two parts: the existence of solutions $(v_\eps,w_\eps)$ for each $\eps \in [0,1]$, and the convergence of $(v_\eps,w_\eps)$ to $(v,w)$ as $\eps \rightarrow 0$. 

\begin{flushleft}
	\textit{Part 1. }
\end{flushleft}

Fix $\lambda \in \R$, $\Ups \in \xX$, $\psi_\eps \in L^{\infty}\big( [0,T] \times \T^3 \big)$, $h_\eps \in \cC((0,1]; \bB^{1+2\kappa})$ and $\big(v_\eps(0), w_\eps(0) \big) \in \bB^\kappa \times \bB^\kappa$ with
\begin{equation*}
\begin{split}
&\eps^{\frac{1}{2}+\kappa} \|\psi_\eps\|_{L^{\infty}([0,1] \times \T^3)} + \sup_{t \in [0,1]} \big( t^{\frac{1}{4}} \|h_\eps(t)\|_{1+2\kappa} \big) + \|\Ups\|_{\xX} \leq \kK\;,\\
&\phantom{1} \qquad \text{and} \qquad \|v_\eps(0)\|_{\kappa} + \|w_\eps(0)\|_{\kappa} \leq \mM\;. 
\end{split}
\end{equation*}
For $T, \eps \in [0,1]$, define the mild solution map $\Gamma_{T,\eps} = (\Gamma_{T,\eps}^{(1)}, \Gamma_{T,\eps}^{(2)})$ by
\begin{equation} \label{eq:fixed_pt_map}
\begin{split}
\Gamma_{T,\eps}^{(1)}(v,w)(t) = &e^{t(\lL_\eps-1)} v_\eps(0) - 3 \lambda \int_{0}^{t} e^{(t-r)(\lL_\eps-1)} \Big( \big( v(r) + w(r) - \lambda \<3'0>(r) \big) \prec \<2'>(r)  \Big) {\rm d}r\;,\\
\Gamma_{T,\eps}^{(2)}(v,w)(t) = &e^{t(\lL_\eps-1)} w_\eps(0) - 3 \lambda \int_{0}^{t} e^{(t-r)(\lL_\eps-1)} \Big( \<2'>(r) \circ \big( e^{r(\lL_\eps-1)} v_\eps(0) + w(r) \big) \Big) {\rm d}r\\
&+ \int_{0}^{t} e^{(t-r)(\lL_\eps-1)} G_\eps \big( \lambda, \Ups(r), v(r) + w(r) \big) {\rm d}r\;, 
\end{split}
\end{equation}
where we recall the symbols in the generic element $\Ups \in \xX$ from \eqref{eq:enhanced_noise}, and the expression of $G_\eps$ in \eqref{eq:G_eps} and \eqref{eq:G_0} for $\eps > 0$ and $\eps=0$ respectively. We need to show that for suitable $T$ and $\rR$ (depending only on $\lambda$, $\kK$ and $\mM$ but independent of $\eps$), $\Gamma_{T,\eps}$ is a contraction map from the ball in $\yY_T$ with radius $\rR$ into itself. 

\begin{flushleft}
	\textit{Step 1.}
\end{flushleft}

We first check $\Gamma_{T,\eps}$ maps the ball in $\yY_{T,\eps}$ with radius $\rR$ (centered at the origin) into itself. For notational simplicity, we write $u=v+w$. 

We give details for three terms appearing in the definition of $\Gamma_{T,\eps}^{(2)}$: the initial data term $e^{t(\lL_\eps-1)} w_\eps(0)$, the term involving the commutator $[\iI_\eps, \prec]$ and the remainder term $\eps^{-\frac{3}{2}} V'(\sqrt{\eps}\cdot; \sqrt{\eps}\cdot)$. The bounds for the other terms (including those appearing in the definition of $\Gamma_{T,\eps}^{(1)}$) can be obtained in the same way. 

For the initial data term, we have by Lemma~\ref{le:heat_regularisation}
\begin{equation*}
\big\|e^{t(\lL_\eps-1)} w_\eps(0) \big\|_{\gamma} \lesssim t^{-\frac{\gamma-\kappa}{2}} (1 + \eps^{\delta_0} t^{-\frac{\delta_0}{2}}) \|w_\eps(0)\|_{\kappa} \lesssim t^{-\frac{\gamma-\kappa}{2}} (1 + \eps^{\delta_0} t^{-\frac{\delta_0}{2}}) \mM\;.
\end{equation*}
Taking $\gamma = \kappa$ and $1+2\kappa$ respectively gives the corresponding bounds in spatial regularity. As for the term with temporal H\"older regularity, using the continuity estimate for the perturbed heat semigroups in Lemma~\ref{le:heat_continuity} (with $\theta=\frac{1}{4}$ and $\gamma=\alpha=\kappa$), we have
\begin{equation*}
\big\| \big(e^{t(\lL_\eps-1)} - e^{s(\lL_\eps-1)}\big)w_\eps(0) \big\|_{\kappa} \lesssim (t-s)^{\frac{1}{8}} s^{-\frac{1}{8}-\frac{\delta_0}{2}} \|w_\eps(0)\|_{\kappa}\;.
\end{equation*}
This shows that for the term with the initial data, we have
\begin{equation} \label{eq:map_GammaT2_initial_bd}
\big\| e^{t(\lL_\eps-1)} w_\eps(0) \big\|_{\yY_{T,\eps}^{(2)}} \lesssim \mM\;.
\end{equation}
We now turn to $G_\eps$, focusing on the commutator term $[\iI_\eps, \prec]$ and the remainder $\eps^{-\frac{3}{2}} V'(\sqrt{\eps}\cdot; \sqrt{\eps} \cdot)$. Recall that we write $u=v+w$. For the commutator term, we also write $f_\eps = \<2'> \circ [\iI_\eps, \prec](u - \lambda \<3'0>,\<2'>)$ for simplicity. By Lemma~\ref{le:fixed_pt_commutator_heat}, we have
\begin{equation*}
\|f_\eps(r)\|_{\kappa} \lesssim r^{-\frac{1}{4}} \kK^2 \big( \|v+w\|_{\yY_{T,\eps}^{(1)}} + \kK \big) \lesssim r^{-\frac{1}{4}} \kK^2 (\rR + \kK)\;, 
\end{equation*}
where the last inequality comes from  $\|w\|_{\yY_{T,\eps}^{(1)}} \leq \|w\|_{\yY_{T,\eps}^{(2)}}$ by definition of the norms. Hence, we have
\begin{equation*}
\Big\| \int_{0}^{t} e^{(t-r)(\lL_\eps-1)} f_\eps(r) {\rm d}r \Big\|_{\gamma} \lesssim \Big( \int_{0}^{t} (t-r)^{-\frac{\gamma-\kappa+\delta_0}{2}} r^{-\frac{1}{4}} {\rm d}r \Big) \kK^2 (\kK+\rR) \lesssim t^{\frac{3}{4}-\frac{\gamma-\kappa+\delta_0}{2}} \kK^2 (\kK+\rR)\;. 
\end{equation*}
Taking $\gamma = \kappa$ and $1+2\kappa$ respectively gives the bounds for spatial regularity with a factor bounded by $T^{\frac{1}{8}}$ provided $\kappa$ and $\delta_0$ are small enough. As for the temporal regularity, we write the difference between times $s$ and $t$ as
\begin{equation*}
\int_{0}^{s} \big( e^{(t-r)(\lL_\eps-1)} - e^{(s-r)(\lL_\eps-1)} \big) f_\eps(r) {\rm d}r + \int_{s}^{t} e^{(t-r)(\lL_\eps-1)} f_\eps(r) {\rm d}r\;, 
\end{equation*}
and we want to control the $\bB^{\kappa}$ norm of these two quantities. For the first term, using Lemma~\ref{le:heat_continuity} with $\theta = \frac{1}{4}$ and $\gamma=\alpha=\kappa$ and the bound on $\|f_\eps(r)\|_{\kappa}$, we have
\begin{equation*}
\begin{split}
\Big\| \int_{0}^{s} \big( e^{(t-r)(\lL_\eps-1)} - e^{(s-r)(\lL_\eps-1)} \big) f_\eps(r) {\rm d}r \Big\|_{\kappa} &\lesssim (t-s)^{\frac{1}{8}} \Big( \int_{0}^{s} (s-r)^{-\frac{1}{8}-\frac{\delta_0}{2}} r^{-\frac{1}{4}} {\rm d}r \Big) \kK^2 (\kK+\rR)\\
&\lesssim s^{-\frac{1}{4}} (t-s)^{\frac{1}{8}} s^{\frac{7}{8}-\frac{\delta_0}{2}} \kK^2 (\kK + \rR)\;.
\end{split}
\end{equation*}
For the second one, by Lemma~\ref{le:heat_regularisation}, we can control its $\bB^\kappa$-norm by
\begin{equation*}
\begin{split}
\int_{s}^{t} (t-r)^{-\frac{\delta_0}{2}} \|f_\eps(r)\|_{\kappa} {\rm d}r &\lesssim \Big( \int_{s}^{t} (t-r)^{-\frac{\delta_0}{2}} r^{-\frac{1}{4}} {\rm d}r \Big) \kK^2 (\kK+\rR)\\ &\lesssim s^{-\frac{1}{4}} (t-s)^{\frac{1}{8}} t^{\frac{7}{8}-\frac{\delta_0}{2}} \kK^2 (\kK+\rR)\;.
\end{split}
\end{equation*}
Hence, we obtain
\begin{equation} \label{eq:map_GammaT2_commutator}
\Big\| \int_{0}^{t} e^{(t-r)(\lL_\eps-1)} \Big( \<2'>(r) \circ [\iI_\eps,\prec](u,\<2'>)(r) \Big) {\rm d}r \Big\|_{\yY_{T,\eps}^{(2)}} \lesssim T^{\frac{1}{8}} \kK^2 (\kK+\rR)\;. 
\end{equation}
We now turn to the remainder term $V'$. For simplicity, we write
\begin{equation*}
F_\eps := \eps^{-\frac{3}{2}} V'\big( \sqrt{\eps} \psi_\eps; \sqrt{\eps} (u - \lambda \<3'0>) \big)\;. 
\end{equation*}
Lemma~\ref{le:remainder} implies
\begin{equation*}
\|F_\eps(r)\|_{L^{\infty}(\T^3)} \lesssim \eps^{\frac{1}{4}} r^{-\frac{(2n-1)\delta_0}{2}} (1 + \kK + \rR)^{2n-1}. 
\end{equation*}
We then have
\begin{equation*}
\Big\| \int_{0}^{t} e^{(t-r)(\lL_\eps-1)} F_\eps(r) {\rm d}r \Big\|_{\gamma} \lesssim \int_{0}^{t} (t-r)^{-\frac{\gamma+\delta_0}{2}} \|F_\eps(r)\|_{L^{\infty}(\T^3)} {\rm d}r \lesssim \eps^{\frac{1}{4}} t^{1-\frac{\gamma+2n\delta_0}{2}} (1+\kK+\rR)^{2n-1}\;,
\end{equation*}
where the first inequality follows from Lemma~\ref{le:heat_regularisation} and that $\|\cdot\|_{0} \lesssim \|\cdot\|_{L^{\infty}}$, which is the content of Lemma~\ref{le:embedding} and the second one is valid for $\gamma \in (0,2)$. Again, taking $\gamma = \kappa$ and $1+2\kappa$ gives the respective bounds for the two different spatial regularities. 

As for the time difference, similar as before, we write
\begin{equation*}
\begin{split}
&\phantom{111}\int_{0}^{t} e^{(t-r)(\lL_\eps-1)} F_\eps(r) {\rm d}r - \int_{0}^{s} e^{(s-r)(\lL_\eps-1)} F_\eps(r) {\rm d}r\\
&= \int_{0}^{s} \big( e^{(t-r)(\lL_\eps-1)} - e^{(s-r)(\lL_\eps-1)} \big) F_\eps(r) {\rm d}r + \int_{s}^{t} e^{(t-r)(\lL_\eps-1)} F_\eps(r) {\rm d}r\;. 
\end{split}
\end{equation*}
For the first term, using Lemma~\ref{le:heat_continuity} with $\theta = \frac{1}{4}$, $\gamma=\kappa$, $\alpha=0$ and that $\|\cdot\|_{0} \lesssim \|\cdot\|_{L^\infty}$, we can control it by
\begin{equation*}
\begin{split}
\Big\| \int_{0}^{s} \big( e^{(t-r)(\lL_\eps-1)} - e^{(s-r)(\lL_\eps-1)} \big) F_\eps(r) {\rm d}r \Big\|_{\kappa} &\lesssim (t-s)^{\frac{1}{8}} \int_{0}^{s} (s-r)^{-\frac{1}{8}-\frac{\kappa+\delta_0}{2}} \|F_\eps(r)\|_{L^\infty(\T^3)} {\rm d}r\\
&\lesssim \eps^{\frac{1}{4}}  s^{-\frac{1}{4}} (t-s)^{\frac{1}{8}} s^{\frac{9}{8}-\frac{\kappa}{2}-n\delta_0} (1+\kK+\rR)^{2n-1}\;. 
\end{split}
\end{equation*}
For the second one, we have
\begin{equation*}
\begin{split}
\int_{s}^{t} \big\| e^{(t-r)(\lL_\eps-1)} F_\eps(r) \big\|_{\kappa} {\rm d}r &\lesssim \int_{s}^{t} (t-r)^{-\frac{\kappa+\delta_0}{2}} \|F_\eps(r)\|_{L^\infty(\T^3)} {\rm d}r\\
&\lesssim \eps^{\frac{1}{4}} s^{-\frac{1}{4}} (t-s)^{1-\frac{\kappa+\delta_0}{2}} t^{\frac{1}{4}-\frac{(2n-1)\delta_0}{2}} (1+\kK+\rR)^{2n-1}\;. 
\end{split}
\end{equation*}
Hence, for $\kappa$ (and $\delta_0$) small enough we obtain
\begin{equation*}
\eps^{-\frac{3}{2}} \Big\| \int_{0}^{t} e^{(t-r)(\lL_\eps-1)} V'\big( \sqrt{\eps} \psi_\eps(r); \sqrt{\eps} (u(r) - \lambda \<3'0>(r)) \big) {\rm d}r \Big\|_{\yY_{T,\eps}^{(2)}} \lesssim \eps^{\frac{1}{4}} T^{\frac{1}{6}} (1+\kK+\rR)^{2n-1}\;. 
\end{equation*}
The bounds for $w \circ \<2'>$ and the nonlinear terms $F_j u^j$ are treated in detail in \cite{Phi43global}, and we omit the details here. Overall, if $\|(v,w)\|_{\yY_{T,\eps}} \leq \rR$, we deduce that
\begin{equation*}
\|\Gamma_{T,\eps}(v,w)\|_{\yY_{T,\eps}} \leq C \Big( \mM + T^{\theta} (1 + \mM + \kK + \rR)^{2n+1} \Big)
\end{equation*}
for some $\theta>0$ universal, and the constant $C$ depends on $\lambda$ only. Hence, if we take $\rR > 2 C \mM$ and $T>0$ sufficiently small such that
\begin{equation*}
C \Big( \mM + T^{\theta} (1 + \mM + \kK + \rR)^{2n+1} \Big) \leq \frac{\rR}{2}\;, 
\end{equation*}
we see $\Gamma_{T,\eps}$ maps the ball in $\yY_{T,\eps}$ with radius $\rR$ into the smaller ball in $\yY_T$ with radius $\frac{\rR}{2}$. Also, the local existence time $T$ is independent of $\eps$ since both $\theta$ and $C$ are. 

\begin{flushleft}
	\textit{Step 2. }
\end{flushleft}

We now show that $\Gamma_{T,\eps}$ is a contraction map between the above mentioned space for sufficiently small $T$ (independent of $\eps$). We need to get a bound for $\big\| \Gamma_{T,\eps}(v,w) - \Gamma_{T,\eps}(\bar{v},\bar{w}) \big\|_{\yY_{T,\eps}}$. Most of the terms in $\Gamma_{T,\eps}$ are ``constant'' or linear, in which case the quantities either completely cancel out, or the same bound as in Step 1 holds by replacing $\rR$ with $\|(v,w) - (\bar{v}-\bar{w})\|_{\yY_{T,\eps}}$. The only nonlinear terms are $F_j u^j$ for $j=2,3$ and the remainder $\eps^{-\frac{3}{2}} V'(\sqrt{\eps} \cdot; \sqrt{\eps} \cdot)$. In these two cases, one replaces one factor of $\rR$ by $\|(v,w) - (\bar{v}-\bar{w})\|_{\yY_{T,\eps}}$ (see Lemma~\ref{le:remainder} for example). Hence, we obtain the bound
\begin{equation*}
\big\| \Gamma_{T,\eps}(v,w) - \Gamma_{T,\eps}(\bar{v},\bar{w}) \big\|_{\yY_{T,\eps}} \lesssim T^{\theta} \|(v,w) - (\bar{v},\bar{w})\|_{\yY_{T,\eps}} \big( 1 + \kK + \rR \big)^{2n}\;
\end{equation*}
for some $\theta>0$ independent of $\eps$. One then deduces that for every $\eps \in [0,1]$, one can choose $T_\eps>0$ sufficiently small so that $\Gamma_{T_\eps,\eps}$ gives a contraction in a bounded ball in $\yY_{T_\eps}$. In addition, the local existence time $T_\eps$ is uniform in $\eps$ (bounded away from $0$) since the proportionality constant in the above bound is. 

\begin{flushleft}
	\textit{Part 2. }
\end{flushleft}

We now turn to the second part of the theorem, namely $\|(v_\eps, w_\eps) - (v,w)\|_{\yY_{T,\eps}}$ converging to $0$, where $T>0$ is the time up to which $(v,w)$ is defined. By the previous part, we know that there exists $0<S<T$ such that for all sufficiently small $\eps$, the solution $(v_\eps, w_\eps)$ to \eqref{eq:fixed_pt_eps} is defined in $\yY_{S,\eps}$. 

The difference $(v_\eps,w_\eps) - (v,w)$ is a linear combination of the terms of the form
\begin{equation*}
\int_{0}^{t} e^{(t-r)(\lL_\eps-1)} f_\eps(r) {\rm d}r - \int_{0}^{t} e^{(t-r)(\Delta-1)} f(r) {\rm d}r\;, 
\end{equation*}
where $f_\eps$ and $f$ come from the right hand sides of the equations, and can also depend on $(v_\eps,w_\eps)$ and $(v,w)$. The difference can be split into
\begin{equation} \label{eq:pde_convergence_split}
\int_{0}^{t} e^{(t-r)(\lL_\eps-1)} \big( f_\eps(r) - f(r) \big) {\rm d}r + \int_{0}^{t} \Big( e^{(t-r)(\lL_\eps-1)} - e^{(t-r)(\Delta-1)} \Big) f(r) {\rm d}r\;. 
\end{equation}
By invoking the above bounds in the fixed point map as well as the bound for the difference of the kernels $e^{(t-r)(\lL_\eps-1)} - e^{(t-r)(\Delta-1)}$ (see the bound \eqref{eq:heat_regularisation_difference} in Lemma~\ref{le:heat_regularisation}), we can obtain the bound
\begin{equation} \label{eq:pde_convergence_bound}
\begin{split}
\big\| (&v_\eps, w_\eps) - (v,w) \big\|_{\yY_{S,\eps}} \lesssim (S+\eps)^{\theta} \big\| (v_\eps,w_\eps) - (v,w) \big\|_{\yY_{S,\eps}}\\
&+ \|\Ups_\eps - \Ups\|_{\xX} +  |\lambda_\eps - \lambda| + \|v_\eps(0)-v(0)\|_{\kappa} + \|w_\eps(0)-w(0)\|_{\kappa} + (\eps T)^{\theta}\\
&+\big\| \big( e^{t(\lL_\eps-1)} - e^{t(\Delta-1)}  \big) v(0) \big\|_{\cC_{S,\eps}^\kappa} + \big\| \big( e^{t(\lL_\eps-1)} - e^{t(\Delta-1)} \big) w(0) \big\|_{\cC_{S,\eps}^\kappa}\;, 
\end{split}
\end{equation}
for some $\theta>0$, where we have written $\cC_{S,\eps}^{\kappa}$ as a shorthand for continuous evolution in $\bB^{\kappa}$ such that the part $t \in [0,\eps^2]$ is weighted by $(\sqrt{t}/\eps)^{\delta_0}$ and the rest is taken in the supremum norm in $t \in [\eps^2, S]$. The proportionality constant above is independent of $\eps$ (but depends on the size $\mM$ of the limiting solution $(v,w)$ and the size $\kK$ of the external inputs). Here, the first five terms on the right hand side of \eqref{eq:pde_convergence_bound} come from the estimates of the first term in \eqref{eq:pde_convergence_split} (with $f_\eps$ being $G_\eps$ and polynomials of solutions), and the last three terms come from the estimates for the second term in \eqref{eq:pde_convergence_split}. We do not have a positive power of $\eps$ in the last two since we are measuring those quantities in the same space as the initial data, but they still vanish as $\eps \rightarrow 0$ (Lemma\ref{le:heat_continuity_same_space}). 

If $S>0$ is sufficiently small (still uniform in $\eps \leq \eps_0$ for some fixed small $\eps_0$), we can absorb the first term on the right hand side of \eqref{eq:pde_convergence_split} into its left hand side to obtain
\begin{equation} \label{eq:pde_iterate}
\begin{split}
\big\| (v_\eps,w_\eps) - &(v,w) \big\|_{\yY_{S,\eps}} \lesssim \|\Ups_\eps - \Ups\|_{\xX} + |\lambda_\eps - \lambda| + \|v_\eps(0) - v(0)\|_{\kappa} + \|w_\eps(0) - w(0)\|_{\kappa}\\
&+ (\eps T)^{\theta} + \big\| \big( e^{t(\lL_\eps-1)} - e^{t(\Delta-1)}  \big) v(0) \big\|_{\cC_{S,\eps}^\kappa} + \big\| \big( e^{t(\lL_\eps-1)} - e^{t(\Delta-1)} \big) w(0) \big\|_{\cC_{S,\eps}^\kappa}\;.
\end{split}
\end{equation}
All the terms on the right hand side above vanish as $\eps \rightarrow 0$ (the first four are by assumption, while the last two follow from Lemma~\ref{le:heat_continuity_same_space}). In particular, we will have $S>\eps^2$ and that
\begin{equation*}
\|v_\eps(S)\|_{\kappa} + \|w_\eps(S)\|_{\kappa} \leq \sup_{t \in [0,T]} \Big( \|v(t)\|_{\kappa} + \|w(t)\|_{\kappa} \Big) + 1 =: \mM\;
\end{equation*}
if $\eps$ is sufficiently small. This enables us to iterate the bound \eqref{eq:pde_iterate} up to time $T$ and thus complete the proof of the theorem.

\section{Convergence of the stochastic objects}
\label{sec:main_convergence}

In this section, we show that the assumptions in Theorem~\ref{th:fixed_pt_convergence} on the convergences of the external inputs ($\Ups_\eps$, $\psi_\eps$ and $h_\eps$) are indeed true if they are for the stochastic objects as defined in Section~\ref{sec:stochastic_objects}. Note that in this section we are only using the first three conditions in Assumption~\ref{as:Q}, and no control on the derivative of $\qQ$ is assumed. The main statement is Theorem~\ref{th:main_conv_stochastic_obj} below.

\subsection{The main convergence theorem and a convergence criterion}
\label{sec:main_convergence_statement}

For $\eps \in (0,1)$, we define
\begin{equation*} 
\Ups_\eps := \big(\; \<0'>_\eps, \<1'>_\eps, \<2'>_\eps, \<3'0>_\eps, \<3'1'>_\eps, \<2'2'>_\eps, \<3'2'>_\eps \;\big)\;, 
\end{equation*}
where components of $\Ups_\eps$ are the stochastic objects defined in \eqref{eq:noises_V}, \eqref{eq:noises_integration} and \eqref{eq:noises_2nd}. Also let
\begin{equation} \label{eq:Ups_standard}
\Ups := \big(\; 1, \; \<1>, \; \<2>, \; \<30>, \; \<31>, \; \<22>, \; \<32> \;\big)\;, 
\end{equation}
where the components (except $1$) are the standard $\Phi^4_3$ stochastic objects described in Appendix~\ref{app:stochastic}. $\Ups_\eps$ and $\Ups$ are defined on the same probability space (that is, constructed from the same space-time white noise $\xi$). We emphasize that in this section these symbols do mean concrete stochastic processes rather than abstract placeholders representing generic distributions as in Section~\ref{sec:fixed_point}.


We also fix a sufficiently small $\kappa>0$, and recall the norm $\xX$ from \eqref{eq:norm_enhanced_noise}. The main convergence theorem for these stochastic objects is the following. 

\begin{thm} \label{th:main_conv_stochastic_obj}
	There exists $\delta>0$ (depending on $\kappa$) such that for every $p \geq 1$, we have the bound
	\begin{equation*}
	\E \|\Ups_\eps\|_{\xX}^{p} \lesssim_p 1\;, \qquad \lim_{\eps\to 0}\E \|\Ups_\eps - \Ups\|_{\xX}^{p} =0\;.
	\end{equation*}
	We also have
	\begin{equation*}
	\eps^{\kappa p} \E \|\<1>_\eps\|_{L^{\infty}([0,T] \times \T^3)}^{p} \lesssim_p\; \eps^{\frac{\delta p}{2}}\;, \, \E \sup_{t \in [0,1]} \Big( t^{\frac{1}{4}} \|e^{t(\lL_\eps-1)} \<2'0>_\eps(0) - e^{t(\Delta-1)} \<20>(0) \|_{1+2\kappa} \Big)^{p} \lesssim_p \eps^{\delta p}\;.
	\end{equation*}
	The proportionality constants are independent of $\eps$. 
\end{thm}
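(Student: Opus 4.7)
The plan is to use the standard Wiener chaos strategy adapted to the $\lL_\eps$-setting. Each process in $\Ups_\eps$ is a polynomial functional of the underlying space-time white noise $\xi$, so it admits a finite Wiener chaos expansion $\tau_\eps = \sum_n I_n(f^{(n)}_\eps)$ with deterministic kernels $f^{(n)}_\eps$. By hypercontractivity, all $L^p$-norms on a fixed chaos are equivalent to the $L^2$-norm, so the bound $\E\|\Ups_\eps\|_\xX^p \lesssim 1$ reduces to a uniform-in-$\eps$ $L^2$ bound on each chaos component in the corresponding Besov space. Using stationarity and the Littlewood–Paley characterisation of $\bB^\alpha$, this further reduces to the pointwise Fourier estimate $\E|\widehat{\tau}_\eps(k)|^2 \lesssim \scal{k}_\eps^{-2\alpha_\tau - 3}$, with $\alpha_\tau$ as in Table~\ref{table:noise}. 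The only inputs from Assumption~\ref{as:Q} used here are (1) and (2)–(3): item (1) gives $\qQ(z) \geq c z^2$ for $|z| \le 1$, so $\scal{k}_\eps^{-2} \lesssim \scal{k}^{-2}$ at low frequencies, and items (2)–(3) together give $\scal{k}_\eps^{-2} \lesssim \scal{k}^{-2-\eta}$ at high frequencies, which is precisely the strict subcriticality needed for $\Phi^4_3$-type power counting.

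The chaos expansions of the $V$-dependent objects $\<0'>_\eps,\<1'>_\eps,\<2'>_\eps,\<3'>_\eps$ are obtained by expanding $V^{(k)}(\sqrt{\eps}\<1>_\eps)$ in Hermite polynomials with respect to the pointwise Gaussian law $\nN(0,\sigma_\eps^2)$ of $\sqrt{\eps}\<1>_\eps$ (where $\sigma_\eps^2 := \E(\sqrt{\eps}\<1>_\eps)^2 \to \sigma^2$): this yields
\begin{equation*}
  V^{(k)}(\sqrt{\eps}\<1>_\eps) = \sum_{j \geq 0} \frac{\eps^{j/2}}{j!}\,\E\big[V^{(k+j)}(\nN(0,\sigma_\eps^2))\big]\;\<1>_\eps^{\diamond j}\;,
\end{equation*}
where $\<1>_\eps^{\diamond j}$ is the $j$-th Wick power with respect to the covariance of $\<1>_\eps$. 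The constants $C_\eps^{(1)}$ in \eqref{eq:C1} are chosen precisely to absorb the zeroth chaos contribution of $V''$ into the mean, so that $\<3'>_\eps$ has no zeroth or first Wick correction to $\lambda\<1>_\eps^{\diamond 3}$ beyond what is already built-in. The key point is that the leading ($j$-th) chaos of $\<k'>_\eps$ converges to $\lambda \<1>^{\diamond j}$ times a combinatorial coefficient (agreeing with the corresponding $\Phi^4_3$ object in Table~\ref{table:noise}), while the ``non-leading'' Wick terms are suppressed by explicit powers $\eps^{j/2}$.

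The derived objects $\<3'0>_\eps,\<3'1'>_\eps,\<2'2'>_\eps,\<3'2'>_\eps$ are obtained by convolution with the stationary heat kernel and by resonance products. For each, one computes the chaos decomposition as a sum over Feynman diagrams with edges weighted by $\scal{k}_\eps^{-2}$-type propagators; the subtractions by $C_\eps^{(2)}, C_\eps^{(3)}$ in \eqref{eq:noises_2nd} cancel exactly the divergent ``tadpole'' (single-loop) diagrams, and the remaining diagrams are integrable by the power counting enabled by the first paragraph. For the convergence $\Ups_\eps \to \Ups$, each propagator $\scal{k}_\eps^{-2}$ is replaced by either $\scal{k}^{-2}$ or the difference $\scal{k}_\eps^{-2} - \scal{k}^{-2}$, and the identity
\begin{equation*}
  \frac{1}{\scal{k}_\eps^2} - \frac{1}{\scal{k}^2} = \frac{(2\pi|k|)^2 - \eps^{-2}\qQ(2\pi\eps|k|)}{\scal{k}_\eps^2 \scal{k}^2}
\end{equation*}
is of size $O(\eps^2|k|^2)$ on $|k| \ll \eps^{-1}$ (by \eqref{eq:Q_origin}) and $O(\scal{k}^{-2})$ elsewhere; this yields a quantitative gain of $\eps^\delta$ at the price of a slight regularity loss, sufficient to prove convergence in the required Besov spaces.

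The two auxiliary bounds are handled as follows. The $L^\infty$ bound on $\<1>_\eps$ follows from the pointwise variance $\E\<1>_\eps(t,x)^2 = \frac{1}{2\eps}\int_{\R^3}\qQ(2\pi\theta)^{-1}{\rm d}\theta + \oO(1)$, Gaussian concentration (Borell–TIS) over the compact domain $[0,T]\times\T^3$, and a chaining/entropy argument on the mean using the Hölder regularity of $\sqrt{\eps}\<1>_\eps$ (which gives the $\sqrt{\log}$ factor absorbed into the $\eps^{-\kappa}$ weight, and leaves a power $\eps^{\delta/2}$ as stated after proper rewriting of the weight). The convergence of $e^{t(\lL_\eps-1)}\<2'0>_\eps(0) - e^{t(\Delta-1)}\<20>(0)$ combines the chaos-level convergence of $\<2'0>_\eps(0) \to \<20>(0)$ with the semigroup continuity estimate of Lemma~\ref{le:heat_continuity}. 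The main technical obstacle I expect is the careful bookkeeping of the kernel-difference expansion for the third-chaos object $\<3'2'>_\eps$ and for $\<3'1'>_\eps$ --- these are the diagrams where several propagators differ simultaneously, and one must verify that the rate $\eps^\delta$ survives all the contractions without degenerating at any marginal frequency window.
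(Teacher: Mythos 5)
Your overall strategy coincides with the paper's: reduce, via stationarity, equivalence of moments on fixed Wiener chaoses and a Fourier-mode criterion (Proposition~\ref{pr:criterion_convergence}), to second-moment bounds on $\widehat{\tau_\eps}(t,k)$; obtain the chaos expansions of the $V$-dependent objects by Hermite expansion of $V^{(k)}(\sqrt{\eps}\<1>_\eps)$ exactly as in Proposition~\ref{pr:chaos_expan_processes}; and run subcritical power counting with the propagators $\scal{k}_\eps^{-2}$ (the paper's Lemma~\ref{le:estimatekeps} interpolation $\scal{k}_\eps\gtrsim\eps^{\frac{1-\alpha}{2}}\scal{k}^{\frac{3-\alpha}{2}}$, which is the correct form of your ``items (2)--(3) give decay'' claim), with $C_\eps^{(1)},C_\eps^{(2)},C_\eps^{(3)}$ cancelling the divergent contractions. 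The time-regularity component of the $\xX$-norm for $\<3'0>_\eps$ and the two auxiliary bounds are also handled in the same spirit as the paper (the paper's stated $L^\infty$ weight appears to carry a typo; your Borell--TIS/chaining argument matches the weight $\eps^{\frac12+\kappa}$ actually used as input in Theorem~\ref{th:fixed_pt_convergence}).

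There is, however, a genuine gap exactly at the point you flag as ``the main technical obstacle'': the renormalised first-chaos component of $\<3'2'>_\eps$. Your mechanism for convergence is a propagator-by-propagator replacement using $\scal{k}_\eps^{-2}-\scal{k}^{-2}=O(\eps^2|k|^2)$ for $|k|\ll\eps^{-1}$ and $O(\scal{k}^{-2})$ otherwise, claimed to yield a uniform rate $\eps^{\delta}$. But the dangerous term is not a difference of propagators: after inserting the counterterm $C_\eps^{(2)}$ one must bound $\int_{-\infty}^t\big(\widehat{G_{\eps,m}}(t-r,k)-\widehat{G_{\eps,m}}(t-r,0)\big){\rm d}r$, which requires controlling the increment $|\scal{\ell+k}_\eps^2-\scal{\ell}_\eps^2|=\eps^{-2}|\qQ(2\pi\eps|\ell+k|)-\qQ(2\pi\eps|\ell|)|$ inside a sum that is exactly marginal (log-divergent before the subtraction). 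For $|\ell|\gtrsim\eps^{-1}$ nothing in Assumption~\ref{as:Q}(1)--(3) controls this increment better than the crude bound by the maximum, and the crude bound makes the sum $O(1)$ rather than $o(1)$; your dichotomy gives no smallness there. This is precisely why the paper, which deliberately avoids \eqref{eq:Q_derivative_growth} in this section, does \emph{not} obtain a rate for $\<3'2'>_\eps$: it splits the summation region with an auxiliary cutoff $\Lambda/\eps$, proving a bound of the form $C(\Lambda)\eps^{\delta}+C_0/\Lambda$ (see \eqref{eq:32_Fourier_bound} and the analysis of $\dD^{(1,1)}_{\eps,m}$, $\dD^{(1,2)}_{\eps,m}$), and then sends $\eps\to0$ before $\Lambda\to\infty$, which suffices since the theorem only asserts qualitative convergence. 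To complete your proof you would either need this two-parameter splitting (or an equivalent soft-truncation argument isolating the high-frequency tail, where a factor $1/\Lambda$ is gained from the off-criticality of one propagator), or you would have to invoke the growth condition \eqref{eq:Q_derivative_growth} on $\qQ'$, which the statement is designed not to require. As written, the step asserting that ``the rate $\eps^\delta$ survives all the contractions'' is false for this term under the standing assumptions.
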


We provide a criterion for the convergence of the (stationary) stochastic objects. The following proposition is the same as \cite[Proposition~3.6]{Phi43_pedestrians}. 

\begin{prop} \label{pr:criterion_convergence}
	Let $N \in \N$ and let $\{\tau_\eps\}_{\eps \in (0,1)}, \tau: \R^{+} \rightarrow \sS'(\T^d)$ be a family of random processes which are in the first $N$ Wiener chaos and which are also stationary in space. Let $\widehat{\tau_\eps}(t,\cdot)$ and $\widehat{\tau}(t,\cdot)$ denote their Fourier coefficients. If
	\begin{equation} \label{eq:criterion_space_reg}
	\E |\widehat{\tau_\eps}(t,k)|^{2} \lesssim \scal{k}^{-d-2\alpha}\;, \quad \lim_{\eps\to 0} \sup_{k\in\Z^3}\Big(\scal{k}^{d+2\alpha}\E |\widehat{\tau_\eps}(t,k) - \widehat{\tau}(t,k)|^{2}\Big)=0\;, 
	\end{equation}
	where both bounds are uniform in $\eps$ and $k$, then for every $\beta < \alpha$ and every $p \geq 1$, we have
	\begin{equation} \label{eq:convergence_space_reg}
	\E \sup_{t \in [0,1]} \|\tau(t,\cdot)\|_{\beta}^{p} < +\infty\;, \qquad \lim_{\eps\to 0}\E \sup_{t \in [0,1]} \|\tau_\eps(t,\cdot) - \tau(t,\cdot)\|_{\beta}^{p} =0\;.
	\end{equation}
	If in addition to \eqref{eq:criterion_space_reg}, we also have the bounds
	\begin{equation} \label{eq:criterion_process}
	\begin{split}
	&\E |\widehat{\tau_\eps}(t,k) - \widehat{\tau_\eps}(s,k)|^{2} \lesssim |t-s|^{\theta} \scal{k}^{-d-2\alpha+2\theta}\;,\\
	&\E \big| \big( \widehat{\tau_\eps}(t,k) - \widehat{\tau_\eps}(s,k) \big) - \big( \widehat{\tau}(t,k) - \widehat{\tau}(s,k) \big)  \big|^{2} \lesssim \eps^{\delta} |t-s|^{\theta} \scal{k}^{-d-2\alpha + 2 \theta}\;, 
	\end{split}
	\end{equation}
	for some $\theta \in (0,1)$, uniformly in $0 \leq s, t \leq 1$ and $k \in \Z^d$, then for every $\beta < \alpha-\theta$ and every $p \geq 1$, we have
	\begin{equation} \label{eq:convergence_process}
	\E \sup_{0 \leq s, t \leq 1} \frac{\|\tau(t)-\tau(s)\|_{\beta}^{p}}{|t-s|^{\frac{\theta p}{2}}} <+\infty\;, \quad \E \sup_{0 \leq s,t \leq 1} \frac{\big\| \big(\tau_\eps(t) - \tau_\eps(s)\big) - \big( \tau(t) - \tau(s) \big) \big\|_{\beta}^{p}}{|t-s|^{\frac{\theta p}{2}}} \lesssim \eps^{\frac{\delta p}{2}}\;.
	\end{equation}
	The proportionality constants depend on $\beta$ and $p$. 
\end{prop}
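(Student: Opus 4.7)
The plan is to reduce everything to three standard ingredients: (i) Plancherel and spatial stationarity to convert the Fourier bounds into $L^{2}(\TT^{d})$ bounds on Littlewood--Paley blocks; (ii) Nelson's hypercontractivity in the fixed finite Wiener chaos to upgrade $L^{2}$ moments to arbitrary $L^{p}$ moments; (iii) Kolmogorov's continuity theorem in a Banach space to pass from fixed-time to supremum-in-time estimates.

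First I would fix a Littlewood--Paley decomposition $\{\Delta_j\}_{j\geq -1}$ with cutoffs $\rho_j$ supported in $|k|\sim 2^{j}$. By Plancherel and spatial stationarity,
\begin{equation*}
\E \|\Delta_j \tau_\eps(t)\|_{L^{2}(\TT^{d})}^{2} \,=\, \sum_{k \in \ZZ^{d}} |\rho_j(k)|^{2}\, \E|\widehat{\tau_\eps}(t,k)|^{2} \,\lesssim\, \sum_{|k|\sim 2^{j}} \scal{k}^{-d-2\alpha} \,\lesssim\, 2^{-2\alpha j},
\end{equation*}
directly from the first bound in \eqref{eq:criterion_space_reg}; the second bound in \eqref{eq:criterion_space_reg} yields the analogous estimate for $\Delta_j(\tau_\eps-\tau)$ with an additional factor $o_\eps(1)$ on the right-hand side.

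Next, because $\Delta_j\tau_\eps(t,x)$ belongs to a Wiener chaos of order at most $N$, Nelson's hypercontractivity gives $\E|\Delta_j\tau_\eps(t,x)|^{2p} \lesssim_{p,N} (\E|\Delta_j\tau_\eps(t,x)|^{2})^{p}$. Combined with spatial stationarity, this yields $\E\|\Delta_j\tau_\eps(t)\|_{L^{2p}(\TT^{d})}^{2p} \lesssim 2^{-2\alpha p j}$. The Besov embedding $B^{\alpha}_{2p,2p}(\TT^{d}) \hookrightarrow B^{\alpha - d/(2p)}_{\infty,\infty}(\TT^{d})$ then yields the uniform-in-$t$ bound $\E\|\tau_\eps(t)\|_{\beta}^{p} \lesssim 1$ for any $\beta < \alpha$, after choosing $p$ large enough that $d/(2p) < \alpha-\beta$, together with the analogous vanishing bound for $\tau_\eps-\tau$.

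Finally, to upgrade the pointwise-in-$t$ estimates to the $\sup_{t}$ bounds of \eqref{eq:convergence_space_reg} and to the H\"older-in-time estimates \eqref{eq:convergence_process}, I would apply exactly the same chain of inequalities to the time increment $\tau_\eps(t)-\tau_\eps(s)$, which still lies in chaos of order $\leq N$. Using the hypothesis \eqref{eq:criterion_process} in place of \eqref{eq:criterion_space_reg}, one obtains $\E\|\tau_\eps(t)-\tau_\eps(s)\|_{\beta}^{p} \lesssim |t-s|^{\theta p/2}$ for any $\beta < \alpha-\theta$, and an additional $\eps^{\delta p /2}$ factor for the difference $\tau_\eps-\tau$. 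Kolmogorov's continuity criterion in the separable Banach space $\bB^{\beta}$ then simultaneously produces a continuous modification, the H\"older-in-time estimate \eqref{eq:convergence_process}, and the $\sup_{t}$ bounds of \eqref{eq:convergence_space_reg}. The argument is essentially mechanical; the only genuine care is juggling the three exponents $(\alpha,p,\beta)$ so that the $d/(2p)$-loss from the Besov embedding stays strictly below the gap $\alpha-\beta$ (respectively $\alpha-\theta-\beta$), which is possible precisely because the chaos order $N$ is fixed, so that the hypercontractivity constant remains uniform in $\eps$ and $p$ may be chosen arbitrarily large.
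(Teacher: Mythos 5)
Your route --- Plancherel plus spatial stationarity on Littlewood--Paley blocks, Nelson hypercontractivity within the fixed chaos of order $N$, the Besov embedding with the $d/(2p)$ loss, and a Kolmogorov argument in time --- is exactly the standard argument here: the paper itself offers no proof but simply cites \cite[Proposition~3.6]{Phi43_pedestrians}, whose proof follows this same scheme, and your first three steps (the diagonal correlation of Fourier modes from stationarity, the bound $\E\|\Delta_j\tau_\eps(t)\|_{L^{2p}}^{2p}\lesssim 2^{-2\alpha p j}$, and the summation over $j$ for $\beta<\alpha-d/(2p)$) are correct.

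The gap is in the last step. From $\E\|\tau_\eps(t)-\tau_\eps(s)\|_{\beta}^{p}\lesssim |t-s|^{\theta p/2}$, Kolmogorov's criterion (or Garsia--Rodemich--Rumsey) controls the H\"older seminorm only for exponents strictly below $\theta/2-1/p$; it does not ``simultaneously produce'' the exponent $\theta/2$ itself that appears in \eqref{eq:convergence_process}, and no argument can: the spatially constant process $\tau_\eps(t,x)=\tau(t,x)=B_{\theta/2}(t)$, a fractional Brownian motion of Hurst index $\theta/2$ viewed as a first-chaos, spatially stationary process on $\TT^d$, satisfies \eqref{eq:criterion_space_reg} and \eqref{eq:criterion_process}, while $\sup_{s\neq t}|B_{\theta/2}(t)-B_{\theta/2}(s)|/|t-s|^{\theta/2}=\infty$ almost surely, the point being that the mode $k=0$ gains nothing from the spatial room $\beta<\alpha-\theta$. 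Relatedly, the $\sup_{t}$ inside the expectation in \eqref{eq:convergence_space_reg} cannot come from the fixed-time hypothesis \eqref{eq:criterion_space_reg} alone; your own write-up only obtains it once \eqref{eq:criterion_process} is available. So what your argument actually delivers is: uniform fixed-time moment bounds, increment bounds $\E\|\tau_\eps(t)-\tau_\eps(s)\|_{\beta}^{p}\lesssim |t-s|^{\theta p/2}$ (with the extra factor $\eps^{\delta p/2}$ for the difference), and H\"older-in-time control for every exponent strictly less than $\theta/2$. This is all that is used downstream, because the hypotheses are verified with slack --- e.g.\ \eqref{eq:ff_time} holds for every $\theta\in[0,1]$, so Proposition~\ref{pr:30_bound_time} can be run with $\theta$ slightly above $1/4$ and $p$ large to recover the exponent $1/8$ required by the $\xX_T$-norm --- but as written your final sentence proves a strictly weaker statement than \eqref{eq:convergence_process}; either state the conclusion with an arbitrarily small loss in the time exponent or add this interpolation-in-$\theta$ step explicitly.
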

\begin{proof}
This is \cite[Proposition~3.6]{Phi43_pedestrians}. 
\end{proof}
\begin{remark}
We will apply the above result to the collection of symbols in $\Upsilon_\eps$ and $\Upsilon$. It turns out that for all symbols but for $\<3'2'>_\eps$ we are even showing more, namely there is a rate of convergence (a positive power of $\eps$). If we further use the fourth item in Assumption~\ref{as:Q} then one would also obtain a rate of convergence for $\<3'2'>_\eps$.
\end{remark}

\subsection{Notations and observations}
\label{sec:Notations}

We let $\widetilde{\iI_\eps}$ be the operator defined via
\begin{equation} \label{eq:convolution_stationary}
(\widetilde{\iI_\eps} f)(t) := \int_{-\infty}^{t} e^{(t-r)(\lL_\eps-1)} f(r) {\rm d}r\;.
\end{equation}
Note that $\widetilde{\iI_\eps}$ is different from $\iI_\eps$ since the integration in time starts from $-\infty$. 

For every $k \in \Z^3$ and $N \in \N$, we use the notation
\begin{equation*}
\pP(N,k) = \big\{ (\ell_1, \dots, \ell_N) \in (\Z^{3})^{N}: \ell_1 + \cdots + \ell_{N} = k \big\}\;. 
\end{equation*}
Recall the rescaled smooth cutoff functions $\xX_j$ from Appendix~\ref{sec:Besov}. We write with a slight abuse of notation
\begin{equation} \label{eq:resonance_set}
\sum_{\stackrel{\ell + \widetilde{\ell} = k}{\ell \sim \widetilde{\ell}}} \widehat{f}(\ell) \widehat{g}(\widetilde{\ell}) := \sum_{\ell + \widetilde{\ell} = k} \Big( \widehat{f}(\ell) \widehat{g}(\widetilde{\ell}) \sum_{|i-j| \leq 1}  \xX_i(\ell) \xX_j(\widetilde{\ell}) \Big) = \widehat{f \circ g}(k)\;,
\end{equation}
as well as
\begin{equation} \label{eq:resonance_set_complement}
\sum_{\stackrel{\ell + \widetilde{\ell} = k}{\ell \nsim \widetilde{\ell}}} \widehat{f}(\ell) \widehat{g}(\widetilde{\ell}) := \sum_{\ell + \widetilde{\ell} = k} \bigg( \widehat{f}(\ell) \widehat{g}(\widetilde{\ell}) \Big( 1 - \sum_{|i-j| \leq 1}  \xX_i(\ell) \xX_j(\widetilde{\ell}) \Big) \bigg)\;.
\end{equation}
Hence, the "set" $\{\ell \sim \widetilde{\ell}\}$ can be viewed as complement of $\aA$ where
\begin{equation*}
\aA=\Big\{(\ell, \widetilde{\ell}) \in \Z^3\times \Z^3:\, \Big(|\ell|> \frac83 \text{ or } |\widetilde{\ell}|> \frac83 \Big) \text{ and } \frac{|\ell|}{|\widetilde{\ell}|} \notin \Big[\frac{9}{64},\frac{64}{9}\Big]\Big\}\;. 
\end{equation*}
Note that the left hand sides of \eqref{eq:resonance_set} and \eqref{eq:resonance_set_complement} are not exactly the sum over the sets $\{\ell \sim \widetilde{\ell}\}$ or $\{\ell \nsim \widetilde{\ell}\}$, but rather weighted by the cutoff function $\chi$. But in what follows, we can regard them as the real sum over these sets without affecting the statements.

\begin{remark} \label{rem:resonance_restriction}
The above notation will come in handy in Section~\ref{sec:32} when we need to bound the resonance product between two stochastic objects. We note at this point the following consequences. For $k \in \Z^3$:
\begin{itemize}
	\item[(1)] The set $\{\ell \in \Z^3: \ell+k \sim -\ell\}$ is contained in $\big\{\ell \in \Z^3: \scal{\ell} \geq \frac{\scal{k}}{10}\big\}$; 
	
	\item[(2)] The set $\{\ell \in \Z^3: \ell+l \nsim - \ell\}$ is contained in $\big\{\ell \in \Z^3: \scal{\ell} \leq 10 \scal{k} \big\}$. 
\end{itemize}
Hence, whenever one deals with sums over $\{\ell+k \sim - \ell\}$ or $\{\ell+k \nsim -\ell\}$, they can be controlled by the sum over the larger sets as described above. 

\end{remark}

\subsection{Preliminary lemmas}

We start with the correlation of the free field.

\begin{prop} \label{pr:ff_correlation}
	Recall that $\<1>_\eps$ and $\<1>$ are the stationary solutions to \eqref{eq:free_field}. Also recall the notations $\scal{k}_\eps$ and $\scal{k}$ introduced in~\eqref{eq:notation_k}. Then we have
	\begin{equation}\label{eq:ff_correlation}
	\E \left( \widehat{\<1>_\eps}(s,k) \widehat{\<1>_\eps}(t,\ell) \right) = \delta_{k,-\ell} \cdot \frac{e^{-|t-s|\scal{k}_\eps^2}}{2\scal{k}_\eps^2}\;. 
	\end{equation}
	The bound holds for all $\eps \geq 0$, where the $\eps=0$ case corresponds to $\widehat{\<1>}$ and $\scal{k}$. As a consequence, for every $\theta \in [0,1]$, we have
	\begin{equation} \label{eq:ff_time}
	\E |\widehat{\<1>_\eps}(s,k) - \widehat{\<1>_\eps}(t,k)|^{2} \lesssim |t-s|^{\theta} \cdot \frac{1}{\scal{k}_\eps^{2-2\theta}}
	\end{equation}
	uniformly over $s, t \in [0,1]$. We also have that
	\begin{equation} \label{eq:ff_variance}
	\E [|\<1>_\eps (t,x)|^2] = \frac{1}{2} \sum_{k \in \Z^3} \frac{1}{\scal{k}_\eps^2} = \frac{\sigma^2}{\eps} + o_\eps(1)\;, 
	\end{equation}
	where we recall that $\sigma^2$ is given in \eqref{eq:variance_whole_space}. 
\end{prop}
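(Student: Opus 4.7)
My plan is to work entirely in Fourier. Since $\widehat{\lL_\eps-1}(k) = -\scal{k}_\eps^{2}$ by the very definition of $\scal{k}_\eps$ in \eqref{eq:notation_k}, the stationary mild solution of $\partial_t \<1>_\eps = (\lL_\eps-1)\<1>_\eps + \xi$ reads
\begin{equation*}
\widehat{\<1>_\eps}(t,k) \;=\; \int_{-\infty}^{t} e^{-(t-r)\scal{k}_\eps^{2}}\,\widehat{\xi}(r,k)\,dr\;.
\end{equation*}
Using the standard covariance $\E[\widehat{\xi}(r,k)\widehat{\xi}(r',\ell)] = \delta_{k,-\ell}\,\delta(r-r')$ of the real space-time white noise together with the radial symmetry $\scal{-k}_\eps = \scal{k}_\eps$ coming from Assumption~\ref{as:Q}, the It\^o isometry for $s\leq t$ reduces the two-point function to a single time integral:
\begin{equation*}
\E\big(\widehat{\<1>_\eps}(s,k)\,\widehat{\<1>_\eps}(t,\ell)\big) \;=\; \delta_{k,-\ell}\int_{-\infty}^{s} e^{-(s+t-2r)\scal{k}_\eps^{2}}\,dr \;=\; \delta_{k,-\ell}\,\frac{e^{-(t-s)\scal{k}_\eps^{2}}}{2\scal{k}_\eps^{2}}\;,
\end{equation*}
and the case $t\leq s$ is identical by symmetry, giving \eqref{eq:ff_correlation}.

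Combining \eqref{eq:ff_correlation} with the reality of $\<1>_\eps$ (so that $\widehat{\<1>_\eps}(t,-k)=\overline{\widehat{\<1>_\eps}(t,k)}$), one obtains both $\E|\widehat{\<1>_\eps}(t,k)|^{2} = 1/(2\scal{k}_\eps^{2})$ and $\E\big[\widehat{\<1>_\eps}(s,k)\overline{\widehat{\<1>_\eps}(t,k)}\big] = e^{-|t-s|\scal{k}_\eps^{2}}/(2\scal{k}_\eps^{2})$. Expanding the square then yields
\begin{equation*}
\E\big|\widehat{\<1>_\eps}(t,k) - \widehat{\<1>_\eps}(s,k)\big|^{2} \;=\; \frac{1 - e^{-|t-s|\scal{k}_\eps^{2}}}{\scal{k}_\eps^{2}}\;,
\end{equation*}
and the elementary inequality $1-e^{-x}\leq x^{\theta}$ for $x\geq 0$ and $\theta\in[0,1]$ (an immediate consequence of $1-e^{-x}\leq \min(1,x)$) gives \eqref{eq:ff_time} with proportionality constant $1$.

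For the variance \eqref{eq:ff_variance}, Parseval together with space-stationarity of $\<1>_\eps$ yields $\E[|\<1>_\eps(t,x)|^{2}] = \tfrac{1}{2}\sum_{k\in\Z^{3}}\scal{k}_\eps^{-2}$. To identify the asymptotics I would recast the sum as a Riemann sum of mesh $\eps$: writing $\scal{k}_\eps^{-2} = \eps^{2}/\big(\eps^{2}+\qQ(2\pi\eps|k|)\big)$ and substituting $\theta = \eps k$,
\begin{equation*}
\tfrac{1}{2}\sum_{k\in\Z^{3}}\scal{k}_\eps^{-2} \;=\; \frac{1}{2\eps}\sum_{\theta\in\eps\Z^{3}}\frac{\eps^{3}}{\eps^{2}+\qQ(2\pi|\theta|)}\;.
\end{equation*}
Assumption~\ref{as:Q} guarantees that $\theta \mapsto 1/\qQ(2\pi|\theta|)$ is integrable on $\R^{3}$: the near-origin singularity is of order $|\theta|^{-2}$ (integrable in three dimensions thanks to $\qQ(z)\sim z^{2}$ near $0$), and the tail decay $\lesssim |\theta|^{-3-\eta}$ is forced by $\qQ(z)\gtrsim z^{3+\eta}$ at infinity. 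A Riemann-sum argument then identifies the bracketed sum with $\int_{\R^{3}} \qQ(2\pi|\theta|)^{-1}\,d\theta + o_\eps(1) = 2\sigma^{2} + o_\eps(1)$, from which \eqref{eq:ff_variance} follows.

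The first two parts of the proposition are essentially algebraic once the stationary Duhamel formula is set up. The main technical obstacle is the final Riemann-sum estimate: near the origin the integrand $1/(\eps^{2}+\qQ(2\pi|\theta|))$ develops a peak of height $\sim \eps^{-2}$ and width $\sim\eps$, so each near-zero lattice cell contributes an amount of order $\eps$ to the difference between sum and integral, and one must verify that these local effects do not spoil the identification of the leading $\sigma^{2}/\eps$. I would handle this by splitting the sum into a near-origin piece (where the regularising $\eps^{2}$ is essential and the leading asymptotic can be computed explicitly by comparison with $\int\eps^{2}(\eps^{2}+|\theta|^{2})^{-1}|\theta|^{-2}d\theta$), a smooth bulk where a standard Euler--Maclaurin-type bound yields an $o(1)$ error, and the rapidly decaying tail where the bound $\qQ^{-1}\lesssim |\theta|^{-3-\eta}$ supplies dominated convergence.
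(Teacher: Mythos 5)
Your proposal is correct and follows essentially the same route as the paper: the stationary Duhamel formula in Fourier with the white-noise covariance gives \eqref{eq:ff_correlation}, the time-increment bound \eqref{eq:ff_time} follows from it, and \eqref{eq:ff_variance} is obtained via Parseval, the substitution $\theta=\eps k$ and a Riemann-sum approximation using the integrability of $1/\qQ(2\pi|\theta|)$ guaranteed by Assumption~\ref{as:Q}. You merely spell out details the paper leaves implicit (the explicit expansion with $1-e^{-x}\leq x^{\theta}$, and the near-origin/bulk/tail splitting of the Riemann-sum error), which is consistent with, and if anything more careful than, the paper's argument.
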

\begin{proof}
	We first derive the correlation function \eqref{eq:ff_correlation}. We have the formula
	\begin{equation*}
	\widehat{\<1>_\eps}(t,k) = \int_{-\infty}^{t} e^{-\scal{k}_\eps^2 (t-r)} \hxi(r,k) {\rm d}r\;, 
	\end{equation*}
	where $\{\widehat{\xi}(\cdot,k)\}$ are independent complex white noises (in time) except that $\widehat{\xi}(\cdot,k) = \overline{\widehat{\xi}(\cdot,-k)}$. Hence it satisfies $\E\big( \widehat{\xi}(r,k) \widehat{\xi}(r',\ell)\big) = \delta_{k,-\ell} \delta(r-r')$. The correlation relation \eqref{eq:ff_correlation} then follows. Note that it also works for $\eps = 0$ with $\<1>_\eps$ and $\scal{k}_\eps$ replaced by $\<1>$ and $\scal{k}$. The bound \eqref{eq:ff_time} is a direct consequence of the correlation relation in \eqref{eq:ff_correlation}. 
	
	As for \eqref{eq:ff_variance}, using~\eqref{eq:ff_correlation} and Parseval's equality, we see that
	\begin{equation}
	\E [|\<1>_\eps (t,x)|^2] = \sum_{k\in\Z^3} \E[|\widehat{\<1>_\eps}(t,k)|^2]
	= \sum_{k\in\Z^3}\frac{\eps^2}{2(\eps^2 + \qQ(2\pi \eps |k|) }.
	\end{equation}
	Making the change of variables $\theta = \eps k$ and a Riemann sum approximation we see that the latter term is asymptotically equal to
	\begin{equation}
	\frac{1}{2\eps} \int_{\R^3}\frac{1}{\qQ(2\pi|\theta|)}\, d\theta
	\end{equation}
	which yields \eqref{eq:ff_variance}. 
\end{proof}

\begin{rmk}
	As a consequence of the above proof we see that the random field $\sqrt{\eps} \<1>_\eps$ is stationary and has distribution $\nN(0,\sigma_\eps^2)$ at every space-time point, where
	\begin{equation} \label{eq:variance_ff_scale_eps}
	\sigma_\eps^2 = \frac{\eps}{2} \sum_{k \in \Z^3} \frac{1}{\scal{k}_\eps^2} = \frac{\eps^3}{2} \sum_{k \in \Z^3} \frac{1}{\eps^2 + \qQ(2 \pi \eps |k|)}\;, 
	\end{equation}
	which converges to $\sigma^2$ as $\eps \rightarrow 0$. 
\end{rmk}

\begin{lem}\label{le:estimatekeps}
	Recall \eqref{eq:Q_growth} which states that $\qQ(z) \gtrsim |z|^{3+\eta}$ for $|z|\geq 1$. For any $k, \ell_1,\ldots, \ell_{n}\in\Z^3$ and $\alpha \in [-\eta,1]$ we have the estimates
\begin{equation}
\sum_{j=1}^{n} \scal{\ell_j}_\eps^2 \gtrsim \prod_{j=1}^{n} \scal{\ell_j}_\eps^{\frac{2}{n}}\;,\quad \text{and} \quad \scal{k}_\eps\gtrsim \eps^{\frac{1-\alpha}{2}}\scal{k}^{\frac{3-\alpha}{2}}\;.
\end{equation}
\end{lem}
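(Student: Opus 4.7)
The first inequality is just the arithmetic--geometric mean inequality applied to the non-negative numbers $\scal{\ell_j}_\eps^2$: one has $\sum_{j=1}^n \scal{\ell_j}_\eps^2 \geq n \prod_{j=1}^n \scal{\ell_j}_\eps^{2/n} \gtrsim \prod_{j=1}^n \scal{\ell_j}_\eps^{2/n}$, uniformly in $n$ (with the convention that the proportionality constant depends on $n$, which is fine since $n$ is fixed in every application).

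For the second inequality, the plan is to first establish two basic comparisons for $\scal{k}_\eps^2 = 1 + \eps^{-2} \qQ(2\pi\eps|k|)$, valid uniformly in $k \in \Z^3$ and $\eps \in (0,1]$:
\begin{equation*}
(\mathrm{a}) \quad \scal{k}_\eps^2 \gtrsim \scal{k}^2, \qquad (\mathrm{b}) \quad \scal{k}_\eps^2 \gtrsim \eps^{1+\eta} |k|^{3+\eta}.
\end{equation*}
For (a), I would combine the expansion $\qQ(z) = z^2 + \oO(z^4)$ near $0$ (from Assumption~\ref{as:Q}(1)), the positivity and continuity of $\qQ$ on any compact subset of $(0,\infty)$ (from Assumption~\ref{as:Q}(2)), and the growth \eqref{eq:Q_growth} to conclude that $\qQ(z)\gtrsim z^2$ for all $z\geq 0$; substituting $z = 2\pi\eps|k|$ yields (a). Bound (b) follows immediately from the remark after Assumption~\ref{as:Q} that $\qQ(z) \geq c z^{3+\eta}$ for all $z\geq 0$.

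With (a) and (b) in hand, the second inequality is obtained by a case split based on the size of $\eps\scal{k}$. In the regime $\eps\scal{k} \lesssim 1$, since $\alpha \leq 1$ we have $(\eps\scal{k})^{1-\alpha}\lesssim 1$, so
\begin{equation*}
\eps^{1-\alpha}\scal{k}^{3-\alpha} = (\eps\scal{k})^{1-\alpha}\,\scal{k}^{2} \lesssim \scal{k}^2 \lesssim \scal{k}_\eps^2,
\end{equation*}
by (a). In the regime $\eps\scal{k} \gtrsim 1$, one has $\scal{k}\sim |k|$, and (b) gives $\scal{k}_\eps^2 \gtrsim \eps^{1+\eta}\scal{k}^{3+\eta}$. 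Comparing with the target reduces the claim to $(\eps\scal{k})^{\alpha+\eta}\gtrsim 1$, which holds because $\eps\scal{k}\gtrsim 1$ and $\alpha+\eta \geq 0$ by hypothesis. Taking square roots yields the stated bound.

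I do not expect any real obstacle here; the argument is purely arithmetic. The only point that needs care is verifying that the exponent range $\alpha\in[-\eta,1]$ is exactly what makes both branches of the case split work (the endpoints $\alpha=1$ and $\alpha=-\eta$ correspond precisely to bounds (a) and (b) respectively), and that one uses the extension of the growth condition \eqref{eq:Q_growth} from $z\geq 1$ to all $z\geq 0$ noted after Assumption~\ref{as:Q}.
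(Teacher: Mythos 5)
Your proof is correct and follows essentially the same route as the paper's (two-line) argument: the paper deduces the first bound from $\sum_j \scal{\ell_j}_\eps^2 \gtrsim \max_j \scal{\ell_j}_\eps^2$, which is interchangeable with your AM--GM step, and for the second bound it simply cites \eqref{eq:Q_origin} and \eqref{eq:Q_growth}, for which your two comparisons $\scal{k}_\eps^2 \gtrsim \scal{k}^2$, $\scal{k}_\eps^2 \gtrsim \eps^{1+\eta}|k|^{3+\eta}$ and the case split on $\eps\scal{k}$ are exactly the intended details.
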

\begin{proof}
	The first inequality follows from the estimate 
	\begin{equation}
	\sum_{j=1}^{n} \scal{\ell_j}_\eps^2 \gtrsim \max_{j\in\{1,2,\ldots, n\}} \scal{\ell_j}_\eps^{2},
	\end{equation}
	whereas the second one follows from the assumptions \eqref{eq:Q_origin} and \eqref{eq:Q_growth}.
\end{proof}

\begin{lem} \label{le:convolution}
	For every $n \in \N$ and $\alpha \in (0, \frac{3}{n})$, we have the bound
	\begin{equation*}
	\sum \prod_{j=1}^{n} \frac{1}{\scal{\ell_j}^{3-\alpha}} \lesssim \frac{1}{\scal{k}^{3-n\alpha}}\;, 
	\end{equation*}
	where the sum is taken over $\{\ell_1, \dots, \ell_n\} \in (\Z^3)^{n}$ such that $\ell_1 + \cdots + \ell_n = k \in \Z^3$. The proportionality constant is independent of $k$. 
\end{lem}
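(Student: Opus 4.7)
My plan is to prove the bound by induction on $n$, with the only real analytic input being the standard two-variable convolution estimate on $\mathbb{Z}^3$: if $a,b\in(0,3)$ and $a+b>3$, then
\begin{equation*}
\sum_{\ell\in\mathbb{Z}^3}\frac{1}{\langle\ell\rangle^{a}\langle k-\ell\rangle^{b}}\lesssim\frac{1}{\langle k\rangle^{a+b-3}}\;,
\end{equation*}
uniformly in $k\in\mathbb{Z}^3$. This fact is classical (it follows by splitting the sum into the three regions $|\ell|\le\tfrac12|k|$, $|k-\ell|\le\tfrac12|k|$, and the remainder, and comparing the discrete sum to its continuous analogue).

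The base case $n=1$ is trivial since the sum reduces to the single term $1/\langle k\rangle^{3-\alpha}$. For the inductive step, assume the estimate for $n-1$ factors. Freezing $\ell_n$ and summing first over $\ell_1+\cdots+\ell_{n-1}=k-\ell_n$ yields
\begin{equation*}
\sum_{\ell_1+\cdots+\ell_n=k}\prod_{j=1}^{n}\frac{1}{\langle\ell_j\rangle^{3-\alpha}}\;\lesssim\;\sum_{\ell_n\in\mathbb{Z}^3}\frac{1}{\langle\ell_n\rangle^{3-\alpha}\,\langle k-\ell_n\rangle^{3-(n-1)\alpha}}\;,
\end{equation*}
and an application of the two-variable convolution bound with $a=3-\alpha$ and $b=3-(n-1)\alpha$ gives the claimed decay rate $\langle k\rangle^{-(3-n\alpha)}$, since $a+b-3=3-n\alpha$.

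The only thing one needs to verify along the way is that the hypotheses of the convolution estimate remain valid. The constraint $\alpha\in(0,3/n)$ ensures $n\alpha<3$, so $a+b=6-n\alpha>3$ at the final step; at every earlier step the corresponding exponent $3-j\alpha$ for $j\le n-1$ is strictly positive (so both $a,b\in(0,3)$), and the inductive hypothesis is applicable since $\alpha<3/n\le 3/(n-1)$. Thus the bound propagates through without loss.

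I do not anticipate a serious obstacle: the argument is a textbook induction and the sole nontrivial ingredient is the two-variable convolution lemma, whose proof is standard. The only mild bookkeeping is ensuring the exponent remains in the admissible range $(0,3)$ at each stage, which is guaranteed precisely by the assumption $\alpha<3/n$.
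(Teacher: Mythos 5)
Your proof is correct and is essentially the paper's argument: the paper likewise dispatches $n=2$ via the standard two-variable convolution estimate and handles $n\geq 3$ by induction, which is exactly your scheme (and your exponent bookkeeping, using $\alpha<3/n\le 3/(n-1)$ to keep $a,b\in(0,3)$ with $a+b>3$, is the right check).
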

\begin{proof}
	The case $n=2$ is the usual convolution estimate. The case $n \geq 3$ can be obtained by induction. 
\end{proof}

\begin{lem} \label{le:convolution_resonance}
	For $\beta, \gamma \in \R$ satisfying $\beta + \gamma > 3$, we have the bound
	\begin{equation*}
	\sum_{\stackrel{\ell_1+ \ell_2 = k}{\ell_1 \sim \ell_2}} \frac{1}{\scal{\ell_1}^{\beta}} \cdot \frac{1}{\scal{\ell_2}^{\gamma}} \lesssim \frac{1}{\scal{k}^{\beta+\gamma-3}}\;, 
	\end{equation*}
	uniformly over $k \in \Z^3$. 
\end{lem}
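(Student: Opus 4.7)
The plan is to reduce the problem to a dyadic block count, using the key geometric observation recorded in Remark~\ref{rem:resonance_restriction}: on the resonance set $\{\ell_1 \sim \ell_2\}$ with $\ell_1 + \ell_2 = k$, both momenta have comparable magnitudes, and each is $\gtrsim \scal{k}$. Indeed, by item~(1) of that remark applied with $\ell = \ell_1$ and $k$ (so that $-\ell_2 = \ell_1 - k$), we have $\scal{\ell_1} \gtrsim \scal{k}$, and by symmetry $\scal{\ell_2} \gtrsim \scal{k}$. The comparability $\scal{\ell_1} \asymp \scal{\ell_2}$ follows from the very definition of the resonance cutoff (the ratio $|\ell_1|/|\ell_2|$ is constrained to a bounded interval, up to low frequencies which are harmless).

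Next I would perform a dyadic decomposition. Let $N$ range over dyadic integers $N \geq 1$. Group the summation according to $\scal{\ell_1} \asymp N$; the resonance constraint then forces $\scal{\ell_2} \asymp N$ as well, and the preceding paragraph forces $N \gtrsim \scal{k}$. Within the dyadic shell $\scal{\ell_1} \asymp N$ the number of lattice points is bounded by $C N^3$, and the summand is pointwise bounded by $C N^{-\beta-\gamma}$. Thus
\begin{equation*}
\sum_{\stackrel{\ell_1+\ell_2=k}{\ell_1 \sim \ell_2}} \frac{1}{\scal{\ell_1}^\beta \scal{\ell_2}^\gamma} \;\lesssim\; \sum_{\substack{N \text{ dyadic} \\ N \gtrsim \scal{k}}} N^3 \cdot N^{-\beta-\gamma} \;=\; \sum_{\substack{N \text{ dyadic} \\ N \gtrsim \scal{k}}} N^{3-\beta-\gamma}.
\end{equation*}
Since $\beta+\gamma > 3$, this geometric series is summable and is controlled by its first term, giving the bound $\lesssim \scal{k}^{3-\beta-\gamma}$, as desired.

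There is no substantial obstacle here; the only subtle point is justifying that the resonance cutoff, which involves the smooth Littlewood–Paley functions $\xX_i, \xX_j$ with $|i-j|\le 1$ rather than a literal indicator of comparability, nevertheless produces the required two-sided control $\scal{\ell_1}, \scal{\ell_2} \gtrsim \scal{k}$. This is exactly the content of Remark~\ref{rem:resonance_restriction}. In the degenerate case where either $\ell_1$ or $\ell_2$ lies in the bounded low-frequency region (so that the Littlewood–Paley supports overlap without forcing comparability), the set of such $(\ell_1,\ell_2)$ is finite, hence contributes at most $O(1) \lesssim \scal{k}^{3-\beta-\gamma}$ uniformly in $k$, and can be absorbed into the estimate.
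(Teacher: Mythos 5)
Your proof is correct. Note that the paper does not prove this lemma at all: it simply cites \cite[Lemma~4.2]{Phi43_pedestrians}, and your dyadic-shell argument is essentially the standard proof of that cited result (on the resonance set one has $\scal{\ell_1}\asymp\scal{\ell_2}\gtrsim\scal{k}$, so the sum collapses to $\sum_{\scal{\ell}\gtrsim\scal{k}}\scal{\ell}^{-\beta-\gamma}\lesssim\scal{k}^{3-\beta-\gamma}$, which is exactly what your dyadic decomposition computes); the two-sided comparability within each shell is what makes the bound safe even when $\beta$ or $\gamma$ is negative, and you use it correctly. The only wording to tighten is the treatment of the degenerate low-frequency part of the resonance cutoff: as stated, ``contributes at most $O(1)\lesssim\scal{k}^{3-\beta-\gamma}$ uniformly in $k$'' is not literally true for large $k$ since $3-\beta-\gamma<0$; the correct observation is that this region requires both $|\ell_1|,|\ell_2|\le 8/3$, hence is empty unless $|k|\le 16/3$, and for such bounded $k$ the finite $O(1)$ contribution is indeed $\lesssim\scal{k}^{3-\beta-\gamma}$. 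With that one-line fix the argument is complete.
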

\begin{proof}
	This is the content of \cite[Lemma~4.2]{Phi43_pedestrians}. 
\end{proof}

Before we formulate the next proposition, we recall that for $n \in \N$, the $n$-th Hermite polynomial with parameter $\nu \geq 0$, denoted by $H_n(\cdot,\nu)$, is defined via
\begin{equation}
H_n(x;\nu) = (-1)^n e^{x^2/2\nu} \nu^n \frac{\partial^n}{\partial x^n} \big( e^{-x^2/2\nu} \big). 
\end{equation}
These polynomials satisfy the relation $\nu^{n/2} H_n(\tfrac{x}{\sqrt{\nu}};1) = H_n(x;\nu)$. Finally, we note that if $X\sim\nN(0,\nu)$, then for any nice function $f:\R \to\R$, we have the identity
\begin{equs}\label{eq:hermiteexpansion}
f(X) = \sum_{k \geq 0} c_k H_k(X;\nu),\quad\text{with}\quad
c_k = \frac{\E[f^{(k)}(X)]}{k!}.
\end{equs} 
To derive the formula for the $c_k$'s, one uses the orthogonality of the Hermite polynomials, Lemma 1.1.1 in~\cite{Nua06} (note however at this point that our normalisation differs from~\cite{Nua06}), and $k$-times integration by parts.

\begin{prop} \label{pr:chaos_expan_processes}
	Recall the definition of $\<1'>_\eps$, $\<2'>_\eps$ and $\<3'>_\eps$ from \eqref{eq:noises_V}. For each $m \in \N^{+}$ and $\eps>0$, define $a_{m}^{(\eps)}$ by
	\begin{equation} \label{eq:chaos_expan_coeff}
	a_{m}^{(\eps)} := \frac{\E V^{(2m+2)}(\sqrt{\eps} \<1>_\eps)}{6 \lambda \cdot (2m-1)!}\;.
	\end{equation}
	We have the chaos expansion
	\begin{equation} \label{eq:chaos_expan_processes}
	\begin{split}
	\<1'>_\eps &= \sum_{m=1}^{n-1} a_{m}^{(\eps)} \cdot \eps^{m-1} \<1>_\eps^{\diamond (2m-1)}\;, \qquad \<2'>_\eps = \sum_{m=1}^{n-1} \frac{a_m^{(\eps)}}{m} \cdot \eps^{m-1} \<1>_\eps^{\diamond (2m)}\;\\
	\<3'>_\eps &= \sum_{m=1}^{n-1} \frac{3 a_m^{(\eps)}}{m(2m+1)} \cdot \eps^{m-1} \<1>_\eps^{\diamond (2m+1)}\;.
	\end{split}
	\end{equation}
	Here, we use the shorthand $\<1>_\eps^{\diamond \ell}= H_\ell(\<1>_\eps, \tfrac{\sigma_\eps^2}{\eps})$, where $\sigma_\eps^2$ denotes the variance of the stationary Gaussian process $\sqrt{\eps} \, \<1>_\eps$.
\end{prop}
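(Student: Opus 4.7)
The plan is to apply the Hermite expansion formula \eqref{eq:hermiteexpansion} to the polynomials $V^{(j)}$ evaluated at the centered Gaussian $X_\eps := \sqrt{\eps}\,\<1>_\eps$ of variance $\sigma_\eps^2$, for $j = 1, 2, 3$, and then to rewrite each Hermite polynomial in $X_\eps$ as the corresponding Wick power of $\<1>_\eps$. Two elementary ingredients will do all the work. First, since $V$ is an even polynomial of degree $2n$, the derivative $V^{(j)}$ is a polynomial of degree $2n-j$ with the same parity as $j$; hence $\E[V^{(i)}(X_\eps)] = 0$ whenever $i$ is odd, and in the expansion $V^{(j)}(X_\eps) = \sum_{k \geq 0} c_k H_k(X_\eps; \sigma_\eps^2)$ with $c_k = \E[V^{(j+k)}(X_\eps)]/k!$, only indices $k$ with $j+k$ even survive, and the sum truncates at $k = 2n-j$. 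Second, the homogeneity identity $H_k(\alpha y; \alpha^2 \mu) = \alpha^k H_k(y; \mu)$, which follows immediately from the scaling relation recorded just before \eqref{eq:hermiteexpansion}, applied with $\alpha = \sqrt{\eps}$, $y = \<1>_\eps$ and $\mu = \sigma_\eps^2/\eps$, gives the conversion rule
\begin{equation*}
H_k(X_\eps; \sigma_\eps^2) \;=\; \eps^{k/2}\,\<1>_\eps^{\diamond k}\,.
\end{equation*}

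With these two ingredients in hand, each of the three identities is a one-line computation. For $\<1'>_\eps = (6\lambda\sqrt{\eps})^{-1} V^{(3)}(X_\eps)$ the surviving indices are $k = 2m-1$ for $m = 1, \dots, n-1$; the conversion rule supplies a factor $\eps^{(2m-1)/2}$ that combines with the prefactor $\eps^{-1/2}$ to yield $\eps^{m-1}$, and the coefficient collapses to $\E[V^{(2m+2)}(X_\eps)]/(6\lambda (2m-1)!) = a_m^{(\eps)}$. The same template handles $\<2'>_\eps$ and $\<3'>_\eps$, with the only additional step being to verify that the lowest-order terms of the Hermite expansions exactly cancel the renormalisation subtractions. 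For $\<2'>_\eps$ this is the $k=0$ term of $V''(X_\eps)$, which equals $\E[V''(X_\eps)] = 3\lambda\eps\,C_\eps^{(1)}$ by the very definition \eqref{eq:C1} of $C_\eps^{(1)}$; dividing by $3\lambda\eps$ this is precisely the subtracted $C_\eps^{(1)}$. For $\<3'>_\eps$ the relevant term is $k=1$ in the expansion of $V'(X_\eps)$; because $H_1(x;\nu) = x$, this term is $\E[V''(X_\eps)]\cdot X_\eps = 3\lambda\eps\,C_\eps^{(1)}\cdot\sqrt{\eps}\,\<1>_\eps$, which after division by $\lambda\eps^{3/2}$ matches the subtracted $3C_\eps^{(1)}\<1>_\eps$ exactly.

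There is really no analytic obstacle here -- the statement is a purely algebraic identity and the proof is a bookkeeping exercise. The only small pitfall is to keep consistent conventions between the two variance scales ($\sigma_\eps^2$ for $X_\eps$ versus $\sigma_\eps^2/\eps$ for $\<1>_\eps$) when converting Hermite polynomials into Wick powers, and to unfold the factorials through $(2m)! = 2m\,(2m-1)!$ and $(2m+1)! = (2m+1)(2m)(2m-1)!$ in order to recognise the resulting numerical prefactors as $a_m^{(\eps)}$, $a_m^{(\eps)}/m$ and $3\,a_m^{(\eps)}/(m(2m+1))$ respectively.
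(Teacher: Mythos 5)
Your proposal is correct and follows essentially the same route as the paper: apply the Hermite expansion \eqref{eq:hermiteexpansion} to $V^{(3)}$, $V''$, $V'$ at $\sqrt{\eps}\,\<1>_\eps$, convert via $(\sqrt{\eps}\,\<1>_\eps)^{\diamond k}=\eps^{k/2}\,\<1>_\eps^{\diamond k}$, and match coefficients with \eqref{eq:noises_V} and \eqref{eq:C1}. You in fact spell out a bit more than the paper does (the parity truncation of the expansion and the explicit cancellation of the $C_\eps^{(1)}$ subtractions), and the bookkeeping of factorials is accurate.
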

\begin{proof}
    For $V^{(3)}$, $V''$ and $V'$, we have by~\eqref{eq:hermiteexpansion} the chaos expansions
	\begin{equation*}
	\begin{split}
	V^{(3)}(\sqrt{\eps} \<1>_\eps) &= \sum_{m=1}^{n-1} \frac{\E V^{(2m+2)}(\sqrt{\eps} \<1>_\eps)}{(2m-1)!} \cdot (\sqrt{\eps} \<1>_\eps)^{\diamond (2m-1)}\;,\\
	V''(\sqrt{\eps} \<1>_\eps) &= \sum_{m=0}^{n-1} \frac{\E V^{(2m+2)}(\sqrt{\eps} \<1>_\eps)}{(2m)!} \cdot (\sqrt{\eps} \<1>_\eps)^{\diamond (2m)}\;,\\
	V'(\sqrt{\eps} \<1>_\eps) &= \sum_{m=0}^{n-1} \frac{\E V^{(2m+2)}(\sqrt{\eps} \<1>_\eps)}{(2m+1)!} \cdot (\sqrt{\eps} \<1>_\eps)^{\diamond (2m+1)}\;, 
	\end{split}
	\end{equation*}
	where the Wick product is with respect to the stationary Gaussian $\sqrt{\eps} \<1>_\eps$ with variance $\sigma_\eps^2$ as given in \eqref{eq:variance_ff_scale_eps}. Now note that $(\sqrt{\eps} \<1>_\eps)^{\diamond \ell} = \eps^{\frac{\ell}{2}} \<1>_\eps^{\diamond \ell}$. The conclusion then follows from the Definition \eqref{eq:noises_V} and the expression for $C_{\eps}^{(1)}$ in \eqref{eq:C1}. 
\end{proof}

\begin{prop} \label{pr:C2C3_expression}
	The renormalisation constants $C_{\eps}^{(2)}$ and $C_{\eps}^{(3)}$ defined in \eqref{eq:C2C3} satisfy the expressions
	\begin{equation} \label{eq:C2C3_expression}
	\begin{split}
	C_{\eps}^{(2)} &= \sum_{m=1}^{n-1} \frac{(a_m^{(\eps)})^{2}}{m^2} \cdot \eps^{2m-2} \; \E \big[ \widetilde{\iI_\eps}(\<1>_\eps^{\diamond (2m)}) \circ \<1>_\eps^{\diamond (2m)} \big]\;,\\
	C_\eps^{(3)} &= \sum_{m=1}^{n-2} \frac{3 a_{m}^{(\eps)} a_{m+1}^{(\eps)}}{m (2m+1)} \cdot \eps^{2m-1} \; \E \big[ \widetilde{\iI_\eps}(\<1>_\eps^{\diamond (2m+1)}) \circ \<1>_\eps^{\diamond (2m+1)} \big]\;, 
	\end{split}
	\end{equation}
	where $a_{m}^{(\eps)}$ is as given in \eqref{eq:chaos_expan_coeff}, and $\widetilde{\iI_\eps}$ is as given in \eqref{eq:convolution_stationary}. 
\end{prop}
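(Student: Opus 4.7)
The plan is to derive both identities by substituting the chaos expansions from Proposition~\ref{pr:chaos_expan_processes} into the definitions $\<2'0>_\eps = \widetilde{\iI_\eps}(\<2'>_\eps)$ and $\<3'0>_\eps = \widetilde{\iI_\eps}(\<3'>_\eps)$, distributing the resonance product $\circ$ over the resulting finite sums, and then exploiting the orthogonality of Wiener chaoses of different orders under expectation. Since $\widetilde{\iI_\eps}$ acts linearly and diagonally in Fourier space (multiplication by $(\scal{k}_\eps^2 + \partial_t)^{-1}$ along each mode), it preserves the chaos order. Hence $\widetilde{\iI_\eps}(\<1>_\eps^{\diamond (2m)})$ still lies in the $(2m)$-th Wiener chaos, and similarly for odd orders.

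For $C_\eps^{(2)}$, writing out
\[
\<2'0>_\eps \circ \<2'>_\eps \;=\; \sum_{m,m'=1}^{n-1} \frac{a_m^{(\eps)} a_{m'}^{(\eps)}}{m m'}\,\eps^{m+m'-2}\,\widetilde{\iI_\eps}\bigl(\<1>_\eps^{\diamond(2m)}\bigr) \circ \<1>_\eps^{\diamond(2m')},
\]
I would apply $\E$ term by term. Since the resonance product of two homogeneous Wiener chaos elements of orders $p$ and $q$ is a sum of chaos elements whose orders have the same parity as $p+q$ with a zeroth-chaos component only when $p=q$, the expectation kills every cross term with $m\neq m'$. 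This yields exactly the stated identity for $C_\eps^{(2)}$, with the sum restricted to the diagonal $m=m'$.

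For $C_\eps^{(3)}$, the same procedure gives
\[
\E\bigl[\<3'0>_\eps \circ \<1'>_\eps\bigr] \;=\; \sum_{m,m'=1}^{n-1} \frac{3 a_m^{(\eps)} a_{m'}^{(\eps)}}{m(2m+1)}\,\eps^{m+m'-2}\,\E\bigl[\widetilde{\iI_\eps}(\<1>_\eps^{\diamond(2m+1)}) \circ \<1>_\eps^{\diamond(2m'-1)}\bigr],
\]
and orthogonality forces $2m+1 = 2m'-1$, i.e.\ $m' = m+1$. The constraint $m'\leq n-1$ then yields $m\leq n-2$, giving precisely the stated range and the factor $\eps^{2m-1}$ after combining the two powers of $\eps$.

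The only nontrivial point to justify carefully is the interchange of $\E$ with the resonance product $\circ$ and the operator $\widetilde{\iI_\eps}$, and the claim that $\E[\widetilde{\iI_\eps}(\<1>_\eps^{\diamond p}) \circ \<1>_\eps^{\diamond q}] = 0$ whenever $p\neq q$. For the first, both $\circ$ and $\widetilde{\iI_\eps}$ are defined by absolutely convergent Fourier series once the $L^2(\Omega)$ bounds on Fourier coefficients of the stationary Gaussian factors are established (Proposition~\ref{pr:ff_correlation}), so Fubini applies. For the second, writing the resonance product in Fourier coordinates as a pairing of modes $\ell$ and $-\ell$ and using Wick's theorem on the resulting $(p+q)$-fold Wiener–Itô integral, only full pairings of the $p$ copies of $\widehat{\xi}$ with the $q$ copies contribute; such full pairings require $p=q$, so the expectation vanishes otherwise. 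This is the one step where some bookkeeping is needed, but it is standard and no new estimates beyond those already available are required.
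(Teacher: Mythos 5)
Your argument is correct and is essentially the paper's own proof: the stated identities follow by inserting the chaos expansions of Proposition~\ref{pr:chaos_expan_processes} into the definitions \eqref{eq:C2C3}, noting that $\widetilde{\iI_\eps}$ preserves the chaos order, and using that the expectation of the (resonance) product of elements in distinct homogeneous chaoses vanishes, which kills all off-diagonal terms and forces $m'=m$ (resp.\ $m'=m+1$). Your additional bookkeeping on Fubini and the Wick-pairing justification is fine but not a different route.
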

\begin{proof}
	This is a direct consequence of the definition of the constants in \eqref{eq:C2C3}, the expressions in \eqref{eq:chaos_expan_processes}, and the fact that the (resonance) product between two elements in different homogeneous chaos has expectation zero. 
	
	We can also see that $C_\eps^{(2)}$ diverges logarithmically, and that $C_\eps^{(3)}$ is uniformly bounded in $\eps$. In fact, the only term with logarithmic divergence in $C_\eps^{(2)}$ is the one with $m=1$ in the sum, and all other terms contributing to that sum are uniformly bounded in $\eps$ also, see~\eqref{eq:C2_formula}. 
\end{proof}

Combining Proposition~\ref{pr:chaos_expan_processes} with Proposition~\ref{pr:C2C3_expression} we obtain the following corollary.
\begin{cor}\label{cor:32_chaos_expan}
	For $\<3'2'>_\eps$ we have the chaos expansion
	\begin{equation*}\label{e:32_chaos_expan}
	\begin{split}
	\<3'2'>_\eps = &\sum_{m=1}^{n-1} \frac{3 (a_m^{(\eps)})^{2}}{m^2 (2m+1)} \cdot\tau_{\eps,m} + \sum_{m=1}^{n-2} \frac{3 a_{m}^{(\eps)} a_{m+1}^{(\eps)}}{m (m+1) (2m+1)} \cdot\sigma_{\eps,m}\\
	&+ \sum_{m,\ell} \frac{3 a_m^{(\eps)} a_{\ell}^{(\eps)}}{\ell m (2m+1)} \cdot\nu_{\eps,\ell,m}\;, 
	\end{split}
	\end{equation*}
	where the last sum is taken over integers $1 \leq m,\ell \leq n-1$ such that $m-\ell \geq 1$ or $\ell - m \geq 2$. 
	Here
	\begin{equation}\label{e:32_chaos_expan_coeff}
	\begin{split}
	&\tau_{\eps,m}=\eps^{2m-2} \Big[ \widetilde{\iI_\eps}(\<1>_\eps^{\diamond (2m+1)}) \circ \<1>_\eps^{\diamond (2m)} - (2m+1) \; \E \big[ \widetilde{\iI_\eps}(\<1>_\eps^{\diamond (2m)}) \circ \<1>_\eps^{\diamond (2m)} \big] \cdot \<1>_\eps \Big]\;,\\
	&\sigma_{\eps,m}=\eps^{2m-1} \Big[ \widetilde{\iI_\eps}(\<1>_\eps^{\diamond (2m+1)}) \circ \<1>_\eps^{\diamond (2m+2)} - (2m+2) \; \E \big[ \widetilde{\iI_\eps}^{\diamond (2m+1)}) \circ \<1>_\eps^{\diamond (2m+1)} \big] \cdot \<1>_\eps \Big]\;,\\
	&\nu_{\eps,\ell,m}= \eps^{m+\ell-2} \Big[ \widetilde{\iI_\eps}(\<1>_\eps^{\diamond (2m+1)}) \circ \<1>_\eps^{\diamond (2\ell)} \Big]\;.
	\end{split}
	\end{equation}
\end{cor}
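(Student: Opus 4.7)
The corollary is essentially a bookkeeping identity, so my plan is to expand everything explicitly and match coefficients. Substituting the chaos expansions from Proposition~\ref{pr:chaos_expan_processes} into $\<3'0>_\eps = \widetilde{\iI_\eps}(\<3'>_\eps)$ and into $\<2'>_\eps$, I would write
\begin{equation*}
\<3'0>_\eps \circ \<2'>_\eps = \sum_{m,\ell=1}^{n-1} \frac{3 a_m^{(\eps)} a_\ell^{(\eps)}}{\ell m (2m+1)} \eps^{m+\ell-2}\, \widetilde{\iI_\eps}(\<1>_\eps^{\diamond(2m+1)}) \circ \<1>_\eps^{\diamond(2\ell)}\;,
\end{equation*}
and then split the double sum according to whether $\ell = m$, $\ell = m+1$, or neither. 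The third piece immediately produces $\nu_{\eps,\ell,m}$ with the stated coefficient and no renormalisation needed.

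Next I would process the counterterm $-(3 C_\eps^{(2)} + 2 C_\eps^{(3)}) \<1>_\eps$ using the closed form for these constants from Proposition~\ref{pr:C2C3_expression}. Explicitly,
\begin{equation*}
3 C_\eps^{(2)} \<1>_\eps = \sum_{m=1}^{n-1} \frac{3 (a_m^{(\eps)})^2}{m^2 (2m+1)} \eps^{2m-2} (2m+1)\, \E\bigl[\widetilde{\iI_\eps}(\<1>_\eps^{\diamond(2m)}) \circ \<1>_\eps^{\diamond(2m)}\bigr] \<1>_\eps\;,
\end{equation*}
whose coefficient $\frac{3(a_m^{(\eps)})^2}{m^2(2m+1)}$ matches the $\ell=m$ diagonal coefficient above; combining the two groups produces $\tau_{\eps,m}$ exactly. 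A similar algebraic manipulation for
\begin{equation*}
2 C_\eps^{(3)} \<1>_\eps = \sum_{m=1}^{n-2} \frac{3 a_m^{(\eps)} a_{m+1}^{(\eps)}}{m(m+1)(2m+1)}\, (2m+2)\, \eps^{2m-1}\, \E\bigl[\widetilde{\iI_\eps}(\<1>_\eps^{\diamond(2m+1)}) \circ \<1>_\eps^{\diamond(2m+1)}\bigr] \<1>_\eps
\end{equation*}
pairs with the $\ell = m+1$ near-diagonal terms to produce $\sigma_{\eps,m}$ exactly.

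Finally, assembling the three pieces yields the stated chaos expansion. There is no real analytic difficulty: Proposition~\ref{pr:chaos_expan_processes} already provides the chaos decomposition of $\<2'>_\eps$ and $\<3'>_\eps$, Proposition~\ref{pr:C2C3_expression} provides the matching formulas for the renormalisation constants, and the only thing to verify is that the combinatorial prefactors $(2m+1)$ and $(2m+2)$ coming from $C_\eps^{(2)}$ and $C_\eps^{(3)}$ line up with the coefficients of the diagonal/near-diagonal terms of $\<3'0>_\eps \circ \<2'>_\eps$. The mildly delicate step is only the arithmetic check that $\tfrac{3}{m^2(2m+1)}\cdot(2m+1) = \tfrac{3}{m^2}$ and $\tfrac{3}{m(m+1)(2m+1)}\cdot(2m+2) = \tfrac{6}{m(2m+1)}$, so there is no genuine obstacle once the expansions are written down side by side.
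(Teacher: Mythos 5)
Your proposal is correct and is essentially the paper's own argument: the corollary is obtained exactly by substituting the chaos expansions of Proposition~\ref{pr:chaos_expan_processes} into $\<3'0>_\eps \circ \<2'>_\eps$, splitting the double sum into the diagonal $\ell=m$, near-diagonal $\ell=m+1$, and remaining terms, and absorbing $3C_\eps^{(2)}\<1>_\eps$ and $2C_\eps^{(3)}\<1>_\eps$ via Proposition~\ref{pr:C2C3_expression}. Your coefficient checks $\tfrac{3}{m^2(2m+1)}(2m+1)=\tfrac{3}{m^2}$ and $\tfrac{3}{m(m+1)(2m+1)}(2m+2)=\tfrac{6}{m(2m+1)}$ are exactly the matching the paper implicitly relies on, so nothing is missing.
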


\subsection{Proof of Theorem~\ref{th:main_conv_stochastic_obj}}

According to Proposition~\ref{pr:criterion_convergence} and the definition of $\xX$, we need to check the bounds \eqref{eq:criterion_space_reg} for all the seven components in $\Ups_\eps$ and $\Ups_\eps - \Ups$ (with the suitable $\alpha$ given in Table~\ref{table:noise}), and also to check the bounds \eqref{eq:criterion_process} for $\<3'0>_\eps$ (and also $\<3'0>_\eps - \<30>$). For simplicity of the presentation, we only provide details for three of the components $\<1'>_\eps$, $\<3'0>_\eps$ and $\<3'2'>_\eps$. The first one is the simplest, and gives a good illustration of the methods and techniques that are used. The third one is the most complicated and is quite subtle. The middle one is the only one that requires additional H\"older regularity in time. 

Also, for the latter two symbols alluded to above we provide only the first bound (uniform-in-$\eps$ bound) in both \eqref{eq:criterion_space_reg} and \eqref{eq:criterion_process}. The other one (convergence in $\eps$) can be obtained in a similar way. 

\subsubsection[Term Ladder]{The process \texorpdfstring{$\<1'>_\eps$}{Ladder}}

To show that $\E \sup_{t \in [0,1]} \| \<1'>_\eps - \<1>\|_{-\frac{1}{2}-\kappa}^{p} \rightarrow 0$ as $\eps \rightarrow 0$ according to Proposition~\ref{pr:criterion_convergence}, it suffices to show that there exists $\delta>0$ such that
\begin{equation*}
\sup_{\eps \in (0,1)} \E |\widehat{\<1'>}_\eps(t,k)|^{2} \lesssim \scal{k}^{-2+2\kappa-\delta}\;, \qquad \E |\widehat{\<1'>}_\eps(t,k) - \widehat{\<1>}(t,k)|^{2} \lesssim \eps^{\delta} \scal{k}^{-2+2\kappa-\delta}
\end{equation*}
for all sufficiently small $\delta>0$. By the chaos expansion in Proposition~\ref{pr:chaos_expan_processes} and the fact that the first coefficient  satisfies $a_{1}^{(\eps)} \rightarrow 1$ as $\eps \rightarrow 0$, the desired bound will follow from the following proposition. 

\begin{prop}
For all sufficiently small $\delta>0$, we have the bounds
\begin{equation} \label{eq:1_uniform_convergence}
\E |\widehat{\<1>}_\eps(t,k)|^{2} \lesssim \scal{k}^{-2}\;, \quad \E[|\widehat{\<1>}_\eps(t,k)- \widehat{\<1>}(t,k)|^2]\lesssim \eps^{2\delta} \langle k\rangle ^{-2+2\delta}\;,
\end{equation}
and
\begin{equation} \label{eq:1_high}
\eps^{2(m-1)} \E |\widehat{\<1>_\eps^{\diamond (2m-1)}}(t,k)|^{2} \lesssim \eps^{2\delta} \scal{k}^{-2+2\delta}\;, \qquad m \geq 2\;. 
\end{equation}
Both proportionality constants are uniform in $\eps \in (0,1)$, $k \in \Z^3$. 
\end{prop}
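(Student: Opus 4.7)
The plan is to deduce all three estimates directly from the explicit Gaussian representation of $\widehat{\<1>_\eps}(t,k)$ together with the quantitative lower bounds on $\scal{k}_\eps$ coming from Assumption~\ref{as:Q} and Lemma~\ref{le:estimatekeps}.

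For the uniform-in-$\eps$ bound in \eqref{eq:1_uniform_convergence}, I will use the stationary correlation $\E|\widehat{\<1>_\eps}(t,k)|^{2} = \tfrac{1}{2\scal{k}_\eps^{2}}$ from Proposition~\ref{pr:ff_correlation}. The first three items of Assumption~\ref{as:Q} together imply $\qQ(z) \gtrsim z^{2}$ uniformly on $[0,\infty)$ (quadratic near $0$ by the Taylor estimate, dominated by $z^{3+\eta}$ at infinity, and bounded below by continuity and positivity on any compact subinterval), so $\scal{k}_\eps \gtrsim \scal{k}$ and the bound $\scal{k}^{-2}$ is immediate. For the convergence bound I will couple $\<1>_\eps$ and $\<1>$ using the same white noise, so that
\begin{equation*}
\widehat{\<1>_\eps}(t,k) - \widehat{\<1>}(t,k) = \int_{-\infty}^{t}\bigl( e^{-\scal{k}_\eps^{2}(t-s)} - e^{-\scal{k}^{2}(t-s)}\bigr)\,d\widehat{\xi}(s,k)\;,
\end{equation*}
and a direct computation yields
\begin{equation*}
\E\bigl|\widehat{\<1>_\eps}(t,k) - \widehat{\<1>}(t,k)\bigr|^{2} = \frac{\bigl(\scal{k}_\eps^{2}-\scal{k}^{2}\bigr)^{2}}{2\,\scal{k}_\eps^{2}\scal{k}^{2}\bigl(\scal{k}_\eps^{2}+\scal{k}^{2}\bigr)}\;.
\end{equation*}
I will then split into the regimes $\eps|k|\leq 1$ and $\eps|k|> 1$. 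In the first regime the estimate \eqref{eq:Q_origin} gives $|\scal{k}_\eps^{2} - \scal{k}^{2}|\lesssim \eps^{2}|k|^{4}$, so the quantity is controlled by $\eps^{4}|k|^{8}/\scal{k}^{6}$, which is $\lesssim \eps^{2\delta}\scal{k}^{-2+2\delta}$ because $(\eps|k|)^{4-2\delta}\leq 1$. In the second regime the trivial bound $\scal{k}^{-2}$ (from $\scal{k}_\eps \gtrsim \scal{k}$) already suffices, since $\eps\scal{k}\gtrsim 1$ converts the factor $\scal{k}^{-2\delta}$ into $\eps^{2\delta}$ at the cost of $\scal{k}^{-2+2\delta}$.

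For \eqref{eq:1_high} I will invoke the standard Wiener-chaos isometry together with the correlation \eqref{eq:ff_correlation} to write
\begin{equation*}
\E\bigl|\widehat{\<1>_\eps^{\diamond(2m-1)}}(t,k)\bigr|^{2} = \frac{(2m-1)!}{2^{2m-1}} \sum_{\ell_{1}+\cdots+\ell_{2m-1}=k}\prod_{j=1}^{2m-1}\frac{1}{\scal{\ell_{j}}_\eps^{2}}\;,
\end{equation*}
and then trade each factor $\scal{\ell_{j}}_\eps$ for a negative power of $\eps$ via Lemma~\ref{le:estimatekeps}, writing $\scal{\ell_{j}}_\eps \gtrsim \eps^{\delta_{1}}\scal{\ell_{j}}^{1+\delta_{1}}$ for a judiciously chosen $\delta_{1}>0$. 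This distributes the $\eps^{2(m-1)}$ prefactor across the $2m-1$ denominators, leaving a residual power $\eps^{2\delta}$ and a purely spatial convolution of the form $\sum \prod \scal{\ell_{j}}^{-(2+2\delta_{1})}$ that is handled by Lemma~\ref{le:convolution} with exponent $\alpha' = 1-2\delta_{1}$. Setting $\delta_{1} = (1-\delta)/(2m-1)$ makes the powers of $\eps$ and $\scal{k}$ balance to give exactly $\eps^{2\delta}\scal{k}^{-2+2\delta}$. The main technical point to check will be that this choice satisfies the admissibility range $\alpha'\in(0,3/(2m-1))$ required by Lemma~\ref{le:convolution}; this forces $\delta_{1}$ to lie above the threshold $(m-2)/(2m-1)$ which in turn forces $\delta$ to be sufficiently small, but since $m$ ranges over the finite set $\{2,\dots,n-1\}$, a single $\delta>0$ works uniformly. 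The same strategy, now applied to the difference $\widehat{\<1>_\eps^{\diamond(2m-1)}} - 0$ (and to $\widehat{\<1>_\eps}-\widehat{\<1>}$ after one Gaussian contraction) will simultaneously produce the convergence part of \eqref{eq:1_uniform_convergence} with the same rate.
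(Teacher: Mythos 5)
Your treatment of \eqref{eq:1_uniform_convergence} is correct and follows essentially the same route as the paper, with one small variation that works fine: instead of estimating the stochastic-convolution integral via $(1-e^{-x})^{2}\lesssim x^{\delta}$, you compute the variance of the difference in closed form,
\begin{equation*}
\E\bigl|\widehat{\<1>_\eps}(t,k)-\widehat{\<1>}(t,k)\bigr|^{2}
=\frac{\bigl(\scal{k}_\eps^{2}-\scal{k}^{2}\bigr)^{2}}{2\,\scal{k}_\eps^{2}\scal{k}^{2}\bigl(\scal{k}_\eps^{2}+\scal{k}^{2}\bigr)}\;,
\end{equation*}
and then split into $\eps|k|\le 1$ (where \eqref{eq:Q_origin} gives $|\scal{k}_\eps^{2}-\scal{k}^{2}|\lesssim\eps^{2}|k|^{4}$) and $\eps|k|>1$ (where $\scal{k}_\eps\gtrsim\scal{k}$ and $\eps\scal{k}\gtrsim1$ convert $\scal{k}^{-2\delta}$ into $\eps^{2\delta}$). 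The exponents check out, and the uniform bound via $\qQ(z)\gtrsim z^{2}$ is the same as the paper's.

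For \eqref{eq:1_high} your strategy (chaos isometry, Lemma~\ref{le:estimatekeps} to trade $\scal{\ell_j}_\eps$ for powers of $\eps$, then Lemma~\ref{le:convolution}) is exactly the paper's, but your exponent choice $\delta_1=(1-\delta)/(2m-1)$ is wrong for $m\ge 3$. With $\scal{\ell_j}_\eps\gtrsim\eps^{\delta_1}\scal{\ell_j}^{1+\delta_1}$ one gets
\begin{equation*}
\eps^{2(m-1)}\,\E\bigl|\widehat{\<1>_\eps^{\diamond(2m-1)}}(t,k)\bigr|^{2}
\lesssim \eps^{2(m-1)-2(2m-1)\delta_1}\sum_{\pP(2m-1,k)}\prod_{j=1}^{2m-1}\frac{1}{\scal{\ell_j}^{2+2\delta_1}}\;,
\end{equation*}
and Lemma~\ref{le:convolution} (with $n=2m-1$ and $3-\alpha'=2+2\delta_1$) yields the decay $\scal{k}^{-(4-2m+2(2m-1)\delta_1)}$. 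Matching the $\eps$-power to $2\delta$ and the decay to $2-2\delta$ forces $\delta_1=\frac{m-1-\delta}{2m-1}$, i.e.\ $\alpha=1-2\delta_1=\frac{1+2\delta}{2m-1}$, which is the paper's choice; it is not $\frac{1-\delta}{2m-1}$ except when $m=2$. With your value and $m\ge3$ one has $\alpha'=\frac{2m-3+2\delta}{2m-1}\ge\frac{3}{2m-1}$, so Lemma~\ref{le:convolution} is not even applicable — indeed your $\delta_1$ lies \emph{below} the threshold $\frac{m-2}{2m-1}$ that you yourself identified — and formally the resulting bound would be $\eps^{2m-4+2\delta}\scal{k}^{2m-6+2\delta}$, whose $k$-factor does not decay (it is $\scal{k}^{2\delta}$ for $m=3$ and grows for $m\ge4$). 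With the corrected $\delta_1$ all constraints hold for every $m\ge2$ as soon as $\delta<1$ (the restriction $m\ge2$ comes from Lemma~\ref{le:estimatekeps} requiring $\alpha\le1$), so no uniformity-over-$m$ smallness of $\delta$ is actually needed. Finally, the closing sentence about re-running the chaos argument to get the convergence part of \eqref{eq:1_uniform_convergence} is redundant, since your closed-form computation already proves it.
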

\begin{proof}
The first bound in \eqref{eq:1_uniform_convergence} follows directly from the formula \eqref{eq:ff_correlation}. As for the second one, note that 
\begin{equation}
\widehat{\<1>_\eps}(t,k)-\widehat{\<1>}(t,k) 
= \int_{-\infty}^{t} \left( e^{-\langle k\rangle_\eps^2 (t-r)}- e^{-\langle k\rangle^2 (t-r)} \right)\,  \hxi(r,k) {\rm d}r. 
\end{equation}
Hence, taking the second moment of the above expression gives
\begin{equation} \label{eq:1_convergence}
\begin{aligned}
\E |\widehat{\<1>_\eps}(t,k)-\widehat{\<1>}(t,k)|^2
&= \int_{-\infty}^t \left( e^{- (t-r) \langle k\rangle_\eps^2}- e^{- (t-r) \langle k\rangle^2} \right)^2\, {\rm d}r\\
&= \int_{-\infty}^{t} e^{- 2(t-r) \left(\scal{k}_\eps^2 \wedge \scal{k}^{2} \right)} \left( 1 - e^{-(t-r) |\scal{k}_\eps^2 - \scal{k}^2|} \right)^{2} {\rm d}r\;.
\end{aligned}
\end{equation}
If $\eps \scal{k} \geq 1$, then we bound the term in the parenthesis above by $1$. Integrating $r$ out and using $\scal{k} \lesssim \scal{k}_\eps$, we obtain
\begin{equation*}
\E |\widehat{\<1>_\eps}(t,k) - \widehat{\<1>}(t,k)|^{2} \lesssim \scal{k}^{-2} \lesssim \eps^{2\delta} \scal{k}^{-2+2\delta}\;, \quad \eps \scal{k} \geq 1\;.
\end{equation*}
If $\eps \scal{k} \leq 1$, we use~\eqref{eq:Q_origin} to get
\begin{equation*}
\left( 1 - e^{-(t-r) |\scal{k}_\eps^2 - \scal{k}^2|} \right)^{2} \lesssim (t-r)^{\delta} |\scal{k}_\eps^2 - \scal{k}^2|^{\delta} \lesssim (t-r)^{\delta} \eps^{2\delta} \scal{k}^{4\delta}\;.
\end{equation*}
Substituting it back into the right hand side of \eqref{eq:1_convergence} and integrating out $r$, we again get the bound $\eps^{2\delta} \scal{k}^{-2+2\delta}$ for a possibly slightly different $\delta$. This completes the second claim in \eqref{eq:1_uniform_convergence}. 

We now turn to \eqref{eq:1_high}. We have the identity
\begin{equation*}
\widehat{\<1>_\eps^{\diamond (2m-1)}}(t,k) = \sum_{\pP(2m-1,k)} \widehat{\<1>_\eps}(t,\ell_1) \diamond \cdots \diamond \widehat{\<1>_\eps}(t,\ell_{2m-1})\;,
\end{equation*}
where the notation $\pP(2m-1,k)$ was introduced at the beginning of Section~\ref{sec:Notations}. By Wick's formula and the correlation relation \eqref{eq:ff_correlation}, we have
\begin{equation*}
\E |\widehat{\<1>_\eps^{\diamond (2m-1)}}(t,k)|^{2} = (2m-1)! \sum_{\pP(2m-1,k)} \prod_{j=1}^{2m-1} \E |\widehat{\<1>_\eps}(t,\ell_j)|^{2} \lesssim \sum_{\pP(2m-1,k)} \prod_{j=1}^{2m-1} \frac{1}{\scal{\ell_j}_\eps^2}\;.
\end{equation*}
By Lemma~\ref{le:estimatekeps}, we have $\scal{\ell_j}_\eps^2 \gtrsim \eps^{1-\alpha} \scal{\ell_j}^{3-\alpha}$ for $\alpha \in (0,1)$. Substituting it into the above bound, we get
\begin{equation*}
\eps^{2(m-1)} \E |\widehat{\<1>_\eps^{\diamond (2m-1)}}(t,k)| \lesssim \eps^{2(m-1)-(2m-1)(1-\alpha)} \sum_{\pP(2m-1,k)} \prod_{j=1}^{2m-1} \frac{1}{\scal{\ell_j}^{3-\alpha}}\;.
\end{equation*}
Taking $\alpha = \frac{1+2\delta}{2m-1}$ for $\delta$ sufficiently small and applying Lemma~\ref{le:convolution}, we obtain the bound \eqref{eq:1_high}. Note that the requirement $\alpha \in (0,1)$ of the above step is satisfied for this choice of $\alpha$ only if $m \geq 2$. This completes the proof of the proposition. 
\end{proof}

\subsubsection[Term Ladder]{The process \texorpdfstring{$\<3'0>_\eps$}{Ladder}}

We now treat the term $\<3'0>_\eps$. According to the definition of the norm $\xX$ in \eqref{eq:norm_enhanced_noise}, we need to show the convergence to $\<30>$ both in $\cC([0,1], \bB^{\frac{1}{2}-\kappa}(\T^3))$ and in $\cC^{\frac{1}{8}}([0,1]; \bB^{\frac{1}{4}-\kappa}(\T^3))$. 

Taking the Fourier transform of $\<3'0>_\eps$ and using Proposition~\ref{pr:chaos_expan_processes}, we get
\begin{equation*}
\widehat{\<3'0>}_\eps(t,k) = \sum_{m=1}^{n-1} \frac{3 a_m^{(\eps)}}{m (2m+1)} \widehat{\tau_{\eps,m}}(t,k)\;, 
\end{equation*}
where
\begin{equation*}
\widehat{\tau_{\eps,m}}(t,k) = \eps^{m-1} \int_{-\infty}^{t} e^{-(t-r) \scal{k}_\eps^2} \cdot \widehat{\<1>_\eps^{\diamond (2m+1)}}(r,k) {\rm d}r\;.
\end{equation*}
By Proposition~\ref{pr:criterion_convergence} and the fact that $a_{1}^{(\eps)} \rightarrow 1$, the two desired convergences will follow from the following two propositions. 

\begin{prop} \label{pr:30_bound}
	For all sufficiently small $\delta$, we have the bounds
	\begin{equation} \label{eq:30_uniform_converge}
	\E |\widehat{\tau_{\eps,1}}(t,k)|^{2} \lesssim \frac{1}{\scal{k}^{4}}\;, \quad \E |\widehat{\tau_{\eps,1}}(t,k) - \widehat{\<30>}(t,k)|^{2} \lesssim \frac{\eps^{2\delta}}{\scal{k}^{4-2\delta}}\;,
	\end{equation}
	and
	\begin{equation} \label{eq:30_high}
	\E |\widehat{\tau_{\eps,m}}(t,k)|^{2} \lesssim \frac{\eps^{2\delta}}{\scal{k}^{4-2\delta}}\;, \quad m \geq 2\;.
	\end{equation}
	All proportionality constants are independent of $\eps$. 
\end{prop}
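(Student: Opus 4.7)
My plan is to compute $\E|\widehat{\tau_{\eps,m}}(t,k)|^2$ by a direct chaos expansion and then close with the convolution lemmas already available. Since $\<1>_\eps^{\diamond(2m+1)}$ lives in the $(2m+1)$-th Wiener chaos and $\tau_{\eps,m}$ is space-time stationary, Wick's isometry together with the covariance \eqref{eq:ff_correlation} gives
\begin{equation*}
\E|\widehat{\tau_{\eps,m}}(t,k)|^2 \asymp \eps^{2(m-1)} \sum_{\pP(2m+1,k)} \frac{1}{\scal{k}_\eps^2 \bigl(\scal{k}_\eps^2 + \sum_j \scal{\ell_j}_\eps^2\bigr)\prod_j \scal{\ell_j}_\eps^2}\;,
\end{equation*}
where the middle factor arises from performing the two time integrals of $e^{-(t-r)\scal{k}_\eps^2}e^{-(t-r')\scal{k}_\eps^2}$ against the exponential covariance $e^{-|r-r'|\sum_j \scal{\ell_j}_\eps^2}$. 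All bounds will be extracted from this one formula by choosing how to distribute the extra ``time-integral factor'' $(\scal{k}_\eps^2+\sum \scal{\ell_j}_\eps^2)^{-1}$ and by applying Lemma~\ref{le:estimatekeps} to exchange spatial regularity for powers of $\eps$.

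For the bound $\lesssim \scal{k}^{-4}$ in \eqref{eq:30_uniform_converge} (case $m=1$) I would simply use $\scal{\cdot}_\eps \gtrsim \scal{\cdot}$ (which follows from $\qQ(z)\gtrsim z^2$, itself a consequence of the first three items in Assumption~\ref{as:Q}) together with the AM--GM inequality $(\sum_j\scal{\ell_j}^2)^{-1} \lesssim \prod_j \scal{\ell_j}^{-2/3}$. Lemma~\ref{le:convolution} with $n=3$ and $\alpha=1/3$ then controls the resulting sum $\sum_{\pP(3,k)} \prod_j \scal{\ell_j}^{-8/3}$ by $\scal{k}^{-2}$, which together with the outer $\scal{k}^{-2}$ gives the desired $\scal{k}^{-4}$.

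For the high-chaos bound \eqref{eq:30_high} with $m\geq 2$, the $\eps^{2\delta}$-gain will come entirely from Lemma~\ref{le:estimatekeps}. I plan to bound $(\scal{k}_\eps^2+\sum_j\scal{\ell_j}_\eps^2)^{-1}$ by $\prod_j\scal{\ell_j}_\eps^{-2/(2m+1)}$ via AM--GM, and then apply $\scal{\ell_j}_\eps \gtrsim \eps^{(1-\alpha)/2}\scal{\ell_j}^{(3-\alpha)/2}$ to all the resulting $\scal{\ell_j}_\eps$-factors with a common exponent $\alpha\in(0,1)$. A single application of Lemma~\ref{le:convolution} then produces a bound of the form $\eps^{(2m+2)\alpha-4}\scal{k}^{-(8-(2m+2)\alpha)}$, and the choice $\alpha = (2+\delta)/(m+1)$ simultaneously yields the $\eps^{2\delta}$ factor and the spatial decay $\scal{k}^{-4+2\delta}$. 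This value of $\alpha$ lies in the admissible window $\bigl(3/(2m+2),\,3/(m+1)\bigr)$ of Lemma~\ref{le:convolution} precisely because $m\geq 2$, and is $\leq 1$ as required by Lemma~\ref{le:estimatekeps}; the same constraints explain why $m=1$ must be handled by the ad hoc argument above.

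For the convergence $\E|\widehat{\tau_{\eps,1}}-\widehat{\<30>}|^2 \lesssim \eps^{2\delta}\scal{k}^{-4+2\delta}$, I would proceed by telescoping, replacing each of the occurrences of $\scal{\cdot}_\eps$ (the one in the outer heat kernel and the two hidden inside the free-field covariances, the third being absorbed by symmetry of $\pP(3,k)$) by $\scal{\cdot}$ one at a time. Each replacement produces, after performing the time integrals as before, a factor of the form $|\scal{\cdot}_\eps^2-\scal{\cdot}^2|^\delta$, which is bounded by $\eps^{2\delta}\scal{k}^{4\delta}$ in the regime $\eps\scal{k}\leq 1$ and by a cruder pointwise bound otherwise, following the template of the second inequality in \eqref{eq:1_uniform_convergence}. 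The remaining summand retains a convolution structure close enough to the uniform-in-$\eps$ case that the same argument (with $\alpha$ nudged by $O(\delta)$) yields the spatial decay. The main (modest) obstacle I anticipate is precisely the book-keeping of this last step, ensuring that the $\eps^{2\delta}$-gain survives each telescoped replacement while the convolution estimate still closes at $-4+2\delta$.
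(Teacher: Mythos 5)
Your proposal is correct and follows essentially the same route as the paper: the identical Wick/covariance computation yields the same intermediate sum, and for $m\geq 2$ the same AM--GM distribution of the time-integral factor, Lemma~\ref{le:estimatekeps} with $\alpha=(2+\delta)/(m+1)$, and Lemma~\ref{le:convolution} close the bound (your symmetric $2/3$-per-factor splitting for the $m=1$ uniform bound and the telescoping sketch for the convergence estimate, whose details the paper omits anyway, are harmless variants). One small correction: the window $\bigl(3/(2m+2),3/(m+1)\bigr)$ for Lemma~\ref{le:convolution} actually contains your choice of $\alpha$ for every $m\geq 1$; what forces $m\geq 2$ is solely the requirement $\alpha\leq 1$ in Lemma~\ref{le:estimatekeps}, which you also note.
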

\begin{prop} \label{pr:30_bound_time}
	For all sufficiently small $\delta$, we have the bounds
	\begin{equation} \label{eq:30_uniform_converge_time}
	\begin{split}
	&\E |\widehat{\tau_{\eps,1}}(t,k) - \widehat{\tau_{\eps,1}}(s,k)|^{2} \lesssim (t-s)^{\frac{1}{4}} \scal{k}^{-\frac{7}{2}}\;,\\
	&\E |\widehat{\tau_{\eps,1}}(t,k) - \widehat{\tau_{\eps,1}}(s,k) - \big( \widehat{\<30>}(t,k) - \widehat{\<30>}(s,k) \big)|^{2} \lesssim \eps^{2\delta} (t-s)^{\frac{1}{4}} \scal{k}^{-\frac{7}{2}+2\delta}\;,
	\end{split}
	\end{equation}
	and
	\begin{equation} \label{eq:30_high_time}
	\E |\widehat{\tau_{\eps,m}}(t,k) - \widehat{\tau_{\eps,m}}(s,k)|^{2} \lesssim \eps^{2\delta} (t-s)^{\frac{1}{4}} \scal{k}^{-\frac{7}{2}-2\delta}\;. 
	\end{equation}
	The bounds are uniform in $\eps \in (0,1)$, $k \in \Z^3$ and $0 \leq s \leq t \leq 1$. 
\end{prop}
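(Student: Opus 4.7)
The plan is to reduce both bounds to a deterministic estimate on a double time integral combined with a combinatorial sum over pairings, and then to balance the interpolation exponents using Lemmas~\ref{le:estimatekeps} and~\ref{le:convolution}. Introduce the kernel
\begin{equation*}
K(t,s,r) := \big(e^{-(t-r)\scal{k}_\eps^2} - e^{-(s-r)\scal{k}_\eps^2}\big) \mathbf{1}_{r \leq s} + e^{-(t-r)\scal{k}_\eps^2}\, \mathbf{1}_{s < r \leq t},
\end{equation*}
so that $\widehat{\tau_{\eps,m}}(t,k) - \widehat{\tau_{\eps,m}}(s,k) = \eps^{m-1}\int K(t,s,r)\, \widehat{\<1>_\eps^{\diamond(2m+1)}}(r,k)\,dr$. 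Since the integrand lies in the Wiener chaos of order $(2m+1)$, Wick's formula together with \eqref{eq:ff_correlation} yields
\begin{equation*}
\E\big|\widehat{\tau_{\eps,m}}(t,k) - \widehat{\tau_{\eps,m}}(s,k)\big|^2 = \eps^{2(m-1)} (2m+1)! \sum_{\pP(2m+1,k)} \Big(\prod_{j} \tfrac{1}{2\scal{\ell_j}_\eps^2}\Big)\, J(\scal{k}_\eps^2, \mu, t-s),
\end{equation*}
where $\mu = \sum_j \scal{\ell_j}_\eps^2$ and $J := \iint K(t,s,r)\, K(t,s,r')\, e^{-|r-r'|\mu}\,dr\,dr'$.

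The key kernel estimate is $J \lesssim (t-s)^{1/4}\, \scal{k}_\eps^{-3/2}\, \mu^{-1}$. This will follow from the elementary inequality $|1-e^{-x}| \leq x^\theta$ for $\theta \in [0,1]$, applied with $\theta=1/8$ on each of the two exponential differences in the region $r,r' \leq s$ (giving $(t-s)^{1/4}$ in total) and with $\theta=1/4$ for the region $r,r' \in (s,t]$, combined with the identity $\int_{\R} e^{-|r-r'|\mu}\,dr' = 2/\mu$ and a Cauchy--Schwarz bound on the mixed-region cross term. Plugging this in for $m=1$, applying AM--GM as $\mu \geq 3(\prod_j \scal{\ell_j}_\eps^2)^{1/3}$, and using $\scal{\ell}_\eps \gtrsim \scal{\ell}$ from Lemma~\ref{le:estimatekeps} at $\alpha=1$, the problem reduces to bounding $\sum_{\pP(3,k)} \prod_{j=1}^3 \scal{\ell_j}^{-8/3}$, which Lemma~\ref{le:convolution} (with $n=3$, $\alpha=1/3$) controls by $\scal{k}^{-2}$. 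The target $(t-s)^{1/4}\scal{k}^{-7/2}$ in the first bound of \eqref{eq:30_uniform_converge_time} then follows.

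For $m \geq 2$, the prefactor $\eps^{2(m-1)}$ provides room to convert some $\scal{\ell}_\eps$ decay into positive powers of $\eps$ via Lemma~\ref{le:estimatekeps} applied with $\alpha$ slightly below $1$. The exponent budget is delicate: Lemma~\ref{le:convolution} requires $\alpha \in (0,\tfrac{3}{2m+1})$, and the $\eps$-cost from the conversion $\scal{\ell}_\eps^{-2} \lesssim \eps^{-(1-\alpha)}\scal{\ell}^{-(3-\alpha)}$ must stay strictly below $2(m-1)$. To secure the necessary slack, one uses a two-parameter interpolation of $J$ between the bounds $\scal{k}_\eps^{-3/2}\mu^{-1}$ and $\scal{k}_\eps^{-2}\mu^{-3/4}$ (obtained by redistributing the $(t-s)^{1/4}$-cost between the $\lambda$- and $\mu$-factors, via the alternative interpolation $\int_s^t e^{-(t-r)\lambda}(1-e^{-(t-r)\mu})\,dr \lesssim (t-s)^{1/4}\lambda^{-1}\mu^{-3/4}$). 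This combined with the AM--GM step produces both a net $\eps^{2\delta}$ and the desired decay $\scal{k}^{-7/2-2\delta}$.

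The second bound in \eqref{eq:30_uniform_converge_time}, comparing with the limit, is obtained via the telescoping
\begin{equation*}
\widehat{\tau_{\eps,1}} - \widehat{\<30>} = (K_\eps - K_0) \ast \widehat{\<1>_\eps^{\diamond 3}} + K_0 \ast \big(\widehat{\<1>_\eps^{\diamond 3}} - \widehat{\<1>^{\diamond 3}}\big),
\end{equation*}
where $K_\eps(u) = e^{-u\scal{k}_\eps^2}\mathbf{1}_{u\geq 0}$ and $K_0(u) = e^{-u\scal{k}^2}\mathbf{1}_{u\geq 0}$. The first term gains a factor $\eps^{2\delta}$ via the kernel-difference estimate obtained as in \eqref{eq:1_convergence}; the second via further telescoping of $\<1>_\eps^{\diamond 3} - \<1>^{\diamond 3}$ into three terms, each containing one factor $\widehat{\<1>_\eps}-\widehat{\<1>}$ multiplied by Wick products of the two fields, whose second moments are controlled by the bound $\eps^{2\delta}\scal{k}^{-2+2\delta}$ from \eqref{eq:1_uniform_convergence} together with Wick's formula. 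The main obstacle throughout is the tight exponent balancing: for $m=1$ the $\scal{k}^{-7/2}$ decay is saturated by the choice $\alpha=1/3$ in Lemma~\ref{le:convolution}, and for $m\geq 2$ the interplay between the two-parameter kernel interpolation, the convolution condition, and the $\eps$-conversion afforded by Lemma~\ref{le:estimatekeps} leaves essentially no slack.
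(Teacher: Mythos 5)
Your treatment of the first bound in \eqref{eq:30_uniform_converge_time}, and your sketch of the comparison with the limiting object, follow essentially the paper's route: the paper splits the increment into the integrals over $(-\infty,s]$ and $(s,t]$ (your kernel $K$ merges the two), uses Wick's formula and \eqref{eq:ff_correlation} to reduce to a deterministic sum, extracts $(t-s)^{\theta}\scal{k}_\eps^{-2+2\theta}$ with $\theta=\tfrac14$, and then reruns the analysis of Proposition~\ref{pr:30_bound}. Your kernel estimate $J\lesssim (t-s)^{1/4}\scal{k}_\eps^{-3/2}\mu^{-1}$ is a slightly weaker but sufficient version of the paper's factor $(t-s)^{1/4}\scal{k}_\eps^{-3/2}\big(\scal{k}_\eps^2+\mu\big)^{-1}$, and your $m=1$ computation (AM--GM, Lemma~\ref{le:estimatekeps} at $\alpha=1$, Lemma~\ref{le:convolution} with $n=3$, $\alpha=\tfrac13$) correctly gives $(t-s)^{1/4}\scal{k}^{-7/2}$.

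The problem is in the $m\ge2$ case. First, the displayed "alternative interpolation" $\int_s^t e^{-(t-r)\scal{k}_\eps^2}\big(1-e^{-(t-r)\mu}\big)\,{\rm d}r\lesssim (t-s)^{1/4}\scal{k}_\eps^{-2}\mu^{-3/4}$ is false as stated: its left-hand side is increasing in $\mu$ and converges to $\int_s^t e^{-(t-r)\scal{k}_\eps^2}\,{\rm d}r>0$ as $\mu\to\infty$, so it admits no negative power of $\mu$. (The endpoint bound $J\lesssim (t-s)^{1/4}\scal{k}_\eps^{-2}\mu^{-3/4}$ is itself true, via $\min(t-s,\mu^{-1})\le (t-s)^{1/4}\mu^{-3/4}$, but that is a different argument.) Second, and more importantly, no interpolation of this type can deliver simultaneously the factor $\eps^{2\delta}$ and the decay $\scal{k}^{-7/2-2\delta}$: the exchange between converting $\scal{\ell_j}_\eps$-decay into powers of $\eps$ (Lemma~\ref{le:estimatekeps}) and into powers of $\scal{k}$ (via AM--GM and Lemma~\ref{le:convolution}) is exactly conserved, and on the critical scale $\scal{k}\sim\scal{\ell_j}\sim\eps^{-1}$ the method produces a contribution of size $(t-s)^{1/4}\scal{k}^{-7/2}$ with no additional smallness --- consistent with \eqref{eq:30_high}, whose exponent is $-4+2\delta$ rather than $-4-2\delta$. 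What your first kernel bound combined with the Proposition~\ref{pr:30_bound} scheme (choosing $\alpha=\frac{2+\delta}{m+1}$) actually yields for $m\ge2$ is $\eps^{2\delta}(t-s)^{1/4}\scal{k}^{-7/2+2\delta}$; this is also what the paper's own proof gives ("taking $\theta=\tfrac14$ in the situation of \eqref{eq:30_intermediate}") and what Proposition~\ref{pr:criterion_convergence} requires when paired with \eqref{eq:30_high}, the sign in front of $2\delta$ in \eqref{eq:30_high_time} being evidently a slip in the statement. So drop the two-parameter interpolation and claim only the $+2\delta$ version; the literal exponent $-\tfrac72-2\delta$ together with $\eps^{2\delta}$ is not justified by your argument.
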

We first prove Proposition~\ref{pr:30_bound}.
\begin{proof} [Proof of Proposition~\ref{pr:30_bound}]
	We start with the expression
	\begin{equation*}
	\widehat{\tau_{\eps,m}}(t,k) = \eps^{m-1} \int_{-\infty}^{t} e^{-\scal{k}_{\eps}^{2}(t-s)} \sum_{\pP(2m+1,k)} \widehat{\<1>_\eps}(s,\ell_1) \diamond \cdots \diamond \widehat{\<1>_\eps}(s,\ell_{2m+1}) {\rm d}s\;. 
	\end{equation*}
	By Wick's formula and the correlation relation \eqref{eq:ff_correlation}, we have
	\begin{equation} 
	\begin{split}
	\E |\widehat{\tau_{\eps,m}}(t,k)|^{2} = &2 \times (2m+1)! \times \eps^{2(m-1)}\\
	&\times \int_{-\infty}^{t} \int_{-\infty}^{s} e^{-(2t-s-r) \scal{k}_{\eps}^{2}} \sum_{\pP(2m+1,k)} \prod_{j=1}^{2m+1} \frac{e^{-\scal{\ell_j}_{\eps}^{2}(s-r)}}{2 \scal{\ell_j}_\eps^2} {\rm d}r {\rm d}s\;.
	\end{split}
	\end{equation}
	Succesively integrating out $r$ and $s$, we obtain the bound
	\begin{equation} \label{eq:30_intermediate}
	\E |\widehat{\tau_{\eps,m}}(t,k)|^{2} \lesssim_m \frac{\eps^{2(m-1)}}{\scal{k}_\eps^2} \sum_{\pP(2m+1,k)} \bigg[ \Big( \prod_{j=1}^{2m+1} \scal{\ell_j}_\eps^2 \Big)^{-1} \Big( \scal{k}_\eps^{2} + \sum_{j=1}^{2m+1} \scal{\ell_j}_\eps^2 \Big)^{-1} \bigg]\;.
	\end{equation}
	For $m=1$, we have
	\begin{equation*}
	\E |\widehat{\tau_{\eps,1}}(t,k)|^{2} \lesssim \frac{1}{\scal{k}^{4-\delta}} \sum_{\ell_1 + \ell_2 + \ell_3 = k} \frac{1}{\scal{\ell_1}^{2+\delta} \scal{\ell_2}^2 \scal{\ell_3}^2} \lesssim \frac{1}{\scal{k}^4}\;, 
	\end{equation*}
	which is the first bound in \eqref{eq:30_uniform_converge}. The second bound in~\eqref{eq:30_uniform_converge} can be obtained in a similar way and we omit the details. 
	
	We now turn to the case $m \geq 2$. By Lemma~\ref{le:estimatekeps} for $\alpha\in (0,1)$ we have
	\begin{equation*}
	\Big( \prod_{j=1}^{2m+1} \scal{\ell_j}_\eps^2 \Big) \Big( \scal{k}_\eps^2 + \sum_{j=1}^{2m+1} \scal{\ell_j}_\eps^2 \Big) \gtrsim \prod_{j=1}^{2m+1} \scal{\ell_j}_\eps^{2 + \frac{2}{2m+1}} \gtrsim \prod_{j=1}^{2m+1} \big( \eps^{1-\alpha} \scal{\ell_j}^{3-\alpha} \big)^{1+\frac{1}{2m+1}}\;.
	\end{equation*}
    Substituting it back into \eqref{eq:30_intermediate}, we get
	\begin{equation*}
	\E |\widehat{\tau_{\eps,m}}(t,k)|^{2} \lesssim \frac{\eps^{(2m+2)\alpha -4}}{\scal{k}_\eps^2}\sum_{\pP(2m+1,k)} \bigg( \prod_{j=1}^{2m+1} \frac{1}{\scal{\ell_j}^{3-\frac{(2m+2)\alpha-3}{2m+1}}} \bigg)\;.
	\end{equation*}
	Now we take $\alpha = \frac{2+\delta}{m+1}$ for sufficiently small $\delta$ so that $(2m+2)\alpha-4 = 2\delta$. This $\alpha$ belongs to $(0,1)$ only when $m \geq 2$. Also with this choice of $\alpha$, the exponent
	\begin{equation*}
	\frac{(2m+2)\alpha-3}{2m+1} = \frac{1+2\delta}{2m+1} < \frac{3}{2m+1}
	\end{equation*}
	satisfies the hypothesis of Lemma~\ref{le:convolution}. A direct application of that lemma gives the bound \eqref{eq:30_high}, and concluding the proof of the lemma.  
\end{proof}

We now come to the proof of Proposition~\ref{pr:30_bound_time}.
\begin{proof}
	Again, we prove the uniform boundedness only, that is, the first bound in \eqref{eq:30_uniform_converge_time} and \eqref{eq:30_high_time}. For $m \geq 1$, and $t \geq s$ we have
	\begin{equation} \label{eq:30_time_split}
	\begin{split}
	\widehat{\tau_{\eps,m}}(t,k) - \widehat{\tau_{\eps,m}}(s,k) = &\eps^{m-1} \int_{-\infty}^{s} \big( e^{-(t-r)\scal{k}_\eps^2} - e^{-(s-r)\scal{k}_\eps^2} \big) \; \widehat{\<1>_\eps^{\diamond (2m+1)}}(r,k) {\rm d}r\\
	&+ \eps^{m-1} \int_{s}^{t} e^{-(t-r) \scal{k}_\eps^2} \; \widehat{\<1>_\eps^{\diamond (2m+1)}}(r,k) {\rm d}r\;.
	\end{split}
	\end{equation}
	For the first term on the right hand side above, since
	\begin{equation*}
	\E \Big( \widehat{\<1>_\eps^{\diamond (2m+1)}}(r_1,k) \widehat{\<1>_\eps^{\diamond (2m+1)}}(r_2,-k) \Big) \lesssim \sum_{\pP(2m+1,k)} \prod_{j=1}^{2m+1} \frac{e^{-|r_1 - r_2| \scal{\ell_j}_\eps^2}}{\scal{\ell_j}_\eps^2}\;, 
	\end{equation*}
	taking the second moment of that term and successivly integrating out $r_1$ and $r_2$, we see that for every $\theta \in [0,1]$, we have the bound
	\begin{equation} \label{eq:30_time_intermediate}
	\begin{split}
	&\phantom{111}\E \Big| \eps^{m-1} \int_{-\infty}^{s} \big( e^{-(t-r) \scal{k}_\eps^2} - e^{-(s-r) \scal{k}_\eps^2} \big) \widehat{\<1>_\eps^{\diamond (2m+1)}}(r,k) {\rm d}r \Big|^{2}\\
	&\lesssim \eps^{2(m-1)} (t-s)^{\theta} \scal{k}_{\eps}^{-2+2\theta} \sum_{\pP(2m+1,k)} \bigg[ \frac{1}{\scal{k}_\eps^2 + \sum_{j=1}^{2m+1} \scal{\ell_j}_\eps^2}  \cdot \prod_{j=1}^{2m+1} \frac{1}{\scal{\ell_j}_\eps^2} \bigg]\;.
	\end{split}
	\end{equation}
	The second moment of the second term on the right hand side of \eqref{eq:30_time_split} also satisfies \eqref{eq:30_time_intermediate}. Hence, we arrive in the same situation of \eqref{eq:30_intermediate}. Taking $\theta=\frac{1}{4}$ gives the desired bounds. 
\end{proof}

\subsubsection[Term Ladder]{The process \texorpdfstring{$\<3'2'>_\eps$}{Ladder}}\label{sec:32}

We now come to the term $\<3'2'>_\eps$. According to the definition of the norm on $\xX$ in~\eqref{eq:norm_enhanced_noise} and Proposition~\ref{pr:criterion_convergence}, we need to show that
\begin{equation}\label{e:32_bound}
\sup_{k \in \Z^3} \Big( \scal{k}^{2-2\kappa} \E |\widehat{\<3'2'>_\eps}(t,k) - \widehat{\<32>}(t,k)|^{2} \Big) \rightarrow 0
\end{equation}
as $\eps \rightarrow 0$. According to the decomposition of $\<3'2'>_\eps$ in Corollary~\ref{cor:32_chaos_expan}, it suffices to consider the Fourier tranforms of $\tau_{\eps,m}$, $\sigma_{\eps,m}$ and $\nu_{\eps,\ell,m}$ defined therein. We have the following proposition. 


\begin{prop} \label{pr:32_Fourier_bound}
	For $\tau_{\eps,m}$, $\sigma_{\eps,m}$ and $\nu_{\eps,\ell,m}$ given in Corollary~\ref{cor:32_chaos_expan}, we have the following bounds. There exists a universal constant $C_0$ and $\delta>0$ such that for every $\Lambda > 0$, there exists $C(\Lambda)$ such that
	\begin{equation} \label{eq:32_Fourier_bound}
	\begin{split}
	\E |\widehat{\tau_{\eps,1}}(t,k) - \widehat{\<32>}(t,k)|^{2} &\leq \Big( C(\Lambda) \eps^{2\delta} + \frac{C_0}{\Lambda} \Big) \scal{k}^{-2+2\kappa}\;,\\
	\E |\widehat{\tau_{\eps,m}}(t,k)|^{2} &\leq \Big( C(\Lambda) \eps^{2\delta} + \frac{C_0}{\Lambda} \Big) \scal{k}^{-2+2\kappa}\;, \quad m \geq 2\;.
	\end{split}
	\end{equation}
	$\widehat{\sigma_{\eps,m}}$ and $\widehat{\nu_{\eps,\ell,m}}$ satisfy the second bound above for all $\ell$ and $m$. 
\end{prop}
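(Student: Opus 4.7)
The plan is to decompose each of $\tau_{\eps,m}$, $\sigma_{\eps,m}$, $\nu_{\eps,\ell,m}$ into Wiener chaos components by applying Wick's product formula to the products $\<1>_\eps^{\diamond a}\cdot \<1>_\eps^{\diamond b}$ appearing inside the resonance product (after convolving one factor by $\widetilde{\iI_\eps}$). This produces, for each object and each chaos order $n$, an explicit deterministic Fourier kernel $K_\eps^{(n)}(t;\ell_1,\ldots,\ell_n)$ whose support is restricted by the resonance cutoff from \eqref{eq:resonance_set}. By orthogonality of Wiener chaoses and the Wiener--It\^o isometry, the variance of each Fourier coefficient reduces to $\sum_n n!\sum_{\ell_1+\cdots+\ell_n=k}|K_\eps^{(n)}(t;\ell_1,\ldots,\ell_n)|^2$, and it suffices to bound each chaos component separately by $(C(\Lambda)\eps^{2\delta}+C_0/\Lambda)\scal{k}^{-2+2\kappa}$.

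For $\tau_{\eps,1}$ the expansion produces chaoses of orders $n\in\{1,3,5\}$; the role of the subtracted expectation $-3\,\E[\widetilde{\iI_\eps}(\<1>_\eps^{\diamond 2})\circ \<1>_\eps^{\diamond 2}]\cdot\<1>_\eps$ is precisely to cancel the divergent contribution to the $n=1$ kernel coming from the two-by-two pairings of $\<1>_\eps^{\diamond 3}$ against $\<1>_\eps^{\diamond 2}$. The limit $\<32>$ admits the analogous decomposition with kernels $K_0^{(n)}$. For each $n$ I would introduce a Fourier cutoff $\Lambda$ and split $\sum_{\ell_1+\cdots+\ell_n=k}|K_\eps^{(n)}-K_0^{(n)}|^2$ into contributions from $\max_j|\ell_j|\leq\Lambda$ and $\max_j|\ell_j|>\Lambda$. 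On the first region, only finitely many summands appear and the integrand is pointwise continuous in $\eps$, so a rate of $\eps^{2\delta}$ follows by the same comparison arguments used in the proofs of Propositions~\ref{pr:30_bound} and~\ref{pr:30_bound_time} (comparing $\scal{k}_\eps$ and $\scal{k}$ via~\eqref{eq:Q_origin} and dominated convergence for the time integrals); this yields the $C(\Lambda)\eps^{2\delta}\scal{k}^{-2+2\kappa}$ part of the bound. On the second region both $|K_\eps^{(n)}|^2$ and $|K_0^{(n)}|^2$ are bounded uniformly by the convolution/resonance estimates of Lemmas~\ref{le:convolution} and~\ref{le:convolution_resonance}, where trading one factor $\scal{\ell_{j^\ast}}^{-s}$ for $\Lambda^{-s}$ (at the cost of a matching loss in the remaining exponent) yields the $C_0\Lambda^{-1}\scal{k}^{-2+2\kappa}$ part.

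For $\tau_{\eps,m}$ with $m\geq 2$, and for $\sigma_{\eps,m}$ and $\nu_{\eps,\ell,m}$, there is no limit to subtract and the goal is simply that the full Fourier coefficient is of order $C(\Lambda)\eps^{2\delta}+C_0/\Lambda$. Here the extra $\eps$-prefactor ($\eps^{2(m-1)}$, $\eps^{2m-1}$, $\eps^{m+\ell-2}$) is combined with the inequality $\scal{\ell}_\eps\gtrsim \eps^{(1-\alpha)/2}\scal{\ell}^{(3-\alpha)/2}$ from Lemma~\ref{le:estimatekeps} applied to each of the $\scal{\ell_j}_\eps^{-2}$ factors produced by the pairings. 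Converting the prefactor and the singular Fourier weights into the form $\eps^{c(m,n,\alpha)}\prod_j\scal{\ell_j}^{-(3-\alpha)-\textrm{(corr.)}}$ and choosing $\alpha>0$ sufficiently small makes $c(m,n,\alpha)>0$, precisely because $m\geq 2$ (or because of the stronger $\eps$-prefactor in the $\sigma$ and $\nu$ cases). The resulting convolution sum is then split at the threshold $\Lambda$ as above, and the chaos-$1$ component of $\tau_{\eps,m}$ again benefits from the renormalisation cancellation described for $m=1$.

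The main obstacle lies in the chaos-$1$ component of $\tau_{\eps,1}-\<32>$: both the unrenormalised resonance product and the subtracted counterterm are individually logarithmically divergent, and only their difference admits a meaningful limit. For this component the $\Lambda$-truncation strategy is unavoidable because the pointwise difference $K_\eps^{(1)}-K_0^{(1)}$ does not decay fast enough to yield an $\eps^{2\delta}$-rate uniform in the summation variable; instead one must use the resonance constraint $\ell\sim -\ell$ from Remark~\ref{rem:resonance_restriction} together with Lemma~\ref{le:convolution_resonance} to show that the tail of the difference in $|\ell|$ is summable and controlled by $\Lambda^{-1}\scal{k}^{-2+2\kappa}$. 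This mechanism is exactly what forces the conclusion to take the form ``$C(\Lambda)\eps^{2\delta}+C_0/\Lambda$'' rather than a single $\eps^{\delta}$ bound, and dictates the way the $\eps\to 0$ limit in~\eqref{e:32_bound} must be extracted by sending $\Lambda\to\infty$ after $\eps\to 0$.
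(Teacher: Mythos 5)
Your high-level strategy --- chaos decomposition via Wick's formula, reduction of each chaos component to a deterministic Fourier kernel, convolution estimates (Lemmas~\ref{le:estimatekeps}, \ref{le:convolution}, \ref{le:convolution_resonance}), and the observation that the counterterm cancels the divergent part of the first-chaos kernel --- matches the paper, and your treatment of the highest chaos components and of $\sigma_{\eps,m}$, $\nu_{\eps,\ell,m}$ (spending the $\eps$-prefactors through $\scal{\ell}_\eps\gtrsim\eps^{(1-\alpha)/2}\scal{\ell}^{(3-\alpha)/2}$) is essentially the paper's argument; no $\Lambda$ is needed there. The gap is in the first-chaos component, which is the only place where the $C(\Lambda)\eps^{2\delta}+C_0/\Lambda$ structure is genuinely required, and your proposed mechanism for it does not work. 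You cut the kernel sums at the \emph{fixed} frequency scale $\Lambda$ and claim that on $\{\max_j|\ell_j|>\Lambda\}$ the $\eps$-kernel and the limit kernel (or their difference) can be bounded by convolution/resonance estimates with a bound $C_0\Lambda^{-1}\scal{k}^{-2+2\kappa}$ \emph{uniform in} $\eps$. For the renormalised first-chaos kernel (for $m=1$ this is the quantity in \eqref{eq:D1}) this fails in the intermediate region $\Lambda<|\ell_1|\lesssim 1/\eps$: the only estimate available there that does not exploit the cancellation between the evaluations at $k$ and at $0$ is the brutal bound \eqref{eq:D1brutalest}, and then the sum is critical, since $\sum_{|\ell_1|>\Lambda}\scal{\ell_1}^{-4}\lesssim\Lambda^{-1}$ gets multiplied by $\sum_{\ell_2}\scal{\ell_2}_\eps^{-2}\lesssim\eps^{-1}$, giving $(\Lambda\eps)^{-1}$ rather than $\Lambda^{-1}$. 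So neither of your two regimes covers these frequencies: they are infinitely many (so no ``finitely many summands plus continuity in $\eps$''), and separate bounds on the two kernels are either log-divergent or, after using the difference of resolvents brutally, of order one in $\eps$ for fixed $\Lambda$.

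To use the cancellation in that region one needs $|\scal{\ell+k}_\eps^2-\scal{\ell}_\eps^2|=\eps^{-2}|\qQ(2\pi\eps|\ell+k|)-\qQ(2\pi\eps|\ell|)|\lesssim\scal{\ell}\scal{k}$, i.e.\ a Lipschitz bound on $\qQ'$ on $[0,M]$ with $M\sim\eps|\ell|$; since no growth of $\qQ'$ is assumed, the constant is uncontrolled in $M$. This is exactly why the paper splits at $\Lambda/\eps$ rather than at $\Lambda$: on $\aA_{m,k}^{(1,1)}=\{|\ell_j|\leq\Lambda/\eps\ \forall j\}$ the kernel difference is estimated via this local Lipschitz bound, producing the unquantified constant $C(\Lambda)$, while only on $\{|\ell_1|>\Lambda/\eps\}$ does the brutal bound finally gain, via $\sum_{|\ell_1|>\Lambda/\eps}\scal{\ell_1}^{-4}\lesssim\eps/\Lambda$ against $\sum_{\ell_2}\scal{\ell_2}_\eps^{-2}\lesssim\eps^{-1}$, which is the source of the $C_0/\Lambda$ term (and of the purely qualitative convergence for this object). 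Your proposal never engages with the frequencies of order $1/\eps$ and with the role of the local bound on $\qQ'$, so the central estimate \eqref{eq:32_shift} is not established by your argument; the $\Lambda$-threshold must be taken $\eps$-dependent, and the $C(\Lambda)$ in the statement comes from $\qQ'$ on $[0,\sim\Lambda]$, not from the number of summands below the cutoff.
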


The above proposition implies that $\tau_{\eps,1}$ converges to $\<32>$ in the desired space, while all other components of $\<3'2'>_\eps$ vanish. Since $a_{1}^{(\eps)} \rightarrow 1$ as $\eps \rightarrow 0$, this will imply the convergence of $\<3'2'>_\eps$ to $\<32>$. It turns out that the analysis for $\widehat{\tau_{\eps,m}}$ is the hardest, so we will focus on the bound \eqref{eq:32_Fourier_bound} for $\widehat{\tau_{\eps,m}}$ only. 

\bigskip

\noindent

Following~\cite[page 24]{Phi43_pedestrians} we see that the Fourier transform of $\tau_{\eps,m}$ is
\begin{equation} \label{eq:32_Fourier_expression}
\begin{split}
\widehat{\tau_{\eps,m}}(t,k) = &\eps^{2m-2} \Bigg[\sum \bigg[ \bigg( \int_{-\infty}^{t} e^{-(t-r) \scal{\sum_{j=1}^{2m+1} \ell_j}_\eps^2} \Big( \bdiamond_{j=1}^{2m+1} \widehat{\<1>_\eps}(r,\ell_j) \Big)\, {\rm d}r \bigg)\\
&\cdot \Big(\bdiamond_{j=1}^{2m} \widehat{\<1>_\eps}(t,\widetilde{\ell}_j) \Big) \bigg] - (2m+1) \; \E \big[ \widetilde{\iI_\eps}(\<1>_\eps^{\diamond (2m)}) \circ \<1>_\eps^{\diamond (2m)} \big] \cdot \widehat{\<1>_\eps}(t,k) \Bigg]\;,
\end{split}
\end{equation}
where the sum in the first line is taken over $(\ell_{1}, \dots, \ell_{2m+1}, \widetilde{\ell}_1, \dots, \widetilde{\ell}_{2m}) \in \pP(4m+1,k)$ with the further restriction that $\ell_{1} + \cdots + \ell_{2m+1} \sim \widetilde{\ell}_1 + \cdots \widetilde{\ell}_{2m}$\footnote{See Section~\ref{sec:Notations} for the precise meaning of this notation.}.

Note that it has homogeneous Wiener chaos components of orders $1, 3, \dots, 4m+1$. We will deal separately with the different chaos components of $\widehat{\tau_{\eps,m}}$, and we will denote the chaos component of order $i\in\{1,3,\ldots, 2m+1\}$ by $\widehat{\tau_{\eps,m}}^{(i)}$. 
\medskip

\noindent
\textbf{Analysis of $\widehat{\tau_{\eps,m}}^{(1)}$:}
\medskip

For this term, we will frequently consider the sum over $2m$ parameters $(\ell_1, \dots, \ell_{2m})$ in some domain in $(\Z^3)^{2m}$. The sum of these parameters will be frequently appearing, and hence for convenience, we always write
\begin{equation*}
\ell = \ell_1 + \cdots + \ell_{2m}\;.
\end{equation*}
Let $G_{\eps,m}$ be the integration kernel whose Fourier coefficients are given by
\begin{equation} \label{eq:kernel_Gm}
\widehat{G_{\eps,m}}(t-r,k) = \frac{(2m+1)!}{2^{2m}} \sum_{\ell+k \sim -\ell} \frac{e^{-(t-r) \left(\scal{\ell+k}_\eps^2 + \sum_{j=1}^{2m} \scal{\ell_j}_\eps^2 \right)}}{\prod_{j=1}^{2m} \scal{\ell_j}_\eps^2}\;.
\end{equation}
Here, the sum is taken over all $2m$-tuples $(\ell_1, \dots, \ell_{2m})$ such that $\ell +k \sim -\ell$. This notation is well defined according to Section~\ref{sec:Notations} since we always have $\ell+k + (-\ell) = k$. 

Now we have the expression
\begin{equation*}
\begin{split}
\E \big[ \widetilde{\iI_\eps}(\<1>_\eps^{\diamond (2m)}) \circ \<1>_\eps^{\diamond (2m)} \big] &= \frac{1}{2m+1} \int_{-\infty}^{t} \widehat{G_{\eps,m}}(t-r,0) {\rm d}r\\
&= \frac{(2m)!}{2^{2m}} \sum_{\ell_1, \dots, \ell_{2m} \in \Z^3} \frac{1}{\big( \prod_{j=1}^{2m} \scal{\ell_j}_\eps^2 \big) \cdot \big( \scal{\ell}_\eps^2 + \sum_{j=1}^{2m}\scal{\ell_j}_\eps^2 \big)}\;.
\end{split}
\end{equation*}
Comparing this with Proposition~\ref{pr:C2C3_expression}, we see that
\begin{equation}\label{eq:C2_formula}
C_{\eps}^{(2)} = \sum_{m=1}^{n-1} \frac{(a_m^{(\eps)})^{2}}{m^2 (2m+1)} \cdot \eps^{2m-2} \int_{-\infty}^{t} \widehat{G_{\eps,m}}(t-r,0) {\rm d}r\;,
\end{equation}
and hence we can express $\widehat{\tau_{\eps,m}}^{(1)}$ in terms of $\widehat{G_{\eps,m}}$ as
\begin{equation*}
\widehat{\tau_{\eps,m}}^{(1)}(t,k) = \eps^{2m-2} \Big[\int_{-\infty}^{t} \widehat{G_{\eps,m}}(t-r,k) \widehat{\<1>_\eps}(r,k) {\rm d}r - \int_{-\infty}^{t} \widehat{G_{\eps,m}}(t-r,0) {\rm d}r \cdot \widehat{\<1>_\eps}(t,k) \Big]\;.
\end{equation*}
The first chaos component of the limiting object $\widehat{\<32>}$ has the expression
\begin{equation*}
\widehat{\<32>}^{(1)}(t,k) = \int_{-\infty}^{t} \widehat{G}(t-r,k) \widehat{\<1>}(r,k) {\rm d}r - \int_{-\infty}^{t} \widehat{G}(t-r,0) {\rm d}r \cdot \widehat{\<1>}(t,k)\;,
\end{equation*}
where
\begin{equation*}
\widehat{G}(t-r,k) = \frac{3}{2} \sum_{\ell_1 + \ell_2 + k \sim -(\ell_1 + \ell_2)} \frac{e^{-(t-r) \big( \scal{\ell_1 + \ell_2 + k}^2 + \scal{\ell_1}^2 + \scal{\ell_2}^2 \big)}}{\scal{\ell_1}^2 \scal{\ell_2}^2}\;.
\end{equation*} 
This is formally $\widehat{G_{0,1}}$ (by setting $\eps=0$). In order to show the convergence of $\widehat{\tau_{\eps,m}}^{(1)}$ to $\widehat{\<32>}^{(1)}$, we rewrite it as
\begin{equation}\label{eq:tau_eps_m_1}
\begin{split}
\widehat{\tau_{\eps,m}}^{(1)}(t,k) &= \eps^{2m-2}  \int_{-\infty}^{t} \widehat{G_{\eps,m}}(t-r,k) \Big( \widehat{\<1>_\eps}(r,k) - \widehat{\<1>_\eps}(t,k) \Big) {\rm d}r\\
&+ \eps^{2m-2} \int_{-\infty}^{t} \Big( \widehat{G_{\eps,m}}(t-r,k) - \widehat{G_{\eps,m}}(t-r,0) \Big) {\rm d}r \cdot \widehat{\<1>_\eps}(t,k)\;.
\end{split}
\end{equation}
We control the second moment of the two terms separately. 
\medskip

\noindent
\textbf{(a) Analysis of the first term in~\eqref{eq:tau_eps_m_1}:}
\medskip

\noindent

We first show that for $m \geq 2$, its second moment vanishes as in \eqref{eq:32_Fourier_bound}. By positivity of $\widehat{G_{\eps,m}}$ and the triangle inequality, we have
\begin{equation*}
\begin{split}
&\eps^{4(m-1)} \E \Big| \int_{-\infty}^{t} \widehat{G_{\eps,m}}(t-r,k) \Big( \widehat{\<1>_\eps}(r,k) - \widehat{\<1>_\eps}(t,k) \Big) {\rm d}r \Big|^{2}\\
&\lesssim \eps^{4(m-1)} \bigg[ \int_{-\infty}^{t} \widehat{G_{\eps,m}}(t-r,k) \Big( \E |\widehat{\<1>_\eps}(r,k) - \widehat{\<1>_\eps}(t,k)|^{2} \Big)^{\frac{1}{2}} {\rm d}r \bigg]^{2}\;, 
\end{split}
\end{equation*}
Now, using \eqref{eq:ff_time}, we have
\begin{equation*}
\E |\widehat{\<1>}_\eps(r,k) - \widehat{\<1>}_\eps(t,k)|^{2} \lesssim (t-r)^{2\delta} \scal{k}_\eps^{-2+4\delta}\;. 
\end{equation*}
Substituting it back into the last expression, we obtain
\begin{equation} \label{eq:32_continuity}
\begin{split}
\eps^{4(m-1)} &\E \Big| \int_{-\infty}^{t} \widehat{G_{\eps,m}}(t-r,k) \Big( \widehat{\<1>_\eps}(r,k) - \widehat{\<1>_\eps}(t,k) \Big) {\rm d}r \Big|^{2}\\
&\lesssim \eps^{4(m-1)} \scal{k}_\eps^{-2+4\delta} \Big( \int_{-\infty}^{t} (t-r)^{\delta} \widehat{G_{\eps,m}}(t-r,k) {\rm d}r \Big)^{2}\;.
\end{split}
\end{equation}
Thus, it remains to bound the quantity
\begin{equation*}
\Big( \eps^{2(m-1)} \int_{-\infty}^{t} (t-r)^{\delta} \widehat{G_{\eps,m}}(t-r,k) {\rm d}r \Big)^{2}\;.
\end{equation*}
Note that by the expression of $\widehat{G_{\eps,m}}$, a change of variable yields
\begin{equation*}
\int_{-\infty}^{t} (t-r)^\delta \widehat{G_{\eps,m}}(t-r,k) {\rm d}r =_{m,\delta} \sum_{\stackrel{\ell+k}{\sim -\ell}} \bigg[ \Big( \prod_{j=1}^{2m} \frac{1}{\scal{\ell_j}_\eps^2} \Big) \cdot \frac{1}{\left( \scal{\ell+k}_\eps^2 + \sum_{j=1}^{2m} \scal{\ell_j}_\eps^2 \right)^{1+\delta}} \bigg]\;,
\end{equation*}
where $=_{m,\delta}$ means that both sides are equal modulo a multiplicative constant that depends on $m$ and $\delta$, but which is bounded in $\delta$ as $\delta\to 0$.
Now, for the denominator of the term inside the square bracket, we have by Lemma~\ref{le:estimatekeps} the bound
\begin{equation*}
\begin{split}
\Big(\prod_{j=1}^{2m}\scal{\ell_j}_\eps^{2} \Big) \cdot \Big( \scal{\ell+k}_\eps^2 + \sum_{j=1}^{2m}\scal{\ell_j}_\eps^2 \Big)^{1+\delta} 
&\gtrsim \prod_{j=1}^{2m} \scal{\ell_j}_\eps^{2+\frac{1+\delta}{m}} \gtrsim \prod_{j=1}^{2m} \Big( \eps^{\frac{1-\alpha}{2}} \scal{\ell_j}^{\frac{3-\alpha}{2}} \Big)^{2+\frac{1+\delta}{m}}\;
\end{split}
\end{equation*}
for any $\alpha \in (0,1)$. Hence, we have
\begin{equation*}
\begin{split}
\eps^{2(m-1)} \int_{-\infty}^{t} (t-r)^{\delta} \widehat{G_{\eps,m}}(t-r,k) {\rm d}r
&\lesssim \eps^{(2m+1+\delta) \alpha - (3+\delta)} \sum_{\ell} \prod_{j=1}^{2m} \frac{1}{\scal{\ell_j}^{(3-\alpha)(1+\frac{1+\delta}{2m})}}\;,
\end{split}
\end{equation*}
where we have relaxed the sum to all of $(\Z^3)^{2m}$. Now, we choose $\alpha = \frac{3+2\delta}{2m+1+\delta}$, which is smaller than $1$ for sufficiently small $\delta$ only when $m \geq 2$. This choice of $\alpha$ yields
\begin{equation*}
(2m+1+\delta)\alpha - (3+\delta) = \delta\;, \qquad (3-\alpha) \big( 1+\frac{1+\delta}{2m} \big) = \frac{6m+\delta}{2m}>3\;.
\end{equation*}
Hence, the above sum is finite, and the right hand side of \eqref{eq:32_continuity} has the bound of the form \eqref{eq:32_Fourier_bound} for $m \geq 2$ as long as we choose $\delta < \kappa$. 

For $m=1$, the convergence of the second moment of the difference
\begin{equation*}
\int_{-\infty}^{t} \widehat{G_{\eps,1}}(t-r,k) \Big( \widehat{\<1>_\eps}(r,k) - \widehat{\<1>_\eps}(t,k) \Big) {\rm d}r - \int_{-\infty}^{t} \widehat{G}(t-r,k) \Big( \widehat{\<1>}(r,k) - \widehat{\<1>}(t,k) \Big) {\rm d}r
\end{equation*}
can be obtained in a similar way by "borrowing" $\delta$ powers from the continuity of $\widehat{\<1>}$ in time. We omit the details. This completes the proof of the first term in \eqref{eq:tau_eps_m_1}.

\medskip

\noindent
\textbf{(b) Analysis of the second term in~\eqref{eq:tau_eps_m_1}:}
\medskip

\noindent
By Proposition~\ref{pr:ff_correlation} on the variance of the free field, it suffices to show that there exists a universal constant $C_0 > 0$ such that for every $\Lambda>0$, one has the bound
\begin{equation} \label{eq:32_shift}
\eps^{2m-2} \bigg| \int_{-\infty}^{t} \Big( \widehat{G_{\eps,m}}(t-r,k) - \widehat{G_{\eps,m}}(t-r,0) \Big) {\rm d}r \bigg| \leq \Big( C(\Lambda) \eps^{\delta} + \frac{C_0}{\Lambda} \Big) \scal{k}^{\delta}\;, \quad m \geq 2\;,
\end{equation}
and that the difference
\begin{equation*}
\bigg| \int_{-\infty}^{t} \Big( \widehat{G_{\eps,1}}(t-r,k) - \widehat{G_{\eps,1}}(t-r,0) \Big) {\rm d}r - \int_{-\infty}^{t} \Big( \widehat{G}(t-r,k) - \widehat{G}(t-r,0) \Big) {\rm d}r \bigg|
\end{equation*}
has the same upper bound. We give details for the bound \eqref{eq:32_shift} when $m \geq 2$, and the uniform in $\eps$ bound for $m=1$. The convergence to the limiting quantity (for $m=1$) can be shown in exactly the same way. 

We first compute the integral on the left hand side of \eqref{eq:32_shift}. By definition of $\widehat{G_{\eps,m}}$ in \eqref{eq:kernel_Gm}, we have
\begin{equation*}
\begin{split}
&\phantom{111}\frac{2^{2m}}{(2m+1)!} \Big( \widehat{G_{\eps,m}}(t-r,k) - \widehat{G_{\eps,m}}(t-r,0) \Big)\\
&= \sum_{k+\ell \sim -\ell} \bigg[ \Big( \prod_{j=1}^{2m} \frac{1}{\scal{\ell_j}_\eps^2} \Big) \Big( e^{-(t-r) \big( \scal{\ell+k}_\eps^2 + \sum_{j=1}^{2m} \scal{\ell_j}_\eps^2 \big)} - e^{-(t-r) \big( \scal{\ell}_\eps^2 + \sum_{j=1}^{2m} \scal{\ell_j}_\eps^2 \big)} \Big) \bigg]\\
&\phantom{11}- \sum_{k+\ell \nsim -\ell} \bigg[ \Big( \prod_{j=1}^{2m} \frac{1}{\scal{\ell_j}_\eps^2} \Big) \cdot e^{-(t-r) \big( \sum_{j=1}^{2m} \scal{\ell_j}_\eps^2 + \scal{\ell}_\eps^2  \big)} \bigg]\;,
\end{split}
\end{equation*}
where as before we used the abbreviation $\ell= \ell_1+\ldots \ell_{2m}$ with $(\ell_1, \dots, \ell_{2m}) \in (\Z^3)^{2m}$. 

Now, integrating out the $r$ variable, and using that by Remark~\ref{rem:resonance_restriction} we can enlarge the range of the sums to
\begin{equation*}
\aA_{m,k}^{(1)} = \left\{ (\ell_1, \dots \ell_{2m}): \scal{\ell} \geq \frac{\scal{k}}{10} \Big\} \right\}\;, \quad\text{and}\quad
\aA_{m,\eps}^{(2)} = \big\{ (\ell_1, \dots, \ell_{2m}): \scal{\ell} \leq 10 \scal{k} \big\}
\end{equation*}
respectively, we see the left hand side of \eqref{eq:32_shift} can be bounded (up to a constant multiple depending on $m$) by $\dD_{\eps,m}^{(1)} + \dD_{\eps,m}^{(2)}$, where
\begin{equation*}
\begin{split}
\dD_{\eps,m}^{(1)}(k) &= \eps^{2m-2} \sum_{\aA_{m,k}^{(1)}} \bigg[ \Big( \prod_{j=1}^{2m} \frac{1}{\scal{\ell_j}_\eps^2} \Big) \Big( \frac{1}{\scal{\ell+k}_\eps^2 + \sum_{j=1}^{2m} \scal{\ell_j}_\eps^2} - \frac{1}{\scal{\ell}_\eps^2 + \sum_{j=1}^{2m} \scal{\ell_j}_\eps^2} \Big) \bigg]\;,\\
\dD_{\eps,m}^{(2)}(k) &= \eps^{2m-2} \sum_{\aA_{m,k}^{(2)}} \Big( \prod_{j=1}^{2m} \frac{1}{\scal{\ell_j}_\eps^2} \Big) \cdot \frac{1}{\scal{\ell}_\eps^2 + \sum_{j=1}^{2m} \scal{\ell_j}_\eps^2}\;.
\end{split}
\end{equation*}
We treat $\dD_{\eps,m}^{(2)}$ first. By Lemma~\ref{le:estimatekeps}, we have
\begin{equation*}
\dD_{\eps,m}^{(2)}(k) \lesssim \eps^{2m-2} \sum_{\aA_{m,k}^{(2)}} \Big( \prod_{j=1}^{2m} \frac{1}{\scal{\ell_j}_\eps^{2+\frac{1}{m}}} \Big) \lesssim \eps^{(2m+1)\alpha-3} \sum_{\aA_{m,k}^{(2)}} \Big( \prod_{j=1}^{2m} \frac{1}{\scal{\ell_j}^{(3-\alpha)(1+\frac{1}{2m})}} \Big)
\end{equation*}
for $\alpha \in [0,1]$. For $m=1$, we have the uniform bound $\dD_{\eps,1}^{(2)}(k) \lesssim \scal{k}^{\delta}$ by choosing $\alpha = 1$. For $m \geq 2$, we choose $\alpha = \frac{3+\delta}{2m+1}$, which is smaller than $1$ if $\delta$ is sufficiently small. The exponent of $\scal{\ell_j}$ is then
\begin{equation*}
(3-\alpha)(1+\frac{1}{2m}) = 3 - \frac{\delta}{2m}\;,
\end{equation*}
which satisfies the assumption of Lemma~\ref{le:convolution}. Hence, we have
\begin{equation*}
\dD_{\eps,m}^{(2)}(k) \lesssim \eps^{\delta} \sum_{\ell: |\ell| \leq 10 |k|} \frac{1}{\scal{\ell}^{3-\delta}} \lesssim \eps^{\delta} \scal{k}^{\delta}\;, \qquad m \geq 2\;.
\end{equation*}
We now turn to $\dD_{\eps,m}^{(1)}$. We will first treat the case $m\geq 2$. A direct computation yields
\begin{equation}\label{eq:D1}
\dD_{\eps,m}^{(1)}(k) \lesssim \eps^{2m-2} \sum_{\aA_{m,k}^{(1)}} \bigg[ \Big( \prod_{j=1}^{2m} \frac{1}{\scal{\ell_j}_\eps^2} \Big) \cdot \frac{|\scal{\ell+k}_\eps^2 - \scal{\ell}_\eps^2|}{\big(\scal{\ell+k}_\eps^2+ \sum_{j=1}^{2m} \scal{\ell_j}_\eps^2 \big) \big(\scal{\ell}_\eps^2 + \sum_{j=1}^{2m} \scal{\ell_j}_\eps^2\big)} \bigg]\;.
\end{equation}
At this stage, we note that it is here that it turns out to be crucial that we do not use~\ref{eq:Q_derivative_growth} or any assumption that one might impose on the growth of the derivative of $\qQ$. Indeed, the difference $|\scal{\ell+k}_\eps^2 - \scal{\ell}_\eps^2|$ can then only be controlled by the maximum of these two if either $\scal{\ell}$ or $\scal{k}$ is larger than $\frac{1}{\eps}$. In this situation, the sum on the right hand side is critical with respect to $\eps$ and the dimension, and hence no smallness of $\eps$ can be obtained unless the sum for some $\ell_j$ starts far above $\frac{1}{\eps}$. This is the place where we can only obtain a qualitative convergence rather than a rate in terms of $\eps$. 

Fix $\Lambda >0$ as in the statement of the proposition. We decompose the domain of the sum into
\begin{equs}
\aA_{m,k}^{(1,1)} = \aA_{m,k}^{(1)} \cap \Big\{|\ell_j|\leq \frac{\Lambda}{\eps}\text{ for all }j\Big\},\quad \text{and} \quad \aA_{m,k}^{(1,2)}= \aA_{m,k}^{(1)} \setminus \aA_{m,k}^{(1,1)}.
\end{equs}
We denote the sum in~\eqref{eq:D1} over $\aA_{m,k}^{(1,i)}$ by $\dD_{\eps,m}^{(1,i)}$. We will first control $\dD_{\eps,m}^{(1,1)}$. We distinguish between the case $|k| \leq \frac{1}{\eps}$ and $|k| > \frac{1}{\eps}$. 
\begin{itemize}
	\item[(1)] We first treat the case $|k|\leq \frac{1}{\eps}$. By Assumption \eqref{eq:Q_origin}, the first derivative of $\qQ$ is locally Lipschitz continuous. Hence, for every $M>0$, there exists $C=C(M)$ such that $|\qQ'(z)| \leq C(M) \cdot |z|$ for all $z \in [0,M]$. Hence, if $|k| \leq \frac{1}{\eps}$ then on $\aA_{m,k}^{(1,1)}$, we have
	\begin{equation*}
	|\scal{\ell+k}_\eps^2-\scal{\ell}_\eps^2|
	= \frac{1}{\eps^2}\big|\qQ(2\pi \eps|k+\ell|)-\qQ(2\pi\eps |\ell|)\big| \lesssim_{\Lambda} \scal{\ell} \scal{k} \lesssim_{\Lambda} \scal{\ell}^{2-\delta} \scal{k}^{\delta}\;, 
	\end{equation*}
	where we used $\scal{\ell} \gtrsim \scal{k}$ in $\aA_{m,k}^{(1)}$ in the last inequality. Note that the proportionality constant depends on $\Lambda$, but the dependence cannot be quantified since we have no assumption on the growth of $\qQ'$. Hence, plugging it back into the right hand side of \eqref{eq:D1} and using Lemma~\ref{le:estimatekeps} so that $\sum_{j=1}^{2m} \scal{\ell_j}_\eps^{2+\delta} \gtrsim \prod_{j=1}^{2m} \scal{\ell_j}_\eps^{\frac{2+\delta}{2m}}$, we get
	\begin{equation*}
	\dD_{\eps,m}^{(1,1)}(k) \lesssim_{\Lambda} \eps^{2m-2} \scal{k}^{\delta} \prod_{j=1}^{2m} \Big( \sum_{|\ell_j| \leq \frac{\Lambda}{\eps}} \frac{1}{\scal{\ell_j}_\eps^{2+\frac{2+\delta}{2m}}} \Big)\;.
	\end{equation*}
	The sum in the parenthesis above (for each $j$) is uniformly bounded if $m=1$, and is bounded by $(\Lambda / \eps)^{1-\frac{2+\delta}{2m}}$ if $m \geq 2$. Hence, for $|k| \leq \frac{1}{\eps}$, we have
	\begin{equation*}
	\dD_{\eps,m}^{(1,1)}(k) \lesssim_\Lambda \begin{cases}
	\scal{k}^{\delta}\;, &\text{if } m = 1,\\
	\eps^{\delta} \scal{k}^{\delta}\;, &\text{if } m \geq 2,
	\end{cases}
	\end{equation*}
	
%
	
	\item[(2)] We now treat the case $|k|> \frac{1}{\eps}$. In that case we use the brutal estimate
	\begin{equation}\label{eq:D1brutalest}
	 \frac{|\scal{\ell+k}_\eps^2 - \scal{\ell}_\eps^2|}{\big(\scal{\ell+k}_\eps^2+ \sum_{j=1}^{2m} \scal{\ell_j}_\eps^2 \big) \big(\scal{\ell}_\eps^2 + \sum_{j=1}^{2m} \scal{\ell_j}_\eps^2\big)}\lesssim \frac{1}{\sum_{j=1}^{2m} \scal{\ell_j}_\eps^2}\;.
	\end{equation}
	Again, using Lemma~\ref{le:estimatekeps} to distribute the two powers to $\scal{\ell_j}$'s with $\frac{1}{m}$ each, we get
	\begin{equation*}
	\dD_{\eps,m}^{(1,1)}(k) \lesssim_{\Lambda} \eps^{2m-2} \prod_{j=1}^{2m} \Big( \sum_{|\ell_j| \leq \frac{\Lambda}{\eps}} \frac{1}{\scal{\ell_j}^{2+\frac{1}{m}}} \Big) \lesssim_{\Lambda} \begin{cases}
	|\log \eps|^{2} \lesssim \eps^{\delta} \scal{k}^{2\delta}\;, &\text{if } m = 1,\\
	1 \lesssim \eps^{\delta} \scal{k}^{\delta}\;, &\text{if } m \geq 2,
	\end{cases}
	\end{equation*}
	where we used $|k|>\frac{1}{\eps}$ in the last estimate. This concludes the bound for $\dD_{\eps,m}^{(1,1)}(k)$. 
\end{itemize}

We will now analyse the sum in~\eqref{eq:D1} over $\aA_{m,k}^{(1,2)}$. Note that up to a constant multiple of $2m$, we can instead consider the sum over
\begin{equation}
\Big\{(\ell_1,\ldots,\ell_{2m})\in (\Z^3)^{2m}:\, |\ell_1| > \frac{\Lambda}{\eps}\Big\}\;, 
\end{equation}
where now we also remove the restriction $\scal{\ell} \geq \frac{\scal{k}}{10}$. Using~\eqref{eq:D1brutalest} we see that
\begin{equation}
\frac{|\scal{\ell+k}_\eps^2 - \scal{\ell}_\eps^2|}{\big(\scal{\ell+k}_\eps^2+ \sum_{j=1}^{2m} \scal{\ell_j}_\eps^2 \big) \big(\scal{\ell}_\eps^2 + \sum_{j=1}^{2m} \scal{\ell_j}_\eps^2\big)}\lesssim \frac{1}{ \scal{\ell_1}_\eps^2}\;,
\end{equation}
Hence, we get
\begin{equs}
\dD_{\eps,m}^{(1,2)}(k)
\lesssim \eps^{2(m-1)} \Big(\sum_{|\ell_1|> \frac{\Lambda}{\eps}} \frac{1}{\scal{\ell_1}^4}\Big)
\Bigg(\prod_{j=2}^{2m}\Big(\sum_{\ell_j\in\Z^3}\frac{1}{\scal{\ell_j}_\eps^2}\Big)\Bigg)
\lesssim \frac{1}{\Lambda}\;, 
\end{equs}
where the proportionality constant is independent of $\eps$ and $\Lambda$, and the above bound is true for all $m \geq 1$. Here we have used the bounds
\begin{equation*}
\sum_{|\ell_1| > \frac{\Lambda}{\eps}} \frac{1}{\scal{\ell_1}^4} \lesssim \frac{\eps}{\Lambda}\;, \qquad \sum_{\ell_j \in \Z^3} \frac{1}{\scal{\ell_j}_\eps^2} \lesssim \sum_{|\ell_j| \leq \frac{1}{\eps}} \frac{1}{\scal{\ell_j}^2} + \sum_{|\ell_j| \geq \frac{1}{\eps}} \frac{1}{\eps^{1+\eta} \scal{\ell_j}^{3+\eta}} \lesssim \frac{1}{\eps}\;,
\end{equation*}
where the first inequality in the second term is a consequence of Assumption~\eqref{eq:Q_growth}.
This concludes the analysis of $\dD_{\eps,m}^{(1,2)}(k)$ and hence of $\dD_{\eps,m}^{(1)}(k)$ as well.

\medskip

\noindent
\textbf{Analysis of $\widehat{\tau_{\eps,m}}^{(4m+1)}$:}
\medskip

\noindent

We now give details on the bound for the highest chaos component of $\widehat{\tau_{\eps,m}}$. Note that
\begin{equation*}
\begin{split}
\widehat{\tau_{\eps,m}}^{(4m+1)}(t,k) = \eps^{2m-2} \sum_{\stackrel{\ell+\widetilde{\ell}=k}{\ell \sim \widetilde{\ell}}} \int_{-\infty}^{t} e^{-(t-r) \scal{\ell}_\eps^2} \Big( \bdiamond_{j=1}^{2m+1}\widehat{\<1>_\eps}(r,\ell_j) \Big) \diamond \Big( \bdiamond_{j=1}^{2m} \widehat{\<1>_\eps}(t,\widetilde{\ell}_j) \Big) {\rm d}r\;,
\end{split}
\end{equation*}
where this time we denote
\begin{equation*}
\ell = \sum_{j=1}^{2m+1} \ell_j \quad \text{and} \quad \widetilde{\ell} = \sum_{j=1}^{2m} \widetilde{\ell}_j\;,
\end{equation*}
and the sum is taken over the subset of $(\Z^3)^{4m+1}$ such that $\ell+\widetilde{\ell}=k$ and $\ell \sim \widetilde{\ell}$. 

The second moment of this quantity equals the sum of all possible contractions between different instances of $\widehat{\<1>_\eps}$, which yields a rather complicated expression. However, as observed in \cite[Section~10]{rs_theory} and in (\cite[Eq.(3.6)]{Phi43_pedestrians}), one can greatly simplify it by considering non-symmetric functions. In fact, one has the upper bound
\begin{equation}
\begin{split}
\E |\widehat{\tau_{\eps,m}}^{(4m+1)}&(t,k)|^2 \lesssim  
\eps^{4(m-1)} \sum_{\stackrel{\ell+\widetilde{\ell}=k}{\ell \sim \widetilde{\ell}}} \bigg[ \Big( \prod_{j=1}^{2m} \E |\widehat{\<1>_\eps}(t,\widetilde{\ell}_j)|^{2} \Big)\\
&\iint_{(-\infty,t)^{2}} \, e^{-(2t-r-r')\scal{\ell}_\eps^2} \Big( \prod_{j=1}^{2m+1} \E \big[ \widehat{\<1>_\eps}(r,\ell_j)\widehat{\<1>_\eps}(r',-\ell_j) \big] \Big) {\rm d}r' {\rm d}r \bigg]\;.
\end{split}
\end{equation}
Using Proposition~\ref{pr:ff_correlation} and successively integrating out $r$ and $r'$, we get the bound
\begin{equation} \label{eq:32_highest_chaos}
\E |\widehat{\tau_{\eps,m}}^{(4m+1)}(t,k)|^{2} \lesssim \eps^{4(m-1)} \sum_{\stackrel{\ell+\widetilde{\ell}=k}{\ell \sim \widetilde{\ell}}} \left[ \frac{1}{\scal{\ell}_\eps^2} \bigg( \prod_{j=1}^{2m+1} \frac{1}{\scal{\ell_j}_\eps^{2+\frac{2}{2m+1}}} \Big) \cdot \Big( \prod_{j=1}^{2m} \frac{1}{\scal{\widetilde{\ell}_j}_\eps^2} \bigg) \right]\;, 
\end{equation}
where we have used $\sum_{j=1}^{2m+1} \scal{\ell_j}_\eps^2 \gtrsim \prod_{j=1}^{2m+!} \scal{\ell_j}_\eps^{\frac{2}{2m+1}}$ to distribute the $\frac{2}{2m+1}$ powers to each $\scal{\ell_j}_\eps$. To control the sum on the right hand side above, we first note that by Lemma~\ref{le:estimatekeps}, we have
\begin{equation*}
\scal{\ell_j}_\eps \gtrsim \eps^{\frac{1-\alpha}{2}} \scal{\ell_j}^{\frac{3-\alpha}{2}}\;, \quad \scal{\widetilde{\ell_j}}_\eps \gtrsim \eps^{\frac{1-\widetilde{\alpha}}{2}} \scal{\widetilde{\ell}_j}^{\frac{3-\widetilde{\alpha}}{2}}\;, 
\end{equation*}
where $\alpha, \widetilde{\alpha} \in [0,1]$ will be specified later. Plugging it back into the right hand side of \eqref{eq:32_highest_chaos}, we get
\begin{equation*}
\E |\widehat{\tau_{\eps,m}}^{(4m+1)}(t,k)|^{2} \lesssim \eps^{(2m+2)\alpha + 2m \widetilde{\alpha} - 6} \sum_{\stackrel{\ell+\widetilde{\ell}=k}{\ell \sim \widetilde{\ell}}} \bigg[ \frac{1}{\scal{\ell}^2} \Big( \prod_{j=1}^{2m+1} \frac{1}{\scal{\ell_j}^{3 - \frac{(2m+2)\alpha-3}{2m+1}}} \Big) \cdot \Big( \prod_{j=1}^{2m} \frac{1}{\scal{\widetilde{\ell}_j}^{3-\widetilde{\alpha}}} \Big) \bigg]\;.
\end{equation*}
If $m \geq 2$ and $\delta>0$ is sufficiently small, we can choose $\alpha, \widetilde{\alpha} \in [0,1]$ such that all of the following hold:
\begin{equation} \label{eq:choice_alpha}
(2m+2)\alpha < 6\;, \quad 2m \widetilde{\alpha} < 3\;, \quad (2m+2)\alpha + 2m \widetilde{\alpha} - 6 = 2 \delta\;.
\end{equation}
The first two requirements guarantee that the exponents of $\scal{\ell_j}$ and $\scal{\widetilde{\ell}_j}$ satisfy the hypothesis of Lemma~\ref{le:convolution}. Hence, we can apply that lemma to sum up $\sum_{j} \ell_j = \ell$ and $\sum_{j} \widetilde{\ell} = \widetilde{\ell}$ first, and then use Lemma~\ref{le:convolution_resonance} to sum up $\ell$ and $\widetilde{\ell}$. This yields the bound
\begin{equation*}
\E |\widehat{\tau_{\eps,m}}^{(4m+1)}(t,k)|^{2} \lesssim \eps^{2\delta} \sum_{\stackrel{\ell+\widetilde{\ell}=k}{\ell \sim \widetilde{\ell}}} \left( \frac{1}{\scal{\ell}^{8-(2m+2)\alpha}} \cdot \frac{1}{\scal{\widetilde{\ell}}^{3-2m\widetilde{\alpha}}} \right) \lesssim \eps^{2\delta} \frac{1}{\scal{k}^{2-2\delta}}\;, \quad m \geq 2\;.
\end{equation*}
This is of the desired form. For $m=1$, the convergence can be shown in a similar way, and we omit the details.

\medskip

\noindent
\textbf{Analysis of $\widehat{\tau_{\eps,m}}^{(2j+1)}$ for $2 \leq j \leq 2m-1$:}
\medskip

\noindent
This case can be dealt with a mixture of the methods we used to analyse $\widehat{\tau_{\eps,m}}^{(1)}$ and $\widehat{\tau_{\eps,m}}^{(4m+1)}$. We omit the details.

\appendix

\section{Besov spaces, paraproducts and (perturbed) heat kernel estimates}
\label{sec:Besov}

We collect in this appendix some definitions and estimates on Besov spaces and paraproducts that are used throughout the article. Roughly speaking, these are normed spaces of functions/distributions characterised in terms of the behaviour of their Fourier transforms. They enjoy remarkable stability properties under paraproduct operations. 

A comprehensive account can be found in the book \cite{BookChemin}. Most of the statements below have also been cleanly stated in \cite{para_control_theory, phi43_CC, Phi43global}. Only minor modifications are made in statements concerning uniform (in $\eps$) regularisation properties of the perturbed heat kernel $(\d_t - \lL_\eps + 1)^{-1}$ and its difference with the heat kernel $(\d_t - \Delta + 1)^{-1}$.

\subsection{Besov spaces and Bony's paraproducts}

Let $\widetilde{\chi}, \chi$ be two $\cC_c^{\infty}(\R^d)$ functions taking values in $[0,1]$ such that
\begin{enumerate}
	\item $\supp (\widetilde{\chi}) \subset B(0,\frac{4}{3})$, and $\supp(\chi) \subset B(0,\frac{8}{3}) \setminus B(0,\frac{3}{4})$. 
	
	\item $\widetilde{\chi}(\xi) + \sum_{j=0}^{+\infty} \chi(\xi/2^j) = 1$ for all $\xi \in \R^d$. 
\end{enumerate}
We also define
\begin{equation*}
\chi_{-1} := \widetilde{\chi}\;, \quad\text{and}\quad \chi_j := \chi (\cdot / 2^j) \quad \text{for} \; j \geq 1\;. 
\end{equation*}
Let $\T^d = (\R / \Z)^{d}$ be the $d$-dimensional torus. For every function/distribution $f$ on $\T^d$, its Fourier transform $\widehat{f}: \Z^d \rightarrow \mathbf{C}$ is defined by
\begin{equation*}
\widehat{f}(k) := \int_{\T^d} f(x) e^{-2 \pi i k \cdot x} {\rm d}x\;.
\end{equation*}
For every integer $j \geq -1$ and $f$ on $\T^d$, we define the functions $\Delta_j f$ and $\sS_j f$ by
\begin{equation*}
\widehat{\Delta_{j} f} =  \chi_j \widehat{f}\; \quad \text{and} \quad \sS_{j} f := \sum_{i=-1}^{j-1} \Delta_i f\;. 
\end{equation*}
For every $\alpha \in \R$ and $f \in \cC^{\infty}(\T^d)$, we define the Besov norm $\|\cdot\|_{\bB^\alpha(\T^d)}$ of $f$ by
\begin{equation} \label{eq:norm_Besov}
\|f\|_{\bB^\alpha(\T^d)} = \sup_{j \geq -1} \big( 2^{\alpha j} \|\Delta_j f\|_{L^{\infty}(\T^d)} \big)\;. 
\end{equation}
The right hand side above is finite for every $f \in \cC^{\infty}(\T^d)$. 

\begin{defn} \label{de:Besov}
	For every $\alpha \in \R$, the Besov space $\bB^{\alpha} = \bB^\alpha (\T^d)$ is the completion of $\cC^{\infty}(\T^d)$ functions with respect to the norm $\|\cdot\|_{\bB^\alpha}$ given in \eqref{eq:norm_Besov}. 
\end{defn}

We write $\|\cdot\|_{\alpha} = \|\cdot\|_{\bB^\alpha}$ for simplicity. Note that for $\alpha \in \R^{+} \setminus \N$, the Besov norm $\|\cdot\|_{\alpha}$ is equivalent to the usual H\"older-$\alpha$ norm $\cC^\alpha$\protect\footnote{That is, the $L^\infty$-norm of the first $\lfl \alpha \rfl$ derivatives plus the H\"older-($\alpha - \lfl \alpha \rfl$) norm for the $\lfl \alpha \rfl$-th derivative.}. We refer to \cite[Page~99]{BookChemin} and \cite[Appendix A]{para_control_theory} for more discussions. 

Here, we define the Besov space to be the completion of smooth functions under the norm $\bB^\alpha$ rather than functions/distributions with a finite right hand side of \eqref{eq:norm_Besov}. This yields a slightly smaller space, but has the advantage that smooth approximations converge in the same space. In particular, this definition enables us to show that the approximated solutions $(v_\eps, w_\eps)$ converge in the same space where they are constructed with help of the fixed point map in Theorem~\ref{th:fixed_pt_convergence}. The difference between the two definitions are in complete analogy with the two versions of the H\"older space $\cC^\alpha$: completion of smooth functions with respect to the $\cC^\alpha$ metric, and functions that fluctuate locally at order $\alpha$.

\begin{lem} \label{le:embedding}
	For every $\alpha > 0$, we have the embeddings
	\begin{equation*}
	\|f\|_{0} \lesssim \|f\|_{L^{\infty}} \lesssim_\alpha \|f\|_{\alpha}. 
	\end{equation*}
\end{lem}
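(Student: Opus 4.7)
The plan is to prove the two embeddings separately, each of which follows from standard Littlewood--Paley considerations applied to the definition of the Besov norm in \eqref{eq:norm_Besov}.

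For the first inequality $\|f\|_{0} \lesssim \|f\|_{L^{\infty}}$, I would write $\Delta_j f = K_j * f$ where $K_j$ is the (periodic) convolution kernel whose Fourier coefficients are $\chi_j$. Since $\chi_j(\cdot) = \chi(\cdot / 2^j)$ for $j \geq 0$ (and $\chi_{-1} = \widetilde{\chi}$), the inverse Fourier transform on $\R^d$ satisfies $K_j(x) = 2^{jd} K_0(2^j x)$ with $K_0$ Schwartz, and after periodisation its $L^1(\T^d)$-norm is bounded by $\|K_0\|_{L^1(\R^d)}$, uniformly in $j$. Young's convolution inequality then gives $\|\Delta_j f\|_{L^\infty} \leq \|K_j\|_{L^1} \|f\|_{L^\infty} \lesssim \|f\|_{L^\infty}$, and taking the supremum over $j$ with weight $2^{0 \cdot j} = 1$ yields $\|f\|_{0} \lesssim \|f\|_{L^{\infty}}$.

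For the second inequality $\|f\|_{L^\infty} \lesssim_\alpha \|f\|_\alpha$ with $\alpha > 0$, I would use that for smooth $f$ on $\T^d$ one has the pointwise Littlewood--Paley decomposition $f = \sum_{j \geq -1} \Delta_j f$, since $\sum_j \chi_j \equiv 1$ on $\Z^d$. Taking $L^\infty$-norms and estimating
\begin{equation*}
\|f\|_{L^\infty} \leq \sum_{j \geq -1} \|\Delta_j f\|_{L^\infty} \leq \|f\|_{\alpha} \sum_{j \geq -1} 2^{-\alpha j},
\end{equation*}
one obtains the desired bound, where the geometric sum converges precisely because $\alpha > 0$ (producing the $\alpha$-dependence of the implicit constant). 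To extend this from smooth $f$ to a general element of $\bB^\alpha$, I invoke the definition of $\bB^\alpha$ as the completion of $\cC^\infty(\T^d)$ under $\|\cdot\|_\alpha$ (Definition~\ref{de:Besov}): an approximating sequence of smooth functions is Cauchy in $\bB^\alpha$, hence by the inequality just proved it is Cauchy in $L^\infty$, so the limit is identified with an $L^\infty$ function and the estimate passes to the limit.

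There is no serious obstacle; the only subtle points are the uniform-in-$j$ bound on $\|K_j\|_{L^1(\T^d)}$, handled by the scaling of a Schwartz kernel together with periodisation, and the density argument at the end, which is clean given our choice to define $\bB^\alpha$ as the closure of smooth functions.
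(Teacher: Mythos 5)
Your proof is correct, and it follows the same (standard) route the paper intends: the paper simply cites external references for the bound $\|f\|_{0} \lesssim \|f\|_{L^{\infty}}$ and remarks that the second inequality is a direct consequence of $\alpha>0$, which is exactly the geometric-sum over Littlewood--Paley blocks that you wrote out, together with the kernel/Young argument for the first bound. Your closing density argument is a sensible way to make the extension from smooth $f$ to general elements of $\bB^\alpha$ explicit, given Definition~\ref{de:Besov} of $\bB^\alpha$ as a completion.
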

\begin{proof}
	See \cite[Remarks~3.4, 3.5 and 3.6]{Phi42global} or \cite[Remark~A.3]{Phi43global} for the first bound. The second inequality is a direct consequence of the fact that $\alpha >0$.
\end{proof}


For $f, g \in \cC^{\infty}(\T^d)$, we define the paraproducts $\prec$, $\succ$ and the resonant product $\circ$ as
\begin{equation*}
f \prec g = \sum_{i < j-1} \Delta_i f \; \Delta_j g = \sum_{j} \sS_{j-1}f \; \Delta_j g\;, \qquad f \succ g = g \prec f\;, 
\end{equation*}
and
\begin{equation*}
f \circ g = \sum_{|i-j| \leq 1} \Delta_i f \; \Delta_j g\;. 
\end{equation*}
The usual pointwise product can then be decomposed into the sum
\begin{equation*}
f g = f \prec g + f \succ g + f \circ g\;, 
\end{equation*}
at least when $f$ and $g$ are sufficiently regular. The following proposition states that the two paraproducts are always well defined regardless of the regularity of $f$ and $g$, while the resonance product requires the sum of the two regularities to be positive.

\begin{prop} \label{pr:Bony_estimates}
	We have the following bounds: 
	\begin{enumerate}
		\item $\|f \prec g\|_{\beta} \lesssim \|f\|_{L^\infty} \|g\|_{\beta}$; 
		
		\item $\|f \succ g\|_{\alpha+\beta} \lesssim \|f\|_{\alpha} \|g\|_{\beta}$ if $\beta < 0$; 
		
		\item $\|f \circ g\|_{\alpha+\beta} \lesssim \|f\|_{\alpha} \|g\|_{\beta}$ if $\alpha+\beta>0$. 
	\end{enumerate}
    The proportionality constants depend on $\alpha$ and $\beta$, but are uniform over $f,g$ in the respective function classes. 
\end{prop}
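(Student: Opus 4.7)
The plan is to reduce each bound to a pointwise estimate on the Littlewood--Paley blocks $\Delta_i(f \star g)$ for $\star \in \{\prec,\succ,\circ\}$, exploiting the Fourier localisation of each summand and then summing geometrically. The key observation is that the definitions of $\prec$, $\succ$, $\circ$ are built exactly so that each term in the defining sum has a prescribed Fourier support, which then dictates which blocks $\Delta_i$ can receive a nontrivial contribution.

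For $(1)$, the summand $S_{j-1}f \cdot \Delta_j g$ has Fourier support in an annulus of size $\sim 2^j$ (ball of radius $\sim 2^{j-1}$ convolved with annulus of radius $\sim 2^j$). Hence $\Delta_i(f\prec g)$ only receives contributions from $j$ with $|i-j|\le N_0$ for a fixed $N_0$ depending on the cutoffs $\widetilde\chi,\chi$. Using $\|S_{j-1}f\|_{L^\infty}\le \|f\|_{L^\infty}$ together with $\|\Delta_j g\|_{L^\infty}\le 2^{-\beta j}\|g\|_\beta$ gives the desired bound $2^{-\beta i}\|f\|_{L^\infty}\|g\|_\beta$ after taking the supremum over $i$.

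For $(2)$, the Fourier support argument is identical, but now one must control $\|S_{j-1}g\|_{L^\infty}$. Writing $S_{j-1}g = \sum_{k\le j-2}\Delta_k g$ and using $\|\Delta_k g\|_{L^\infty}\le 2^{-\beta k}\|g\|_\beta$ yields $\|S_{j-1}g\|_{L^\infty}\lesssim \|g\|_\beta \sum_{k\le j-2}2^{-\beta k}$. Here is exactly where the assumption $\beta<0$ enters: the series is a geometric sum in $2^{-\beta k}$ with positive ratio greater than $1$, so it is dominated by its last term $\sim 2^{-\beta j}$. Combining with $\|\Delta_j f\|_{L^\infty}\le 2^{-\alpha j}\|f\|_\alpha$ and summing over $|i-j|\le N_0$ delivers the $2^{-(\alpha+\beta)i}$ decay.

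For $(3)$, the Fourier supports behave differently: each summand $\Delta_i f \cdot \Delta_j g$ with $|i-j|\le 1$ is supported in a \emph{ball} (not an annulus) of radius $\sim 2^{\max(i,j)}$. Thus $\Delta_k(f\circ g)$ collects contributions from all pairs with $\max(i,j)\ge k-N_0$ rather than just $\max(i,j)\sim k$. Estimating each summand by $2^{-\alpha i-\beta j}\|f\|_\alpha \|g\|_\beta$ and summing over $i\ge k-N_0$ produces a tail series of the form $\sum_{i\ge k-N_0} 2^{-(\alpha+\beta)i}$. The hypothesis $\alpha+\beta>0$ is precisely the condition needed for convergence of this tail, yielding the expected $2^{-(\alpha+\beta)k}$ decay. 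This is the only step where a sign condition is genuinely required, and is therefore the main (and only) subtle point of the proof; the other two bounds are essentially bookkeeping once the Fourier support structure is identified.
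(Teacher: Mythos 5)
Your argument is correct and is exactly the standard Littlewood--Paley proof of Bony's estimates: the paper itself offers no proof of this proposition, simply deferring to the cited literature (\cite{BookChemin, para_control_theory, phi43_CC}), where precisely this Fourier-support-plus-geometric-summation argument is carried out, including the observation that the resonant term is supported in a ball rather than an annulus, which is why $\alpha+\beta>0$ is needed there and only there. The only cosmetic point is that $\|S_{j-1}f\|_{L^\infty}\le\|f\|_{L^\infty}$ should read $\lesssim$ (the partial-sum operator is bounded on $L^\infty$ uniformly in $j$ via the $L^1$ norm of its kernel, with constant in general larger than $1$), which does not affect the conclusion.
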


\begin{prop} \label{pr:commutator_estimate}
	Suppose $\alpha, \beta, \gamma \in \R$ satisfy $\alpha \in (0,1)$, $\beta+\gamma<0$ and $\alpha+\beta+\gamma>0$. Then the commutator $\Com$ defined by
	\begin{equation*}
	\Com(f,g,h) := (f \prec g) \circ h - f (g \circ h)
	\end{equation*}
	satisfies
	\begin{equation*}
	\|\Com(f,g,h)\|_{\alpha+\beta+\gamma} \lesssim \|f\|_{\alpha} \|g\|_{\beta} \|h\|_{\gamma}\;, 
	\end{equation*}
	uniformly over $f,g,h \in \cC_{c}^{\infty}(\T^d)$. 
\end{prop}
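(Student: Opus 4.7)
The plan is to analyse $\Com(f,g,h)$ dyadic block by dyadic block, exploiting the H\"older regularity of $f$ to extract a cancellation inside the Bony paraproduct. The first step is to expand
\begin{equation*}
\Com(f,g,h)=\sum_{|i-k|\le 1}\bigl[\Delta_i(f\prec g)-f\,\Delta_i g\bigr]\Delta_k h,
\end{equation*}
and then further split the bracket into a ``localised'' piece $\Delta_i(f\prec g)-\sS_{i-1}f\cdot\Delta_i g$ and a ``tail'' piece $[\sS_{i-1}f-f]\,\Delta_i g$. The two behave differently and will be estimated separately.

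For the localised piece, I would use the convolution representation $\Delta_i F(x)=\int K_i(x-y)F(y)\,dy$ applied to $F=f\prec g$, observe that only indices $k'$ with $|k'-i|$ bounded contribute (by frequency localisation of $\sS_{k'-1}f\,\Delta_{k'}g$), and write $\sS_{k'-1}f(y)=\sS_{k'-1}f(x)+[\sS_{k'-1}f(y)-\sS_{k'-1}f(x)]$ inside the integral. The first term reproduces $\sS_{i-1}f\,\Delta_i g$ after telescoping; for the remainder one uses the H\"older bound $|\sS_{k'-1}f(y)-\sS_{k'-1}f(x)|\lesssim |y-x|^\alpha\|f\|_\alpha$ (valid because $\alpha\in(0,1)$) together with the kernel moment $\int|K_i(z)|\,|z|^\alpha\,dz\lesssim 2^{-\alpha i}$ and $\|\Delta_{k'}g\|_{L^\infty}\lesssim 2^{-\beta k'}\|g\|_\beta$, to obtain $\|\Delta_i(f\prec g)-\sS_{i-1}f\,\Delta_i g\|_{L^\infty}\lesssim 2^{-(\alpha+\beta)i}\|f\|_\alpha\|g\|_\beta$. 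Since this remainder is Fourier-supported at scale $2^i$, multiplying by $\Delta_k h$ with $|k-i|\le1$ and summing gives a contribution to $\Delta_n\Com$ only from $i\approx n$, of size $\lesssim 2^{-(\alpha+\beta+\gamma)n}\|f\|_\alpha\|g\|_\beta\|h\|_\gamma$. For the tail piece, I use $\sS_{i-1}f-f=-\sum_{\ell\ge i-1}\Delta_\ell f$, so that the pointwise bound on $\Delta_\ell f\,\Delta_i g\,\Delta_k h$ and the Fourier support in a ball of radius $\lesssim 2^\ell$ give, after restricting to $\ell\gtrsim n$,
\begin{equation*}
\|\Delta_n(\text{tail})\|_{L^\infty}\lesssim \sum_{\ell\gtrsim n}2^{-\alpha\ell}\sum_{i\le \ell+1}2^{-(\beta+\gamma)i}\,\|f\|_\alpha\|g\|_\beta\|h\|_\gamma.
\end{equation*}
The inner sum converges because $\beta+\gamma<0$ and is dominated by $2^{-(\beta+\gamma)\ell}$; the outer sum then converges because $\alpha+\beta+\gamma>0$ and is dominated by $2^{-(\alpha+\beta+\gamma)n}$, yielding the claimed Besov bound.

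The main technical obstacle is the bookkeeping of Fourier supports in the tail piece: unlike the localised piece, the triple products $\Delta_\ell f\,\Delta_i g\,\Delta_k h$ are not themselves confined to a single dyadic annulus, so one must treat separately the diagonal regime $\ell\approx i$ (where the product can contain arbitrarily low frequencies) and the off-diagonal $\ell\gg i$ (where the product is localised at $2^\ell$) before invoking $\Delta_n$. The three hypotheses appear in one-to-one correspondence with the three key steps: $\alpha\in(0,1)$ furnishes the first-order Taylor cancellation that produces the $2^{-\alpha i}$ gain from the kernel moment; $\beta+\gamma<0$ makes the geometric sum over $i$ convergent; and $\alpha+\beta+\gamma>0$ makes the geometric sum over $\ell$ convergent.
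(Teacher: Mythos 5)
This proposition is not proved in the paper at all: it is quoted from the paracontrolled literature (see \cite{para_control_theory} and \cite{BookChemin}), so your argument has to be measured against the standard proof, which it essentially reproduces. The splitting of $\Delta_i(f\prec g)-f\,\Delta_i g$ into the commutator part $\Delta_i(f\prec g)-\sS_{i-1}f\,\Delta_i g$ (estimated by $2^{-(\alpha+\beta)i}\|f\|_\alpha\|g\|_\beta$ through the kernel--moment/H\"older argument, which is where $\alpha\in(0,1)$ enters) and the tail part $(\sS_{i-1}f-f)\,\Delta_i g$ is exactly the classical route, and your treatment of the tail (spectral support of $\Delta_\ell f\,\Delta_i g\,\Delta_k h$ in a ball of radius $\lesssim 2^\ell$, inner sum over $i$ by $\beta+\gamma<0$, outer sum over $\ell\gtrsim n$ by $\alpha+\beta+\gamma>0$) is correct.

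One step is misstated, although the final bound survives. For the localised piece you claim that, since $\Delta_i(f\prec g)-\sS_{i-1}f\,\Delta_i g$ is spectrally supported at scale $2^i$, its product with $\Delta_k h$, $|k-i|\le 1$, contributes to $\Delta_n\Com$ only for $i\approx n$. That is not true: the product of two functions whose spectra lie in annuli of comparable size $2^i$ is supported in a \emph{ball} of radius $\lesssim 2^i$, not in an annulus, so $\Delta_n$ of this product can be nonzero for every $i\ge n-c$, exactly as in the ``diagonal regime'' you single out for the tail. The repair is immediate and parallel to your tail estimate: sum the bound $2^{-(\alpha+\beta)i}\,2^{-\gamma i}\|f\|_\alpha\|g\|_\beta\|h\|_\gamma$ over $i\ge n-c$, which converges to $\lesssim 2^{-(\alpha+\beta+\gamma)n}$ precisely because $\alpha+\beta+\gamma>0$. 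In particular your closing claim of a one-to-one correspondence between hypotheses and steps is not quite accurate: $\alpha+\beta+\gamma>0$ is needed for the localised piece as well, not only for the tail. A smaller point: ``the first term reproduces $\sS_{i-1}f\,\Delta_i g$ after telescoping'' leaves the error $\sum_{|k'-i|\le 1}(\sS_{k'-1}f-\sS_{i-1}f)\,\Delta_i\Delta_{k'}g$, made of finitely many blocks of $f$ at frequency $\approx 2^i$; it obeys the same bound $2^{-(\alpha+\beta)i}\|f\|_\alpha\|g\|_\beta$ and is therefore harmless, but it should be recorded rather than absorbed silently.
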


With the bounds in Propositions~\ref{pr:Bony_estimates} and~\ref{pr:commutator_estimate}, the paraproduct and commutator operations can be continuously extended to functions/distributions in the Besov spaces whose exponents satisfy the assumptions of these two propositions.

\subsection{Bounds for the (perturbed) heat semi-group}

We give bounds on the heat semi-group $e^{t\lL_\eps}$ that are used throughout Section~\ref{sec:PDE}. Note that the assumption~\ref{eq:Q_derivative_growth} allows an extra $\delta$ power of $\qQ$ to control its derivatives. This gives a small loss at $t=0$ in the regularisation estimates (see the statements below). We do not know whether this loss could indeed happen, or it is because our bounds are not sharp.

\begin{lem} \label{le:op_bd_general}
	Let $\qQ$ satisfies Assumption~\ref{as:Q} (in general dimension $d$, and all derivatives up to order $N$ satisfy the last item in that assumption). Then there exists $c=c(N)$ such that for every $\delta>0$, there exists $C=C(\delta,N)$ such that
	\begin{equation} \label{eq:op_bd_general}
	\sup_{|\zeta| \geq \frac{1}{20}} \big| \d_{\zeta}^{\ell} \big( e^{-r \qQ(\mu \zeta)} \big) \big| \leq C (1 + r^{-\frac{\delta}{2}}) e^{- c r \mu^2}
	\end{equation}
	for all $r>0$, $\mu>0$, and every multi-index $\ell \in \N^d$ with $|\ell| \leq N$. Here $|\ell|$ denotes the sum of individual components of $\ell$. 
\end{lem}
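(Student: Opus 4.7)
The plan is to combine the multivariate Faà di Bruno formula with assumption \eqref{eq:Q_derivative_growth} on derivatives of $\qQ$ at infinity and the smoothness of $\qQ$ near the origin.

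First, I would establish the baseline lower bound $\qQ(\mu\zeta) \geq c_0 \mu^2$ valid for all $\mu > 0$ and $|\zeta| \geq \tfrac{1}{20}$. For $\mu|\zeta| \leq 1$, the behaviour $\qQ(z) \asymp z^2$ near $0$ gives $\qQ(\mu\zeta) \gtrsim \mu^2|\zeta|^2 \geq \mu^2/400$. For $\mu|\zeta| \geq 1$, \eqref{eq:Q_growth} gives $\qQ(\mu\zeta) \geq c \mu^{3+\eta}|\zeta|^{3+\eta}$, which under $\mu \geq 1/|\zeta|$ still dominates $\mu^2$ (using $\mu^{1+\eta} \geq |\zeta|^{-(1+\eta)} \geq 20^{-(1+\eta)}$). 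Splitting $e^{-r\qQ(\mu\zeta)} = e^{-r\qQ/2}\cdot e^{-r\qQ/2}$ and using this lower bound on one half will then deliver the factor $e^{-c r \mu^2}$ in the final estimate.

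The second step applies the multivariate Faà di Bruno formula to write
\[
\d_\zeta^\ell e^{-r\qQ(\mu\zeta)} = e^{-r\qQ(\mu\zeta)} \sum_\pi C_\pi \prod_{B \in \pi} (-r)\,\d_\zeta^{\ell_B} \qQ(\mu\zeta),
\]
where $\pi$ ranges over set partitions of an index set of size $|\ell|$ into blocks $B$ with multi-indices $\ell_B$ summing to $\ell$. By radial symmetry and the chain rule, each $\d_\zeta^{\ell_B}\qQ(\mu\zeta)$ is a bounded linear combination of terms $\mu^k \qQ^{(k)}(\mu|\zeta|)\,|\zeta|^{k-|\ell_B|}$ with $1 \leq k \leq |\ell_B|$, where the coefficients are bounded polynomials in $\zeta/|\zeta|$. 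I would then bound the block factors in two regimes. For $\mu|\zeta| \leq 1$, smoothness of $\qQ$ on $[0,1]$ together with $\mu \leq 20$ (forced by $|\zeta| \geq \tfrac{1}{20}$) gives $|\d_\zeta^{\ell_B}\qQ(\mu\zeta)| \leq C$; combined with the elementary bound $(r\qQ)^K e^{-r\qQ/2} \lesssim 1$ (with $K$ the number of blocks), this absorbs the $r^K$ factor with no $r^{-\delta/2}$ loss. For $\mu|\zeta| \geq 1$, applying \eqref{eq:Q_derivative_growth} at some $\delta' > 0$ to be chosen yields
\[
\mu^k \bigl|\qQ^{(k)}(\mu|\zeta|)\bigr|\,|\zeta|^{k-|\ell_B|} \lesssim \qQ(\mu\zeta)^{1+\delta'}\cdot |\zeta|^{-|\ell_B|},
\]
so the $\mu$-powers cancel uniformly in $k$. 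Multiplied over the $K$ blocks and using $|\zeta|^{-|\ell|} \leq 20^N$, the total factor becomes $r^K \qQ(\mu\zeta)^{(1+\delta')K}$, which I combine with $e^{-r\qQ(\mu\zeta)}$ as $(r\qQ)^K e^{-r\qQ/2} \cdot \qQ^{\delta' K} e^{-r\qQ/2}$. The first piece is uniformly bounded via $x^a e^{-x/2} \lesssim_a 1$; writing $\qQ^{\delta' K} = r^{-\delta' K}(r\qQ)^{\delta' K}$ and applying the same inequality gives $\qQ^{\delta' K} e^{-r\qQ/2} \lesssim_{\delta'} r^{-\delta' K}$.

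Choosing $\delta' = \delta/(2N)$ ensures $r^{-\delta' K} \leq r^{-\delta/2}$ uniformly over $K \leq |\ell| \leq N$ when $r \leq 1$, and is trivially bounded by $1$ when $r \geq 1$. Combining with the remaining $e^{-r\qQ/2} \leq e^{-c r\mu^2/2}$ from Step~1 then yields the claim. The main obstacle is precisely this rescaling: each of the $K$ blocks contributes a separate $\qQ^{\delta'}$ loss which accumulates into $r^{-\delta' K}$, and this is exactly why the assumption \eqref{eq:Q_derivative_growth} must be available for every $\delta > 0$ rather than a fixed one. Once the rescaling is in place, the remaining argument is a routine chain-rule computation, and the constants only depend on $N$ and $\delta$.
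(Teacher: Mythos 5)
Your overall strategy (Fa\`a di Bruno, splitting into the regimes $\mu|\zeta|\le 1$ and $\mu|\zeta|\ge 1$, invoking \eqref{eq:Q_derivative_growth} with a small auxiliary $\delta'$ in the outer regime, and converting $e^{-cr\qQ(\mu\zeta)}$ into $e^{-cr\mu^2}$ via $\qQ(z)\gtrsim z^2$) is the same as the paper's. However, your treatment of the inner regime $\mu|\zeta|\le 1$ has a genuine gap. There you bound each block factor only by a constant, $|\d_\zeta^{\ell_B}(\qQ(\mu\zeta))|\le C$, so after Fa\`a di Bruno you are left with $r^K C^K e^{-r\qQ(\mu\zeta)}$; the elementary inequality $(r\qQ)^K e^{-r\qQ/2}\lesssim 1$ does not absorb the factor $r^K$, because trading $r^K$ for $(r\qQ)^K$ requires $\qQ(\mu\zeta)\gtrsim 1$, which fails precisely in this regime, where $\qQ(\mu\zeta)\asymp(\mu|\zeta|)^2$ can be arbitrarily small. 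Indeed the quantity your chain of bounds produces is not controlled by the right-hand side of \eqref{eq:op_bd_general}: take $|\zeta|=1$ and $\mu^2=1/r$ with $r\to\infty$; then $r^K e^{-r\qQ(\mu\zeta)}\sim r^K\to\infty$, while $C(1+r^{-\delta/2})e^{-cr\mu^2}$ stays bounded. The missing ingredient --- and what the paper's proof uses --- is that in the inner regime each block gains a full factor of $\qQ(\mu\zeta)$, not merely a constant: since $\qQ(0)=0$, $\tfrac12\qQ''(0)=1$, $\qQ\in C^5$ and \eqref{eq:Q_origin} holds, one has $z^k|\qQ^{(k)}(z)|\lesssim \qQ(z)$ for $z\in[0,1]$ (the case $k=1$ uses $|\qQ'(z)|\lesssim z$ together with $\qQ(z)\gtrsim z^2$), and combined with $\mu^{k}\le 20^{k}(\mu|\zeta|)^{k}$ for $|\zeta|\ge\tfrac1{20}$ this gives $r\,|\d_\zeta^{\ell_B}(\qQ(\mu\zeta))|\lesssim r\,\qQ(\mu\zeta)$ per block; only then does $(r\qQ)^K e^{-r\qQ/2}\lesssim 1$ close the inner regime with no $r^{-\delta/2}$ loss.

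The outer regime $\mu|\zeta|\ge1$ is handled correctly and matches the paper (per-block loss $\qQ^{\delta'}$, accumulated into $r^{-\delta' K}$, with $\delta'$ chosen of order $\delta/N$), apart from a small bookkeeping slip: you spend both halves $e^{-r\qQ/2}\cdot e^{-r\qQ/2}$ on absorbing $(r\qQ)^K$ and $\qQ^{\delta' K}$, and then appeal to a ``remaining'' $e^{-r\qQ/2}$ to produce $e^{-cr\mu^2}$. Split the exponential into three pieces, or write $r^K\qQ^{(1+\delta')K}=r^{-\delta' K}(r\qQ)^{(1+\delta')K}$ and absorb in one stroke, and this part is fine. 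With the inner-regime block bound corrected as above, your argument coincides with the paper's proof.
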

\begin{proof}
	By Fa\`a di Bruno's formula the quantity $\d_\zeta^{\ell} \big( e^{-r \qQ(\mu \zeta)} \big)$ is a linear combination of terms of the form
	\begin{equation*}
	\Big( \prod_{i=1}^{m} \left(r \mu^{|\ell_i|} (\d^{\ell_i} \qQ)(\mu \zeta) \right)^{n_i} \Big) \cdot e^{- r \qQ(\mu \zeta)}\;,
	\end{equation*}
	where $n_1, \dots, n_m \in \N$ and $\ell_1, \dots, \ell_m \in \N^d$ satisfy
	\begin{equation*}
	n_1 |\ell_1| + \cdots + n_m |\ell_m| = |\ell|\;.
	\end{equation*}
	If $|\mu\zeta|\leq 1$, the assumption on $\qQ$ near the origin and on the range of $\zeta$ imply
	\begin{equation*}
	\mu^{|\ell_i|} |(\d^{\ell_i} \qQ)(\mu \zeta)| \lesssim (\mu |\zeta|)^{\ell_i} |(\d^{\ell_i} \qQ)(\mu \zeta)|  \lesssim \qQ(\mu \zeta)\;, 
	\end{equation*}
	and hence
	\begin{equation*}
	\Big( \prod_{i=1}^{m} \left(r \mu^{|\ell_i|} (\d^{\ell_i} \qQ)(\mu \zeta) \right)^{n_i} \Big) \cdot e^{- r \qQ(\mu \zeta)} \lesssim \big( r \qQ(\mu \zeta) \big)^{} e^{-r \qQ(\mu \zeta)} \lesssim e^{-c r \qQ(\mu \zeta)}\;, 
	\end{equation*}
	where we set $n=\sum_{i=1}^{m} n_i$.
    If $|\mu\zeta| \geq 1$, then Assumption~\ref{eq:Q_derivative_growth} and the range of $\zeta$ considered imply
	\begin{equation*}
	\mu^{|\ell_i|} |(\d^{\ell_i} \qQ)(\mu \zeta)| \lesssim (\mu |\zeta|)^{\ell_i} |(\d^{\ell_i} \qQ)(\mu \zeta)| \lesssim |\qQ(\mu \zeta)|^{1+\delta}\;,
	\end{equation*}
	and hence
	\begin{equation*}
	\Big( \prod_{i=1}^{m} \left(r \mu^{|\ell_i|} (\d^{\ell_i} \qQ)(\mu \zeta) \right)^{n_i} \Big) \cdot e^{- r \qQ(\mu \zeta)} \lesssim r^{-n \delta} \big( r \qQ(\mu \zeta) \big)^{n(1+\delta)} e^{-r \qQ(\mu \zeta)} \lesssim r^{-n \delta} e^{-c r \qQ(\mu \zeta)}\;,
	\end{equation*}
	uniformly over $\mu \geq 20$ and $\zeta \geq \frac{1}{20}$. Since $\delta$ can be chosen arbitrarily (by adjusting the proportionality constant), we can replace $n \delta$ by $\frac{\delta}{2}$. The conclusion then follows by combining the two bounds and using $|\qQ(z)| \gtrsim |z|^2$ and $|\zeta| \geq \frac{1}{20}$. 
\end{proof}

\begin{lem} \label{le:heat_regularisation}
	For every $\gamma \geq \alpha$ and every $\delta \in (0,1)$, we have
	\begin{equation} \label{eq:heat_regularisation}
	\|e^{t(\lL_\eps-1)} f\|_{\gamma} \lesssim t^{-\frac{\gamma-\alpha}{2}} (1 + \eps^{\delta} t^{-\frac{\delta}{2}}) \|f\|_\alpha\;.
	\end{equation}
	Moreover, for every $\delta \in (0,1)$, we have
	\begin{equation} \label{eq:heat_regularisation_difference}
	\|e^{t(\lL_\eps-1)} f - e^{t(\Delta-1)} f\|_{\gamma} \lesssim \eps^{\delta} t^{-\frac{\gamma-\alpha+\delta}{2}} \|f\|_\alpha\;.
	\end{equation}
	Both proportionality constants depend on $\alpha$, $\gamma$ and $\delta$, but are uniform in $\eps \in [0,1]$, $t>0$ and $f \in \bB^\alpha$. 
\end{lem}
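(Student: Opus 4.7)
The plan is to prove both inequalities via a Littlewood--Paley decomposition combined with a kernel bound obtained by integration by parts, where the derivative control on the symbol comes from Lemma~\ref{le:op_bd_general}. For the first bound, I would write, for each dyadic block $j \geq -1$,
\begin{equation*}
\Delta_j e^{t(\lL_\eps-1)} f = K_{j,\eps,t} * \widetilde{\Delta}_j f, \qquad \widehat{K_{j,\eps,t}}(k) = \chi_j(k)\, e^{-t \scal{k}_\eps^2},
\end{equation*}
where $\widetilde{\Delta}_j$ is a mild enlargement of $\Delta_j$ whose Fourier multiplier equals $1$ on $\operatorname{supp}(\chi_j)$. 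By Young's inequality it suffices to control $\|K_{j,\eps,t}\|_{L^1(\T^3)}$, and by Poisson summation this is in turn bounded by the $L^1(\R^3)$-norm of the $\R^3$-inverse Fourier transform of the same symbol.

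For $j \geq 0$, I would rescale $k = 2^j \zeta$, so $\qQ(2\pi\eps|k|) = \qQ(2\pi\eps 2^j |\zeta|)$, and apply Lemma~\ref{le:op_bd_general} with $r = t/\eps^2$ and $\mu = 2\pi \eps 2^j$ (hence $r\mu^2 \asymp t \cdot 4^j$ and $r^{-\delta/2} = \eps^\delta t^{-\delta/2}$). On the shell $|\zeta| \geq 1/20$, which contains $\operatorname{supp}(\chi)$, this gives uniformly in multi-indices $\alpha$ of bounded order
\begin{equation*}
\bigl|\partial_\zeta^\alpha\bigl(\chi(\zeta)\, e^{-t\scal{2^j\zeta}_\eps^2}\bigr)\bigr| \lesssim (1+\eps^\delta t^{-\delta/2})\, e^{-c t(1+4^j)}.
\end{equation*}
Integrating by parts $N$ times, undoing the rescaling, then using Poisson summation yields
\begin{equation*}
\|K_{j,\eps,t}\|_{L^1(\T^3)} \lesssim (1+\eps^\delta t^{-\delta/2})\, e^{-c t(1+4^j)}.
\end{equation*}
The case $j = -1$ is immediate because the symbol is smooth and compactly supported. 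Combining with $\|\widetilde{\Delta}_j f\|_{L^\infty} \lesssim 2^{-\alpha j} \|f\|_\alpha$ and the elementary estimate $\sup_j 2^{(\gamma-\alpha)j} e^{-ct\cdot 4^j} \lesssim t^{-(\gamma-\alpha)/2}$ produces \eqref{eq:heat_regularisation}.

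For the difference estimate, the same scheme reduces matters to a kernel $H_{j,\eps,t}$ whose symbol is $\chi_j(k)\bigl(e^{-t\scal{k}_\eps^2}-e^{-t\scal{k}^2}\bigr)$. The key pointwise bound, again after $k=2^j\zeta$, is
\begin{equation*}
\bigl|\partial_\zeta^\alpha\bigl(e^{-t\scal{2^j\zeta}_\eps^2}-e^{-t\scal{2^j\zeta}^2}\bigr)\bigr| \lesssim \eps^\delta\, t^{-\delta/2}\, e^{-c t(1+4^j)},
\end{equation*}
which I would prove by splitting into two regimes on $\operatorname{supp}(\chi_j)$. When $\eps 2^j \leq 1$, Assumption~\eqref{eq:Q_origin} gives $|\scal{k}_\eps^2-\scal{k}^2| \lesssim \eps^2|k|^4 \lesssim \eps^2 4^{2j}$ and $\min(\scal{k}_\eps^2,\scal{k}^2) \gtrsim 1+4^j$; coupled with the elementary inequality $|e^{-a}-e^{-b}| \leq |a-b|^{\delta/2}\, e^{-(a\wedge b)}$ this yields the claim with the $t$-power coming from the factor $4^{j\delta} e^{-c t 4^j} \lesssim t^{-\delta/2} e^{-c't 4^j}$; derivatives are handled analogously using Lemma~\ref{le:op_bd_general} on each term and on the Taylor remainder $\qQ(z)-z^2$. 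When $\eps 2^j \geq 1$, the bare bound $|\cdot| \leq 2\, e^{-ct(1+4^j)}$ can be multiplied by the free factor $1 \leq (\eps 2^j)^\delta$ and then converted to $\eps^\delta t^{-\delta/2}$ at the cost of an exponential loss, which is absorbed by $e^{-ct'(1+4^j)}$. Integration by parts and Poisson summation as before then deliver $\|H_{j,\eps,t}\|_{L^1(\T^3)} \lesssim \eps^\delta t^{-\delta/2}\, e^{-c t'(1+4^j)}$, from which \eqref{eq:heat_regularisation_difference} follows by summing the Besov blocks exactly as in the first step.

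The main obstacle is the derivative estimate on the difference of symbols in the high-frequency regime $\eps 2^j > 1$: Taylor expansion is no longer available there, and one can only exploit the fact that $\eps \gtrsim 2^{-j}$ so that trivial bounds on the symbol may be upgraded to an $\eps^\delta$ gain by interpolation. This is precisely the place where the mild $\delta$-loss appears and where Assumption~\eqref{eq:Q_derivative_growth} (built into Lemma~\ref{le:op_bd_general}) is used to propagate the bound to derivatives; everything else is a routine Littlewood--Paley computation.
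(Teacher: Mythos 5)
Your proof is correct and follows essentially the same route as the paper: reduce each Littlewood--Paley block via Young's inequality to an $L^1$ bound on the kernel, obtain that bound from derivative estimates on the rescaled symbol (Lemma~\ref{le:op_bd_general} with $r=t/\eps^2$, $\mu=2\pi\eps 2^j$), and for the difference split according to whether $\eps 2^j$ is small (compare $\qQ$ with $|\cdot|^2$ via \eqref{eq:Q_origin}) or large (use the lemma together with the free factor $(\eps 2^j)^\delta\geq 1$). The only differences are cosmetic — Poisson summation plus an enlarged block on $f$ instead of the paper's enlarged cutoff $\rho$ and weighted-sup argument — and your level of detail in the two regimes matches the paper's.
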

\begin{proof}
	Since the operation of $e^{-t \id}$ is multiplication by $e^{-t}$, it suffices to prove the lemma with the operator $e^{t \lL_\eps}$. 
	
	Let $\rho$ be a smooth cutoff function taking value in $[0,1]$, with support in the annulus $B(0,\frac{10}{3}) \setminus B(0,\frac{1}{8})$, and equals $1$ on the support of $\chi$. Then we have
	\begin{equation*}
	\Delta_j \big( e^{t \lL_\eps} f\big) = e^{t \lL_\eps} \big( \Delta_j f \big) = \phi_{t,j}^{(\eps)} * \Delta_j f\;, 
	\end{equation*}
	where $\phi_{t,j}^{(\eps)}$ has Fourier transform
	\begin{equation*}
	\widehat{\phi_{t,j}^{(\eps)}}(\zeta) = e^{t \widehat{\lL_\eps}(\zeta)} \rho (\zeta / 2^j)\;.
	\end{equation*}
	By Young's inequality, we have
	\begin{equation*}
	\|\Delta_j \big( e^{t \lL_\eps} f\big)\|_{L^\infty} \leq \|\phi_{t,j}^{(\eps)}\|_{L^1} \|\Delta_j f\|_{L^\infty}\;.
	\end{equation*}
	It then remains to control $\|\phi_{t,j}^{(\eps)}\|_{L^1}$. Taking the inversion Fourier transform and performing a change of variable, we get
	\begin{equation*}
	\| \phi_{t,j}^{(\eps)} \|_{L^1} = \int_{\R^d} |g_{t,j}^{(\eps)}(x)| {\rm d}x\;,
	\end{equation*}
	where
	\begin{equation*}
	g_{t,j}^{(\eps)}(x) = \int_{\R^d} e^{t \widehat{\lL_\eps}(2^j \zeta)} \rho(\zeta) e^{2 \pi i \zeta \cdot x} {\rm d} \zeta\;.
	\end{equation*}
	Now, for $N \geq \lfl \frac{d}{2} \rfl + 1$, we have
	\begin{equation*}
	\|g_{t,j}^{(\eps)}\|_{L^1} \lesssim \sup_{x \in \R^d} \Big| (1+|x|^2)^{N} g_{t,j}(x) \Big| \lesssim \sup_{x \in \R^d} \Big|  \int_{\R^d} (1-\Delta_{\zeta})^{N} \big( e^{t \widehat{\lL_\eps}(2^j\zeta)} \rho(\zeta) \big) e^{2 \pi i \zeta \cdot x} {\rm d} \zeta \Big|
	\end{equation*}
	Since $\rho$ is supported on $|\zeta| \in [\frac{1}{8}, \frac{5}{3}]$, we obtain
	\begin{equation*}
	\|\phi_{t,j}^{(\eps)}\|_{L^1} \lesssim \sup_{\zeta: |\zeta| \in [\frac{1}{8}, \frac{10}{3}]} \; \sum_{\ell: |\ell| \leq 2N} \Big| \d_\zeta^{\ell} \big( e^{t \widehat{\lL_\eps}(2^j \zeta)} \big) \Big|\;.
	\end{equation*}
	Recall that $\widehat{\lL_\eps}(\zeta) = - \eps^{-2} \qQ(2 \pi \eps 2^j \zeta)$. Applying Lemma~\ref{le:op_bd_general} with $r = \frac{t}{\eps^2}$ and $\mu = 2 \pi \cdot 2^j \eps$ gives the bound \eqref{eq:heat_regularisation}. 
	
	As for the bound \eqref{eq:heat_regularisation_difference}, it suffices to establish control for the quantity
	\begin{equation*}
	\|\phi_{t,j}^{(\eps)} - \phi_{t,j}\|_{L^1} \lesssim \sup_{\zeta: |\zeta| \in [\frac{1}{8}, \frac{10}{3}]} \Big| \d_{\zeta}^{\ell} \big( e^{-\frac{t}{\eps^2} \qQ(2 \pi \cdot 2^j \eps \zeta)} - e^{-t |\zeta|^2} \big) \Big|
	\end{equation*}
	for multi-indices $\ell \in \N^d$ with $|\ell| \leq 2N = 2 ( \lfl \frac{d}{2} \rfl + 1 )$. If $2^j \eps \geq 20$, the argument in Lemma~\ref{le:op_bd_general} already gives the factor $\eps^\delta t^{-\frac{\delta}{2}}$. For $2^j \eps \leq 20$, this factor can be obtained by controlling the difference of the derivatives between $\qQ$ and $|\cdot|^2$. This completes the bound \eqref{eq:heat_regularisation_difference}. 
\end{proof}

\begin{lem} \label{le:heat_continuity}
	For every $\gamma \geq \alpha$ and $\theta \in [0,2]$, we have
	\begin{equation} \label{eq:heat_continuity}
	\|e^{t(\lL_\eps-1)} f - e^{s(\lL_\eps-1)} f\|_{\gamma} \lesssim (t-s)^{\frac{\theta}{2}} s^{-\frac{\gamma-\alpha+\theta}{2}} (1 + \eps^{\delta} s^{-\frac{\delta}{2}}) \|f\|_{\alpha}\;.
	\end{equation}
	Moreover, if $\delta \geq 0$ is sufficiently small such that $\frac{\theta}{2} + \frac{\delta}{2} \leq 1$, then we have
	\begin{equation} \label{eq:heat_continuity_difference}
	\big\| \big( e^{t(\lL_\eps-1)} - e^{s(\lL_\eps-1)} \big) f - \big( e^{t(\Delta-1)} - e^{s(\Delta-1)} \big) f \big\|_{\gamma} \lesssim \eps^{\delta} (t-s)^{\frac{\theta}{2}} s^{-\frac{\gamma-\alpha+\theta+\delta}{2}} \|f\|_{\alpha}\;.
	\end{equation}
	All the proportionality constants are independent of $\eps$ and of $t-s>0$. 
\end{lem}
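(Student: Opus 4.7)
The strategy is to factor the time-difference and reduce to the Fourier-multiplier framework of Lemmas~\ref{le:op_bd_general} and~\ref{le:heat_regularisation}. For \eqref{eq:heat_continuity}, write
\[
e^{t(\lL_\eps - 1)} - e^{s(\lL_\eps - 1)} = e^{s(\lL_\eps - 1)}\bigl(e^{(t-s)(\lL_\eps - 1)} - \id\bigr),
\]
and establish the auxiliary uniform-in-$\eps$ sub-estimate
\[
\bigl\|(e^{r(\lL_\eps - 1)} - \id) f\bigr\|_{\alpha - \theta} \lesssim r^{\theta/2} \|f\|_\alpha, \qquad \theta \in [0, 2].
\]
Combined with Lemma~\ref{le:heat_regularisation} applied to the $e^{s(\lL_\eps - 1)}$-factor (input regularity $\alpha - \theta$, output regularity $\gamma$) this yields \eqref{eq:heat_continuity}, with the $(1 + \eps^\delta s^{-\delta/2})$ factor originating entirely from Lemma~\ref{le:heat_regularisation}.

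The sub-estimate is proved by the same Littlewood--Paley computation as in Lemma~\ref{le:heat_regularisation}: on the $j$-th dyadic block, the $L^1$-norm of the convolution kernel associated to the symbol $\bigl(e^{-r}e^{r\widehat{\lL_\eps}(2^j\zeta)} - 1\bigr)\rho(\zeta)$ is controlled by $2N$ derivatives of this symbol on $|\zeta|\in[\tfrac{1}{8},\tfrac{10}{3}]$. Using $|1-e^{-x}| \leq x^{\theta/2}$ for $x \geq 0$ and $\theta \in [0,2]$ together with the representation
\[
e^{r\widehat{\lL_\eps}(2^j\zeta)} - 1 = \int_0^r \widehat{\lL_\eps}(2^j\zeta)\, e^{u\widehat{\lL_\eps}(2^j\zeta)}\, du,
\]
the symbol and all its $\zeta$-derivatives acquire a prefactor $r^{\theta/2}|\widehat{\lL_\eps}(2^j\zeta)|^{\theta/2}$ which, combined with the lower bound $|\widehat{\lL_\eps}(2^j\zeta)|\gtrsim 2^{2j}$ and the Fa\`a di Bruno argument of Lemma~\ref{le:op_bd_general}, produces $r^{\theta/2}2^{j\theta}$ modulo an exponential decay $e^{-cr2^{2j}}$ that absorbs any derivative-induced loss.

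For \eqref{eq:heat_continuity_difference}, use the algebraic split
\begin{align*}
&\bigl(e^{t(\lL_\eps - 1)} - e^{s(\lL_\eps - 1)}\bigr) - \bigl(e^{t(\Delta - 1)} - e^{s(\Delta - 1)}\bigr)\\
&\quad = \bigl(e^{s(\lL_\eps - 1)} - e^{s(\Delta - 1)}\bigr)\bigl(e^{(t-s)(\lL_\eps - 1)} - \id\bigr) + e^{s(\Delta - 1)}\bigl(e^{(t-s)(\lL_\eps - 1)} - e^{(t-s)(\Delta - 1)}\bigr).
\end{align*}
For the first summand, apply \eqref{eq:heat_regularisation_difference} on the first factor (input $\alpha-\theta$, output $\gamma$) and the sub-estimate on the second. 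For the second summand, apply the standard heat-kernel smoothing of $e^{s(\Delta - 1)}$ from $\bB^{\alpha-\theta-\delta}$ to $\bB^\gamma$ and use \eqref{eq:heat_regularisation_difference} with output regularity $\alpha - \theta - \delta < \alpha$, whose proof carries over verbatim; the constraint $\tfrac{\theta}{2} + \tfrac{\delta}{2} \leq 1$ is precisely what guarantees a non-negative power of $(t-s)$ in this last step.

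The main obstacle is ensuring that the derivative-induced loss of order $\eps^\delta r^{-\delta/2}$ in the Fa\`a di Bruno bound does not destroy the $r^{\theta/2}$ gain for small $\theta$ in the sub-estimate. Isolating the $(t-s)$-dependence via the factorization $e^{s(\lL_\eps-1)}(e^{(t-s)(\lL_\eps-1)} - \id)$ confines the derivative losses to the $s$-dependent semigroup factor, where they yield the stated $(1+\eps^\delta s^{-\delta/2})$, while the $(t-s)^{\theta/2}$ contribution comes from a pointwise scalar symbol bound that is polynomial in $|\widehat{\lL_\eps}|$ and therefore unaffected by differentiation.
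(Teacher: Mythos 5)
Your factorisation $e^{t(\lL_\eps-1)}-e^{s(\lL_\eps-1)}=e^{s(\lL_\eps-1)}\bigl(e^{(t-s)(\lL_\eps-1)}-\id\bigr)$ is also the starting point of the paper's argument, but the paper performs it inside a single Fourier symbol on each Littlewood--Paley block, whereas you split it into a composition of two separate operator bounds. The intermediate estimate this forces on you, namely $\|(e^{r(\lL_\eps-1)}-\id)f\|_{\alpha-\theta}\lesssim r^{\theta/2}\|f\|_\alpha$ uniformly in $\eps$, is false for $\theta>0$. The reason is Assumption \eqref{eq:Q_growth}: on frequencies $|k|\gg\eps^{-1}$ the dissipation rate $Q_k:=\eps^{-2}\qQ(2\pi\eps|k|)\gtrsim |k|^2(\eps|k|)^{1+\eta}$ is much larger than $|k|^2$, so $e^{r\lL_\eps}$ departs from the identity at times far smaller than $|k|^{-2}$. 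Concretely, test on a single mode $f=e^{2\pi i k\cdot x}$ with $M:=\eps|k|\to\infty$ and $r\asymp Q_k^{-1}$: the left-hand side is of order $\scal{k}^{\alpha-\theta}$, while the right-hand side is of order $r^{\theta/2}\scal{k}^{\alpha}\lesssim \scal{k}^{\alpha-\theta}M^{-(1+\eta)\theta/2}$, so no uniform constant exists. Your justification also contains the tell-tale slip: the prefactor you extract is $r^{\theta/2}|\widehat{\lL_\eps}(2^j\zeta)|^{\theta/2}$, and the lower bound $|\widehat{\lL_\eps}(2^j\zeta)|\gtrsim 2^{2j}$ is then used as if it were an upper bound; the factor $e^{-cr2^{2j}}$ cannot absorb the ratio $(|\widehat{\lL_\eps}|/2^{2j})^{\theta/2}$ because $r$ is not tied to $2^{-2j}$, and for the ``$-\id$'' part of the symbol no exponential decay is available at all. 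The same gap propagates to the first summand of your splitting for \eqref{eq:heat_continuity_difference}.

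The shape of the lemma already tells you the fix: the price for extracting $(t-s)^{\theta/2}$ must be paid in $s$, not in $t-s$, so the two exponentials cannot be decoupled into separate Besov operator norms. The paper keeps the product $e^{s\widehat{\lL_\eps}(2^j\zeta)}\bigl(1-e^{(t-s)\widehat{\lL_\eps}(2^j\zeta)}\bigr)$ in one symbol, bounds $|1-e^{-(t-s)Q}|\le \bigl((t-s)Q\bigr)^{\theta/2}$ with $Q=\eps^{-2}\qQ(2\pi\eps 2^j|\zeta|)$, and absorbs $Q^{\theta/2}$ into $e^{-sQ}$ at the cost of $s^{-\theta/2}$, after which the Fa\`a di Bruno argument of Lemma~\ref{le:op_bd_general} applies exactly as in Lemma~\ref{le:heat_regularisation}. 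For \eqref{eq:heat_continuity_difference} the paper then simply interpolates between \eqref{eq:heat_regularisation_difference} and \eqref{eq:heat_continuity} via $a\wedge b\le\sqrt{ab}$; this also avoids the extension of \eqref{eq:heat_regularisation_difference} to output regularity $\alpha-\theta-\delta<\alpha$ that the second summand of your splitting quietly requires and which you assert ``carries over verbatim'' without proof.
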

\begin{proof}
	We first explain how to obtain \eqref{eq:heat_continuity}. For simplicity, we again replace $\lL_\eps-1$ by $\lL_\eps$ without loss of generality. Similar to Lemma~\ref{le:heat_regularisation}, it suffices to control
	\begin{equation*}
	\begin{split}
	\|\phi_{t,j}^{(\eps)} - \phi_{s,j}^{(\eps)}\|_{L^1} &\lesssim \sup_{\zeta: |\zeta| \in [\frac{1}{8}, \frac{10}{3}]} \Big|  \d_\zeta^{\ell} \big( e^{t \widehat{\lL_\eps}(2^j \zeta)} - e^{s \widehat{\lL_\eps}(2^j \zeta)}  \big) \Big|\\
	&= \sup_{\zeta: |\zeta| \in [\frac{1}{8}, \frac{10}{3}]} \Big| \d_\zeta^{\ell} \Big( e^{s \widehat{\lL_\eps}(2^j \zeta)} \big( 1 - e^{(t-s) \widehat{\lL_\eps}(2^j \zeta)} \big)  \Big) \Big|
	\end{split}
	\end{equation*}
	A slight modification of the argument in Lemma~\ref{le:op_bd_general} allows to extract the factor $(t-s)^{\frac{\theta}{2}}$ from the second term inside the derivative and gives the bound \eqref{eq:heat_continuity}. 
	
	The bound \eqref{eq:heat_continuity_difference} can be obtained by combining \eqref{eq:heat_regularisation_difference} and \eqref{eq:heat_continuity} and using the inequality $a \wedge b \leq \sqrt{ab}$ for every $a, b \geq 0$. This completes the proof of the lemma. 
\end{proof}

\begin{lem} \label{le:heat_continuity_same_space}
	For every $\gamma \in \R$, every $\delta>0$, every $f \in \bB^{\gamma}$ and every $T>0$, we have
	\begin{equation*}
	\sup_{t \in [0,\eps^2]} \big( (\sqrt{t}/\eps)^{\delta} \big\| e^{t(\lL_\eps-1)} f - e^{t(\Delta-1)} f \big\|_{\gamma} \big) + \sup_{t \in [\eps^2, T]} \big\| e^{t(\lL_\eps-1)} f - e^{t(\Delta-1)} f \big\|_{\gamma} \rightarrow 0
	\end{equation*}
	as $\eps \rightarrow 0$. 
\end{lem}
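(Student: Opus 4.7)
My plan is to combine a density argument with a direct Fourier analysis on low-frequency components. Since $\bB^\gamma$ is the completion of $\cC^\infty(\T^d)$ under $\|\cdot\|_\gamma$, for any $\eta>0$ I can pick $f_\eta\in\cC^\infty(\T^d)$ with $\|f-f_\eta\|_\gamma<\eta$ and decompose
\begin{equation*}
\bigl(e^{t(\lL_\eps-1)}-e^{t(\Delta-1)}\bigr)f \;=\; \bigl(e^{t(\lL_\eps-1)}-e^{t(\Delta-1)}\bigr)(f-f_\eta) \;+\; \bigl(e^{t(\lL_\eps-1)}-e^{t(\Delta-1)}\bigr)f_\eta.
\end{equation*}
For the first term, Lemma~\ref{le:heat_regularisation} applied with $\alpha=\gamma$ gives $\|e^{t(\lL_\eps-1)}g\|_\gamma \lesssim (1+\eps^\delta t^{-\delta/2})\|g\|_\gamma$ (and analogously for $e^{t(\Delta-1)}$). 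For $t\in[\eps^2,T]$ this is $\lesssim\|g\|_\gamma$ since $\eps^\delta t^{-\delta/2}\leq 1$; for $t\in(0,\eps^2]$ the weight exactly cancels the singularity because $(\sqrt{t}/\eps)^\delta(1+(\eps/\sqrt{t})^\delta)\leq 2$. Hence this first piece is $\lesssim\|f-f_\eta\|_\gamma<\eta$ uniformly in $\eps$ and $t$ in the norm of the statement.

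It then remains to show $\bigl(e^{t(\lL_\eps-1)}-e^{t(\Delta-1)}\bigr)f_\eta\to 0$ in the weighted time norm for smooth $f_\eta$, which I will do via a Littlewood--Paley truncation $f_\eta=\sS_J f_\eta+(f_\eta-\sS_J f_\eta)$. Smoothness of $f_\eta$ yields $\|f_\eta-\sS_J f_\eta\|_\gamma\lesssim 2^{-J}\|f_\eta\|_{\gamma+1}$, and by the uniform operator bound just established this high-frequency tail contributes at most $C\cdot 2^{-J}\|f_\eta\|_{\gamma+1}$, which is made $\leq\eta$ by taking $J$ large depending only on $f_\eta$.

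For the band-limited remainder $\sS_J f_\eta$, only finitely many Littlewood--Paley blocks are involved. Using the representation $\Delta_j(e^{t\lL_\eps}g)=\phi_{t,j}^{(\eps)}*\Delta_j g$ from the proof of Lemma~\ref{le:heat_regularisation}, it suffices to show
\begin{equation*}
\max_{j\leq J+1}\|\phi_{t,j}^{(\eps)}-\phi_{t,j}\|_{L^1}\;\longrightarrow\; 0 \qquad\text{as }\eps\to 0,\quad\text{uniformly in }t\in[0,T].
\end{equation*}
For each fixed $j$ and sufficiently small $\eps$, $\eps 2^j\zeta$ is small on the compact support of $\rho$; Assumption~\eqref{eq:Q_origin} then gives $|\eps^{-2}\qQ(2\pi\eps 2^j\zeta)-4\pi^2 2^{2j}|\zeta|^2|\lesssim\eps^2 2^{4j}$ there, and combining this with $|e^{-a}-e^{-b}|\leq|a-b|$ yields $|\widehat{\phi_{t,j}^{(\eps)}}(\zeta)-\widehat{\phi_{t,j}}(\zeta)|\lesssim T\eps^2 2^{4j}$ uniformly in $t$ and $\zeta$. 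The Taylor argument extends to all derivatives in $\zeta$ using the smoothness of $\qQ$ near the origin (the factors of $\eps 2^j$ picked up by the chain rule are harmless for $j$ fixed as $\eps\to 0$), so passing to the $L^1$ norm of the kernel via the computation in Lemma~\ref{le:heat_regularisation} gives the required convergence of each block, hence also of the finite maximum. The weight $(\sqrt{t}/\eps)^\delta\leq 1$ on $[0,\eps^2]$ only helps. The expected main obstacle is precisely that Lemma~\ref{le:heat_regularisation_difference} cannot be used directly at $\alpha=\gamma$, since its $\eps^\delta t^{-\delta/2}$ factor is $O(1)$ at $t=\eps^2$ and fails to vanish; the direct Fourier-analytic treatment of finitely many low-frequency blocks is precisely what bypasses this obstruction.
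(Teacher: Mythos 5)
Your proof is correct and takes essentially the same route as the paper's: approximate $f$ by a spectrally band-limited function (the paper picks such a $g$ directly from Definition~\ref{de:Besov}, you reach it via a smooth approximant plus a Littlewood--Paley truncation), control the approximation error uniformly in $\eps$ and $t$ using Lemma~\ref{le:heat_regularisation} with $\alpha=\gamma$ together with the weight $(\sqrt{t}/\eps)^{\delta}$, and then show the band-limited piece converges uniformly in $t$ as $\eps\to 0$ by a direct low-frequency Fourier argument. Your explicit $L^1$ kernel estimates simply flesh out the step the paper asserts without detail, and your remark that Lemma~\ref{le:heat_regularisation_difference} is unusable at $\alpha=\gamma$ is exactly the reason the paper argues this way.
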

\begin{proof}
	By Definition~\ref{de:Besov}, for every $\theta>0$, there exists $g$ with compact spectral support with $\|g-f\|_{\gamma} < \theta$. By the triangle inequality, we have
	\begin{equation*}
	\big\| e^{t(\lL_\eps-1)} f - e^{t(\Delta-1)} f \big\|_{\gamma} \leq \|e^{t(\lL_\eps-1)} (f-g)\|_{\gamma} + \big\| e^{t(\lL_\eps-1)} g - e^{t(\Delta-1)} g \big\|_{\gamma} + \|g-f\|_{\gamma}\;.
	\end{equation*}
	The third term on the right hand side above is smaller than $\theta$ by assumption on $g$. By Lemma~\ref{le:heat_regularisation}, the first one is also smaller than $C\theta$ in both time regimes (with the corresponding weight for $t \in [0,\eps^2]$), and $C$ does not depend on $\eps$. Finally, since $g$ has compactly supported Fourier transform, the second term can be made arbitrarily small (uniformly in $t$) by sending $\eps \rightarrow 0$. 
\end{proof}

\begin{prop} \label{pr:heat_commutator}
	Let $\alpha \in (0,1)$, $\beta \in \R$, and $\gamma > \alpha + \beta$. Then for every $\delta \in (0,1)$, we have the bound
	\begin{equation} \label{eq:heat_commutator}
	\|e^{t(\lL_\eps -1)} \big( f \prec g \big) - f \prec \big( e^{t(\lL_\eps -1)}g \big) \|_{\gamma} \lesssim t^{-\frac{\gamma-\alpha-\beta}{2}} (1 + \eps^{\delta} t^{-\frac{\delta}{2}}) \|f\|_{\alpha} \|g\|_{\beta}\;, 
	\end{equation}
	uniformly over $\eps \in [0,1]$, $t > 0$ and $f \in \cC^\alpha (\T^d)$ and $g \in \cC^\beta(\T^d)$. The case $\eps=0$ corresponds to $e^{t (\Delta-1)}$. Furthermore, for every $\delta \in [0,1]$, we have the bound
	\begin{equation} \label{eq:heat_commutator_difference}
	\begin{split}
	&\phantom{111}\left\| e^{t(\lL_\eps -1)} \big( f \prec g \big) - f \prec \big( e^{t(\lL_\eps -1)}g \big) - \left( e^{t(\Delta -1)} \big( f \prec g \big) - f \prec \big( e^{t(\Delta -1)}g \big) \right) \right\|_{\gamma}\\
	&\lesssim \eps^{\frac{\delta^2}{\alpha+\delta}} t^{-\frac{\gamma-\alpha-\beta+\delta}{2}} \|f\|_{\alpha} \|g\|_{\beta}\;.
	\end{split}
	\end{equation}
\end{prop}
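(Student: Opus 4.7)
The plan is to follow the standard Littlewood--Paley commutator argument, using Lemma~\ref{le:op_bd_general} for kernel bounds on $e^{t(\lL_\eps-1)}$ and an interpolation step for the difference estimate \eqref{eq:heat_commutator_difference}.

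\textbf{Decomposition.} I would write
$$e^{t(\lL_\eps-1)}(f\prec g)-f\prec e^{t(\lL_\eps-1)}g = \sum_j R_j^{(\eps)},\qquad R_j^{(\eps)} := e^{t(\lL_\eps-1)}(S_{j-1}f\cdot\Delta_j g) - S_{j-1}f\cdot e^{t(\lL_\eps-1)}\Delta_j g.$$
Choosing $\widetilde\rho_j$ smooth, equal to $1$ on $\supp\chi_j$ and supported in a slightly larger annulus, let $k_{t,j}^{(\eps)}$ denote the inverse Fourier transform of $e^{-t(1+\eps^{-2}\qQ(2\pi\eps\,\cdot\,))}\widetilde\rho_j$. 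Because $S_{j-1}f\cdot\Delta_j g$ and $\Delta_j g$ are both frequency-localised near $2^j$, one has
$$R_j^{(\eps)}(x) = \int k_{t,j}^{(\eps)}(x-y)\bigl[S_{j-1}f(y)-S_{j-1}f(x)\bigr]\Delta_j g(y)\,dy.$$
The $\alpha$-H\"older bound $|S_{j-1}f(y)-S_{j-1}f(x)|\lesssim\|f\|_\alpha|y-x|^\alpha$ (which holds thanks to $\alpha\in(0,1)$) then gives $\|R_j^{(\eps)}\|_{L^\infty}\lesssim\|f\|_\alpha\|\Delta_j g\|_{L^\infty}\Psi_{j,\alpha}^{(\eps)}$, with $\Psi_{j,\alpha}^{(\eps)}:=\int|k_{t,j}^{(\eps)}(z)||z|^\alpha dz$.

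\textbf{Weighted kernel estimate and proof of \eqref{eq:heat_commutator}.} Via the rescaling $z=2^{-j}y$, $\Psi_{j,\alpha}^{(\eps)}$ reduces to $2^{-\alpha j}$ times a weighted $L^1$ norm of a rescaled kernel, which I would bound by $\sup_{|\zeta|\in[\frac18,\frac{10}3]}\sum_{|\ell|\leq 2N}|\partial_\zeta^\ell(e^{-t\eps^{-2}\qQ(2\pi 2^j\eps\zeta)}\rho(\zeta))|$ for $N$ slightly larger than in Lemma~\ref{le:heat_regularisation} (for $d=3$ all derivatives used lie within the five guaranteed by Assumption~\ref{as:Q}). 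Lemma~\ref{le:op_bd_general} with $r=t/\eps^2$, $\mu=2\pi 2^j\eps$ then yields
$$\Psi_{j,\alpha}^{(\eps)} \lesssim 2^{-\alpha j}(1+\eps^\delta t^{-\delta/2})e^{-ct 2^{2j}}.$$
Since $R_j^{(\eps)}$ is frequency-localised near $2^j$, $\Delta_{j'}R_j^{(\eps)}$ vanishes for $|j'-j|>C$; combined with $\|\Delta_j g\|_{L^\infty}\lesssim 2^{-\beta j}\|g\|_\beta$ and the elementary fact that $\sup_{j'}2^{(\gamma-\alpha-\beta)j'}e^{-ct 2^{2j'}}\lesssim t^{-(\gamma-\alpha-\beta)/2}$ (valid because $\gamma>\alpha+\beta$), this gives \eqref{eq:heat_commutator}.

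\textbf{Interpolation for \eqref{eq:heat_commutator_difference}.} The same representation applies with $k_{t,j}^{(\eps)}$ replaced by $\Delta k_{t,j}:=k_{t,j}^{(\eps)}-k_{t,j}^{(0)}$. I would derive two estimates: an unweighted one,
$\int|\Delta k_{t,j}(z)|\,dz\lesssim\eps^\delta 2^{\delta j}e^{-ct 2^{2j}}$,
along the lines of \eqref{eq:heat_regularisation_difference} (Taylor expansion of $\qQ$ near $0$ when $\eps 2^j\lesssim 1$, and trading part of the exponential decay for $\eps^\delta 2^{\delta j}$ when $\eps 2^j\gtrsim 1$); and a weighted one,
$\int|\Delta k_{t,j}(z)||z|^{\alpha+\delta}\,dz\lesssim 2^{-(\alpha+\delta)j}(1+\eps^\delta t^{-\delta/2})e^{-ct 2^{2j}}$,
obtained by applying the previous paragraph to each of $k_{t,j}^{(\eps)}$ and $k_{t,j}^{(0)}$ separately with weight $\alpha+\delta$. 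H\"older's inequality with conjugate exponents $\tfrac{\alpha+\delta}{\delta}$ and $\tfrac{\alpha+\delta}{\alpha}$ then interpolates
$$\int|\Delta k_{t,j}(z)||z|^\alpha dz \lesssim \eps^{\delta^2/(\alpha+\delta)}\,2^{-\alpha j + \frac{\delta^2}{\alpha+\delta}j}\,e^{-ct 2^{2j}},$$
after which the summation of the previous paragraph, together with $\delta^2/(\alpha+\delta)\leq\delta$ and $t\in(0,1]$, yields \eqref{eq:heat_commutator_difference}.

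\textbf{Main obstacle.} The delicate point is that the Lemma~\ref{le:op_bd_general}-type kernel bounds always carry the $\delta$-loss $t^{-\delta/2}$ inherited from Assumption~\ref{eq:Q_derivative_growth}, while the $\eps^\delta$ gain on the kernel difference comes from the Taylor expansion of $\qQ$ near the origin; the two cannot be combined linearly into a single bound with the optimal $\eps^\delta$ prefactor, which is precisely what forces the H\"older interpolation and the resulting nonlinear exponent $\delta^2/(\alpha+\delta)$ in \eqref{eq:heat_commutator_difference}. A secondary technical verification is that the Bony decomposition $f\prec g=\sum_j S_{j-1}f\cdot\Delta_j g$ produces summands whose Fourier support lies in an annulus comparable to $\supp\chi_j$, so that the single kernel $k_{t,j}^{(\eps)}$ can represent both halves of each $R_j^{(\eps)}$.
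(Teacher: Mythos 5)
Your argument is correct, and while your treatment of \eqref{eq:heat_commutator} is exactly the standard dyadic commutator estimate that the paper invokes (the paper cites \cite[Lemmas~2.5 \& A.1]{phi43_CC}, \cite[Proposition~A.16]{Phi43global} and only notes the extra derivative and the enlarged annulus, both of which you also address), your proof of \eqref{eq:heat_commutator_difference} follows a genuinely different route. The paper does not introduce any new kernel estimates there: it bounds the left-hand side in two soft ways — once by writing the commutator as a difference of two products and applying \eqref{eq:heat_regularisation_difference} together with the paraproduct estimate, giving $\eps^{\delta}t^{-\frac{\gamma-\beta+\delta}{2}}$, and once by applying \eqref{eq:heat_commutator} to each of the two semigroups, giving $t^{-\frac{\gamma-\alpha-\beta}{2}}$ — and then uses $a\wedge b\le a^{\theta}b^{1-\theta}$ with $\theta=\frac{\delta}{\alpha+\delta}$, which reproduces exactly the exponents in \eqref{eq:heat_commutator_difference} for all $t>0$. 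You instead interpolate at the level of the dyadic difference kernel $\Delta k_{t,j}$ via H\"older between an unweighted and a weighted $L^{1}$ bound; this is more work (new weighted kernel estimates) but yields per-block information and in fact a marginally better time singularity $t^{-\frac{1}{2}(\gamma-\alpha-\beta+\frac{\delta^2}{\alpha+\delta})}$ before you relax it using $t\le 1$. Two small caveats of your route, neither fatal: the final absorption of the factor $(1+\eps^{\delta}t^{-\delta/2})^{\alpha/(\alpha+\delta)}$ and of $2^{\frac{\delta^2}{\alpha+\delta}j}$ genuinely needs $t\lesssim 1$ (the paper's operator-level interpolation does not), and for $\alpha+\delta\ge 1$ your weighted estimate with weight $|z|^{\alpha+\delta}$ consumes the full budget of five derivatives of $\qQ$ permitted by Assumption~\ref{as:Q} (for small $\delta$, as used in the paper, this is not an issue).
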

\begin{proof}
	The first claim \eqref{eq:heat_commutator} follows from standard commutator estimate for the heat kernel (see for example \cite[Lemmas~2.5 \& A.1]{phi43_CC} and \cite[Proposition~A.16]{Phi43global}) and the bound in Lemma~\ref{le:op_bd_general}, except that we need to take one more derivative since we need to control the $L^1$-norm of $\widetilde{\phi}_{t,j}^{(\eps)}(x) = x \phi_{t,j}^{(\eps)}(x)$, and that the support of the function $\rho$ is a slightly larger annulus. 
	
	As for the second one, applying both \eqref{eq:heat_regularisation_difference} (with paraproduct estimate) and \eqref{eq:heat_commutator}, we see that the left hand side of \eqref{eq:heat_commutator_difference} is bounded by
	\begin{equation*}
	\big( \eps^{\delta} t^{-\frac{\gamma-\beta+\delta}{2}} \wedge t^{-\frac{\gamma-\alpha-\beta}{2}} \big) \|f\|_{\alpha} \|g\|_{\beta}\;. 
	\end{equation*}
	The conclusion then follows by optimising the above quantity. 
\end{proof}

\section{The standard dynamical $\Phi^4_3$ model}
\label{sec:standard_stochastic}

\subsection{Stochastic objects}
\label{app:stochastic}

In this short section we construct the diagrams alluded to at the beginning of Section~\ref{sec:main_convergence}. More precisely, we will give a meaning to the list of symbols
\begin{equation}\label{eq:standard_trees}
 \<1>, \; \<2>, \; \<30>, \; \<31>, \; \<22>, \; \<32>.
\end{equation}
These are the ones in Table~\ref{table:noise} (we did not include the constant $1$ here). The first symbol in this list refers to the solution of the stochastic heat equation, and has been defined in~\eqref{eq:free_field}.
To define the remaining symbols from it we need to invoke a limiting procedure. More precisely, let $\xi_\eps$ be a mollification of the white noise $\xi$, and denote by $\widetilde{\<1>_\eps}$ be the solution of the second equation in~\eqref{eq:free_field} with $\xi$ replaced by $\xi_\eps$. We then define
\begin{equation}  \label{eq:standard_trees_eps}
\begin{split}
\widetilde{\<2>}_\eps & := (\widetilde{\<1>}_\eps)^2 - c_\eps^{(1)}, \\
\widetilde{\<30>}_\eps & := \int_{-\infty}^{t} e^{-(t-r)(\Delta-1)} \big( \widetilde{\<1>}_\eps^3(r) - 3 c_\eps^{(1)} \widetilde{\<1>}_\eps \big) {\rm d}r, \\
\widetilde{\<31>}_\eps & := \widetilde{\<30>}_\eps \circ \widetilde{\<1>}_\eps, \\
\widetilde{\<22>}_\eps & := \iI (\widetilde{\<2>_\eps}) \circ \widetilde{\<2>_\eps}  - 2 c_\eps^{(2)}, \\
\widetilde{\<32>}_\eps & := \widetilde{\<30>}_\eps \circ \widetilde{\<2>}_\eps- 6 c_\eps^{(2)} \widetilde{\<1>}_\eps. 
\end{split}
\end{equation}
Here the constants $c_\eps^{(1)}$ and $c_\eps^{(2)}$ are defined via \begin{equation}
\label{e.def.ccn}
c_\eps^{(1)} := \E[ (\widetilde{\<1>}_\eps)^2 ] \quad \text{ and } \quad  c_\eps^{(2)} := \E[ \iI ( \widetilde{\<2>}_\eps) \circ \widetilde{\<2>}_\eps] .
\end{equation}
It then follows for instance from~\cite[Theorem 4.3]{phi43_CC}, and~\cite[Theorem 1.1]{Phi43_pedestrians} that the processes defined in~\eqref{eq:standard_trees_eps} converge in $L^p$ in the space $\cC^{|\tau|}$, and the limits of these processes are precisely those symbols listed in~\eqref{eq:standard_trees}. Here, $|\tau|$ refers to the degree of regularity defined in Table~\ref{table:noise}. 

The Fourier transform of the objects in \eqref{eq:standard_trees} can be explicitly written down. The expressions are the ones for $\tau_\eps$ (in the first line of Table~\ref{table:noise}) by formally setting $\eps=0$. 

Finally, we also let
\begin{equation*}
\<20>(t) := \int_{-\infty}^{t} e^{(t-r)(\Delta-1)} \<2>(r) {\rm d}r\;.
\end{equation*}
This is not part of $\Ups \in \xX$, but $\<20>(0)$ appears once in the PDE system.

\subsection{Solution theory}
\label{app:pde}

In this section we formulate the equivalent result to Theorem~\ref{th:fixed_pt_convergence} in the context of the standard $\Phi_3^4$-equation. 
To that end, very much as in Section~\ref{sec:fixed_point} we introduce the following objects.
\begin{equation} \label{eq:G_0_standard}
\begin{split}
G_0(\lambda,\Ups,u) := &\sum_{j=0}^{3} F_{j}(\lambda,\Upsilon) u^j - 3 \lambda (u- \lambda \<30>) \succ \<2>\\
&+ 9 \lambda^2 \Big[ \Com(u-\lambda\<30>; \iI(\<2>); \<2>) + \<2> \circ [\iI, \prec](u-\lambda \<30>, \<2>)\\
&- \big( \<2> \circ e^{t(\Delta-1)} \<20>(0) \big) \cdot (u-\lambda \<30>) \Big]\;, 
\end{split}
\end{equation}
where the functions $F_j$ are defined in~\eqref{eq:F_j_coefficients}. The following result is a consequence of~\cite[Theorem 2.1]{Phi43global}.

\begin{thm}
	\label{th:fixed_pt_standard}
	Recall the definition of the space $\yY_T$ in  \eqref{eq:space_solution_remainder}, and the definition of \eqref{eq:Ups_standard}. Consider the fixed point problem
	\begin{equation} \label{eq:fixed_pt_standard}
	\begin{split}
	&v(t) = e^{t (\Delta-1)} v(0) - 3 \lambda \int_{0}^{t} e^{(t-s)(\Delta-1)} \Big[ \big( v(r) + w(r) - \lambda \<30>(r) \big) \prec \<2>(r) \Big] {\rm d}r\;,\\
	&w(t) = e^{t(\Delta-1)} w(0) - 3 \lambda \int_{0}^{t} e^{(t-r)(\Delta-1)} \Big[ \big( e^{r(\Delta-1)}v(0) + w(r) \big) \circ \<2>(r) \Big] {\rm d}r\\
	&\phantom{11111}+ \int_{0}^{t} e^{(t-r)(\Delta-1)} G_0 \big(\lambda, \Ups(r), v(r) + w(r) \big) {\rm d}r\;,
	\end{split}
	\end{equation}
	Then for every $\lambda \in \R$, and $\big( v(0), w(0) \big) \in \bB^\kappa \times \bB^\kappa$, the fixed point problem \eqref{eq:fixed_pt_standard} has a unique solution $(v, w) \in \yY_{T}$ for any $T>0$. Moreover, the solution to~\eqref{eq:phi43}, which is defined as the limit of $\phi_\eps$ defined in Theorem~\ref{thm:standard}, can be written as $\phi= \<1> - \lambda \<30>+ v+w$, where the initidal data satisfies $\phi(0,\cdot) - \<1>(0,\cdot) \in \bB^\kappa$. 
\end{thm}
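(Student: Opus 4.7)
The plan is to obtain this result as the $\eps = 0$ specialisation of Theorem~\ref{th:fixed_pt_convergence}, supplemented by the global a priori bounds of \cite{Phi43global}. Local well-posedness in $\yY_T$ for some small $T > 0$ follows by reading off Part~1 of the proof of Theorem~\ref{th:fixed_pt_convergence}: setting $\eps = 0$ deletes the Taylor remainder term $\eps^{-3/2} V'(\sqrt{\eps}\psi_\eps; \sqrt{\eps}\cdot)$ from \eqref{eq:G_eps}, replaces $\lL_\eps$ by $\Delta$ and $\iI_\eps$ by $\iI$, and uses $h = e^{t(\Delta-1)}\<20>(0)$. The remaining terms are exactly those estimated in Step~1 (self-mapping of a ball of radius $\rR$) and Step~2 (contraction), using only Bony's estimates (Proposition~\ref{pr:Bony_estimates}), the commutator estimate (Proposition~\ref{pr:commutator_estimate}), Lemma~\ref{le:fixed_pt_commutator_heat} at $\eps = 0$, and the heat regularisation of Lemma~\ref{le:heat_regularisation}. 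Since $\qQ(\zeta) = |\zeta|^2$ in the limit, the $\delta$-loss in Lemma~\ref{le:heat_regularisation} disappears and one may in fact choose $\delta_0 = 0$ in the definition of $\yY_T^{(1)}, \yY_T^{(2)}$.

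To extend the local solution to an arbitrary deterministic horizon $T > 0$ as claimed, I would invoke the a priori bounds from \cite[Theorem~2.1]{Phi43global}. There, one exploits positivity of $\lambda$ together with the coercivity of the cubic interaction to obtain uniform-in-time bounds on $\|v(t)\|_\kappa + \|w(t)\|_\kappa$ that depend only on $\|\Ups\|_{\xX_T}$, on $T$, and on $\|v(0)\|_\kappa + \|w(0)\|_\kappa$. Standard patching of local solutions then yields a unique $(v, w) \in \yY_T$ for every $T > 0$. This global step is the main obstacle; locally, nothing beyond the arguments already written down for Theorem~\ref{th:fixed_pt_convergence} is required.

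Finally, for the identification $\phi = \<1> - \lambda \<30> + v + w$, I would redo the derivation of Section~\ref{sec:derivation_pde} directly for the limiting equation \eqref{eq:phi43}. Namely, if $\phi$ is the renormalised $\Phi^4_3(\lambda)$ solution provided by Theorem~\ref{thm:standard}, then $u := \phi - \<1> + \lambda \<30>$ satisfies the $\eps = 0$ analogue of \eqref{eq:u_eps} (with the Taylor remainder absent), and the same paracontrolled decomposition $u = v + w$ employed in Section~\ref{sec:derivation_pde} leads precisely to \eqref{eq:fixed_pt_standard}. Uniqueness of $(v, w)$ within the fixed point class guarantees that this is the same pair produced above, and the hypothesis $\phi(0,\cdot) - \<1>(0,\cdot) \in \bB^\kappa$ translates via $v(0) + w(0) = \phi(0,\cdot) - \<1>(0,\cdot) + \lambda\<30>(0,\cdot)$ and the $\bB^\kappa$-regularity of $\<30>(0,\cdot)$ into $v(0) + w(0) \in \bB^\kappa$, so that a free allocation of $v(0)$ and $w(0)$ places the pair in the space required by the fixed point argument.
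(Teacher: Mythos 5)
Your proposal is correct and follows essentially the same route as the paper, whose entire proof of Theorem~\ref{th:fixed_pt_standard} consists of the citation of \cite[Theorem~2.1]{Phi43global}: the $\eps=0$ case of Theorem~\ref{th:fixed_pt_convergence} for the local theory, the a priori bounds of \cite{Phi43global} for globality, and the derivation of Section~\ref{sec:derivation_pde} for the identification $\phi=\<1>-\lambda\<30>+v+w$ (with the free allocation of $v(0),w(0)$) are exactly what that citation encapsulates. The only caveat is that your global-in-time step genuinely requires $\lambda>0$, so you prove the "any $T>0$" claim only under that sign condition; this matches Remark~\ref{rmk:notion_convergence} and the hypotheses of \cite{Phi43global}, and the theorem's literal wording ``for every $\lambda\in\R$\,\dots\,for any $T>0$'' is an imprecision of the statement rather than a gap in your argument.
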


\endappendix

\bibliographystyle{Martin}
\bibliography{Refs}

\begin{thebibliography}{BCCH17}
\expandafter\ifx\csname url\endcsname\relax
  \def\url#1{\texttt{#1}}\fi
\expandafter\ifx\csname urlprefix\endcsname\relax\def\urlprefix{URL }\fi
\expandafter\ifx\csname href\endcsname\relax
  \def\href#1#2{#2}\fi
\expandafter\ifx\csname burlalt\endcsname\relax
  \def\burlalt#1#2{\href{#2}{\texttt{#1}}}\fi

\bibitem[BCCH17]{rs_equation}
\textsc{Y.~Bruned}, \textsc{A.~Chandra}, \textsc{I.~Chevyrev}, and
  \textsc{M.~Hairer}.
\newblock Renormalising {SPDE}s in regularity structures.
\newblock \emph{ArXiv e-prints} (2017).
\newblock \burlalt{arXiv:1711.10239}{http://arxiv.org/abs/1711.10239}.

\bibitem[BCD11]{BookChemin}
\textsc{H.~Bahouri}, \textsc{J.-Y. Chemin}, and \textsc{R.~Danchin}.
\newblock \emph{Fourier analysis and nonlinear partial differential equations},
  vol. 343 of \emph{Grundlehren der Mathematischen Wissenschaften}.
\newblock Springer, Heidelberg, 2011.

\bibitem[BHZ19]{rs_algebraic}
\textsc{Y.~Bruned}, \textsc{M.~Hairer}, and \textsc{L.~Zambotti}.
\newblock Algebraic renormalisation of regularity structures.
\newblock \emph{Invent. Math.} \textbf{215}, no.~3, (2019), 1039--1156.
\newblock \burlalt{arXiv:1610.08468}{http://arxiv.org/abs/1610.08468}.

\bibitem[BPRS93]{BPRS93}
\textsc{L.~Bertini}, \textsc{E.~Presutti}, \textsc{B.~R{\"u}diger}, and
  \textsc{E.~Saada}.
\newblock Dynamical fluctuations at the critical point: convergence to a
  nonlinear stochastic {PDE}.
\newblock \emph{Teor. Veroyatnost. i Primenen.} \textbf{38}, no.~4, (1993),
  689--741.

\bibitem[CC18]{phi43_CC}
\textsc{R.~Catellier} and \textsc{K.~Chouk}.
\newblock Paracontrolled distributions and the 3-dimensional stochastic
  quantization equation.
\newblock \emph{Ann. Probab.} \textbf{46}, no.~5, (2018), 2621--2679.
\newblock \burlalt{arXiv:1310.6869}{http://arxiv.org/abs/1310.6869}.

\bibitem[CH16]{rs_analytic}
\textsc{A.~Chandra} and \textsc{M.~Hairer}.
\newblock An analytic {BPHZ} theorem for regularity structures.
\newblock \emph{ArXiv e-prints} (2016).
\newblock \burlalt{arXiv:1612.08138}{http://arxiv.org/abs/1612.08138}.

\bibitem[CMW19]{Phi4_sub_local_bounds}
\textsc{A.~Chandra}, \textsc{A.~Moinat}, and \textsc{H.~Weber}.
\newblock A priori bounds for the $\phi^4$ equation in the full sub-critical
  regime.
\newblock \emph{ArXiv e-prints} (2019).
\newblock \burlalt{arXiv:1910.13854}{http://arxiv.org/abs/1910.13854}.

\bibitem[DPD03]{DPD03}
\textsc{G.~Da~Prato} and \textsc{A.~Debussche}.
\newblock Strong solutions to the stochastic quantization equations.
\newblock \emph{Ann. Probab.} \textbf{31}, no.~4, (2003), 1900--1916.

\bibitem[EH19]{general_discrete}
\textsc{D.~Erhard} and \textsc{M.~Hairer}.
\newblock Discretisation of regularity structures.
\newblock \emph{Ann. Inst. Henri Poincar\'e Probab. Stat.} \textbf{55}, no.~4,
  (2019), 2209--2248.
\newblock \burlalt{arXiv:1705.02836}{http://arxiv.org/abs/1705.02836}.

\bibitem[EO71]{EO}
\textsc{J.-P. Eckmann} and \textsc{K.~Osterwalder}.
\newblock On the uniqueness of the hamiltonian and of the representation of the
  {CCR} for the quartic {B}oson interaction in three dimensions.
\newblock \emph{Helv. Phys. Acta} \textbf{44}, no.~7, (1971), 884--909.

\bibitem[Fel74]{Feldman}
\textsc{J.~Feldman}.
\newblock The $\lambda$ $\varphi^4_3$ field theory in a finite volume.
\newblock \emph{Comm. Math. Phys.} \textbf{37}, no.~2, (1974), 93--120.

\bibitem[FG19]{Phi4_general}
\textsc{M.~Furlan} and \textsc{M.~Gubinelli}.
\newblock Weak universality for a class of 3{D} stochastic reaction-diffusion
  models.
\newblock \emph{Probab. Theory Relat. Fields} \textbf{173}, no. 3-4, (2019),
  1099–1164.
\newblock \burlalt{arXiv:1708.03118}{http://arxiv.org/abs/1708.03118}.

\bibitem[FO76]{FO}
\textsc{J.~Feldman} and \textsc{K.~Osterwalder}.
\newblock The wightman axioms and the mass gap for weakly coupled
  $(\phi^4)_{3}$ quantum field theories.
\newblock \emph{Ann. Phys.} \textbf{97}, no.~1, (1976), 80--135.

\bibitem[GIP15]{para_control_theory}
\textsc{M.~Gubinelli}, \textsc{P.~Imkeller}, and \textsc{N.~Perkowski}.
\newblock Paracontrolled distributions and singular {PDE}s.
\newblock \emph{Forum Math. Pi} \textbf{3}, (2015), e6, 75pp.
\newblock \burlalt{arXiv:1210.2684v3}{http://arxiv.org/abs/1210.2684v3}.
\newblock
  \burlalt{doi:10.1017/fmp.2015.2}{http://dx.doi.org/10.1017/fmp.2015.2}.

\bibitem[GJ73]{GJ}
\textsc{J.~Glimm} and \textsc{A.~Jaffe}.
\newblock Positivity of the $\phi^4_3$ hamiltonian.
\newblock \emph{Fortschr. Physik.} \textbf{21}, (1973), 327--376.

\bibitem[Gli68]{Glimm}
\textsc{J.~Glimm}.
\newblock Boson fields with the ${\Wick{\Phi^4}}$ interaction in three
  dimensions.
\newblock \emph{Comm. Math. Phys.} \textbf{10}, no.~1, (1968), 1--47.

\bibitem[GLP99]{GLP99}
\textsc{G.~Giacomin}, \textsc{J.~L. Lebowitz}, and \textsc{E.~Presutti}.
\newblock Deterministic and stochastic hydrodynamic equations arising from
  simple microscopic model systems.
\newblock In \emph{Stochastic partial differential equations: six
  perspectives}, vol.~64 of \emph{Math. Surveys Monogr.},  107--152. Amer.
  Math. Soc., Providence, RI, 1999.

\bibitem[GP16]{HQ_stationary}
\textsc{M.~Gubinelli} and \textsc{N.~Perkowski}.
\newblock The {H}airer-{Q}uastel universality result at stationarity.
\newblock In \emph{Stochastic analysis on large scale interacting systems},
  RIMS K\^oky\^uroku Bessatsu, B59,  101--115. Res. Inst. Math. Sci. (RIMS),
  Kyoto, 2016.

\bibitem[Hai12]{Singular_perturb_SHE}
\textsc{M.~Hairer}.
\newblock Singular perturbations to semilinear stochastic heat equations.
\newblock \emph{Probab. Theory Relat. Fields} \textbf{152}, no. 1-2, (2012),
  265--297.
\newblock \burlalt{arXiv:1002.3722}{http://arxiv.org/abs/1002.3722}.

\bibitem[Hai14]{rs_theory}
\textsc{M.~Hairer}.
\newblock A theory of regularity structures.
\newblock \emph{Invent. Math.} \textbf{198}, no.~2, (2014), 269--504.
\newblock \burlalt{arXiv:1303.5113}{http://arxiv.org/abs/1303.5113}.
\newblock
  \burlalt{doi:10.1007/s00222-014-0505-4}{http://dx.doi.org/10.1007/s00222-014-0505-4}.

\bibitem[HQ18]{HQ}
\textsc{M.~Hairer} and \textsc{J.~Quastel}.
\newblock A class of growth models rescaling to {KPZ}.
\newblock \emph{Forum Math. Pi} \textbf{6}, no.~3(2018).
\newblock \burlalt{arXiv:1512.07845}{http://arxiv.org/abs/1512.07845}.

\bibitem[HS17]{KPZCLT}
\textsc{M.~Hairer} and \textsc{H.~Shen}.
\newblock A central limit theorem for the {KPZ} equation.
\newblock \emph{Ann. Probab.} \textbf{45}, no.~6B, (2017), 4167--4221.
\newblock \burlalt{arXiv:1507.01237}{http://arxiv.org/abs/1507.01237}.
\newblock
  \burlalt{doi:10.1214/16-AOP1162}{http://dx.doi.org/10.1214/16-AOP1162}.

\bibitem[HX18]{Phi4_poly}
\textsc{M.~Hairer} and \textsc{W.~Xu}.
\newblock Large scale behaviour of three-dimensional continuous phase
  coexistence models.
\newblock \emph{Comm. Pure Appl. Math.} \textbf{71}, no.~4, (2018), 688--746.
\newblock \burlalt{arXiv:1601.05138}{http://arxiv.org/abs/1601.05138}.

\bibitem[HX19]{KPZ_general}
\textsc{M.~Hairer} and \textsc{W.~Xu}.
\newblock Large scale limit of interface fluctuation models.
\newblock \emph{Ann. Probab.} \textbf{47}, no.~6, (2019), 3478--3550.
\newblock \burlalt{arXiv:1802.08192}{http://arxiv.org/abs/1802.08192}.

\bibitem[Kup15]{phi43_Antti}
\textsc{A.~Kupiainen}.
\newblock Renormalization group and stochastic {PDE}s.
\newblock \emph{Annales Henri Poincar\'e}  1--39.
\newblock \burlalt{arXiv:1410.3094}{http://arxiv.org/abs/1410.3094}.
\newblock
  \burlalt{doi:10.1007/s00023-015-0408-y}{http://dx.doi.org/10.1007/s00023-015-0408-y}.

\bibitem[MW17a]{Ising2d}
\textsc{J.-C. Mourrat} and \textsc{H.~Weber}.
\newblock Convergence of the two-dimensional dynamic {I}sing-{K}ac model to
  $\phi^4_2$.
\newblock \emph{Comm. Pure Appl. Math.} \textbf{70}, no.~4(2017).

\bibitem[MW17b]{Phi43global}
\textsc{J.-C. Mourrat} and \textsc{H.~Weber}.
\newblock The dynamical {$\Phi^4_3$} model comes down from infinity.
\newblock \emph{Commun. Math. Phys.} \textbf{356}, no.~3, (2017), 673--753.
\newblock \burlalt{arXiv:1601.01234}{http://arxiv.org/abs/1601.01234}.

\bibitem[MW17c]{Phi42global}
\textsc{J.-C. Mourrat} and \textsc{H.~Weber}.
\newblock Global well-posedness of the dynamic ${\Phi^4}$ model in the plane.
\newblock \emph{Ann. Probab.} \textbf{45}, no.~4, (2017), 2398--2476.
\newblock \burlalt{arXiv:1501.06191}{http://arxiv.org/abs/1501.06191}.

\bibitem[MW18]{Phi43_local_bounds}
\textsc{A.~Moinat} and \textsc{H.~Weber}.
\newblock Space-time localisation for the dynamic $\varphi^4_3$ model.
\newblock \emph{ArXiv e-prints} (2018).
\newblock \burlalt{arXiv:1811.05764}{http://arxiv.org/abs/1811.05764}.

\bibitem[MWX17]{Phi43_pedestrians}
\textsc{J.-C. Mourrat}, \textsc{H.~Weber}, and \textsc{W.~Xu}.
\newblock Construction of $\phi^4_3$ diagrams for pedestrians.
\newblock In \emph{From particle systems to partial differential equations},
  vol. 209 of \emph{Springer Proc. Math. Stat.},  1--46. Springer, 2017.

\bibitem[Nua06]{Nua06}
\textsc{D.~Nualart}.
\newblock \emph{The {M}alliavin {C}alculus and {R}elated {T}opics}.
\newblock Probability and its Applications (New York). Springer-Verlag, Berlin,
  second ed., 2006.

\bibitem[SX18]{Phi4_non_Gaussian}
\textsc{H.~Shen} and \textsc{W.~Xu}.
\newblock Weak universality of dynamical $\phi^4_3$: non-{G}aussian noise.
\newblock \emph{Stoch.Partial Dffer. Equ. Anal. Comput.} \textbf{6}, no.~2,
  (2018), 211--254.
\newblock \burlalt{arXiv:1601.05724}{http://arxiv.org/abs/1601.05724}.

\bibitem[ZZ18]{Phi4_general_whole}
\textsc{R.~Zhu} and \textsc{X.~Zhu}.
\newblock Weak universality of the dynamical {$\Phi^4_3$} model on the whole
  space.
\newblock \emph{ArXiv e-prints} (2018).
\newblock \burlalt{arXiv:1811.01367}{http://arxiv.org/abs/1811.01367}.

\end{thebibliography}

\end{document}